\newtheorem{theorem}{Theorem}[section]
\newtheorem{corollary}{Corollary}
\newtheorem{lemma}[theorem]{Lemma}
\newtheorem{proposition}{Proposition}
\theoremstyle{definition}
\newtheorem{definition}[theorem]{Definition}
\newtheorem{remark}{Remark}
\newcommand{\N}{\mathbb{N}}
\newcommand{\Z}{\mathbb{Z}}
\newcommand{\Q}{\mathbb{Q}}
\newcommand{\R}{\mathbb{R}}
\newcommand{\C}{\mathbb{C}}
\renewcommand{\H}{\mathbb{H}}
\newcommand{\mrm}{\mathrm}
\newcommand{\mc}{\mathcal}
\renewcommand{\a}{\alpha}
\renewcommand{\b}{\beta}
\newcommand{\g}{\gamma}
\newcommand{\G}{\Gamma}
\renewcommand{\d}{\delta}
\newcommand{\e}{\varepsilon}
\renewcommand{\l}{\lambda}
\renewcommand{\L}{\Lambda}
\newcommand{\s}{\sigma}
\newcommand{\vp}{\varphi}
\renewcommand{\t}{\tau}
\renewcommand{\th}{\theta}
\newcommand{\set}[1]{\left\{#1\right\}}
\renewcommand{\r}{\rightarrow}
\def\multiset#1#2{\ensuremath{\left(\kern-.3em\left(\genfrac{}{}{0pt}{}{#1}{#2}\right)\kern-.3em\right)}}
\title[Recurrence and Rigidity]{Geodesic Planes in Geometrically Finite Manifolds}
\subjclass{22F30, 37A17, and 51M10}
\keywords{geodesic planes, geometrically finite manifolds, unipotent flows}
\author[Osama Khalil]{Osama Khalil}
\address{Department of Mathematics, Ohio State University, Columbus, OH}
\email{khalil.37@osu.edu}
\begin{document}

\begin{abstract}
We study the problem of rigidity of closures of totally geodesic plane immersions in geometrically finite manifolds containing rank $1$ cusps. We show that the key notion of $K$-thick recurrence of horocycles fails generically in this setting. This property played a key role in the recent breakthroughs of McMullen, Mohammadi and Oh. Nonetheless, in the setting of geometrically finite groups whose limit sets are circle packings, we derive $2$ density criteria for non-closed geodesic plane immersions, and show that closed immersions give rise to surfaces with finitely generated fundamental groups. We also obtain results on the existence and isolation of proper closed immersions of elementary surfaces.
\end{abstract}

\maketitle

\section{Introduction}

\subsection{Formulation of Results}
	
	Let $M$ be a hyperbolic manifold of dimension $3$. Let
    $f: \H^2 \rightarrow M$
    be a totally geodesic immersion of the hyperbolic plane.
    Ratner~\cite{Ratner1991} and Shah~\cite{Shah} independently classified the possibilities for the closures of $f(\H^2)$ inside $M$, in the case when $M$ has finite volume.
    Recently, in~\cite{MMO-Planes}, a complete classification was obtained in the case when $M$ is a convex cocompact manifold whose convex core has a totally geodesic boundary
    \footnote{In a more recent preprint, the authors also obtained a nearly full classification in the case of  general acylindrical convex cocompact manifolds.}.
    This class of manifolds will be referred to as rigid acylidrical manifolds.
    In both cases, it is shown that such closures are always immersed submanifolds of $M$.
    See also the related results in~\cite{MMO-horocycles,MaucourantSchapira} on the topological dynamics of horocycles in $3$ manifolds.

    In this article, we study the rigidity problem of closures of geodesic plane immersions in geometrically finite manifolds containing rank-$1$ cusps.
    We begin by showing that a certain desirable recurrence property of unipotent orbits ($K$-thickness) fails generically in this setting, Theorem~\ref{almost every U orbit is non-recurrent}.
    This property plays a crucial role in~\cite{MMO-Planes}.
    We then formulate sufficient conditions for certain geodesic plane immersions to be dense, Theorems~\ref{K thick implies plane denseness} and~\ref{accumulating on a horocycle implies dense}.
    In addition, when the limit set associated with our kleinian manifold $M$ is a circle packing, we show that a closed geodesic plane immersion gives rise to a geometrically finite surface (i.e. having a finitely generated fundamental group) and further characterize the limit set of such a surface, Theorem~\ref{limit set of a closed orbit}.
    Finally, we establish the existence of properly immersed elementary surfaces (Theorem~\ref{finite intersection implies closed}) and study the situation when a compact set meets infinitely many of them (Proposition~\ref{finiteness of orbits in each degree}).

\subsection{Preliminary Notions}  
    To state our results precisely, we recall some necessary notions and refer the reader to \S~\ref{section: prelims} for detailed definitions.
    
    The frame bundle of $\H^3$, denoted by $\mrm{F}\H^3$, consists of the set of orthonormal $3$-frames at every point in $\H^3$.
    The group $G=\mrm{Isom}^+(\H^3)\cong \mrm{PSL}_2(\C)$ acts simply transitively on $\mrm{F}\H^3$ and the two can thus be identified.
    The following $1$-parameter subgroups of $G$ will be important to us.
    \begin{align*}
    	U = \set{u_t =\begin{pmatrix} 1 & t\\ 0 & 1 \end{pmatrix}: t\in \R},
        A = \set{a_t=\begin{pmatrix} e^{t/2} & 0\\ 0 & e^{-t/2} \end{pmatrix}: t\in \R}.
    \end{align*}
    The action of the group $A$ induces the frame flow and generates the parametrized geodesics on the unit tangent bundle $\mrm{T}^1\H^3$.
    Orbits of the group $U$ project to horocycles in $\H^3$.
    
    Let $\G$ be a Kleinian group (a discrete subgroup of $G$) so that $M$ is identified with $\G\backslash \H^3$.
    The limit set $\L$ of $\G$ is defined to be
    \[ \L = \overline{\G \cdot o} \cap \partial \H^3, \]
    where $o\in \H^3$ is any point and $\partial \H^3$ is the boundary at infinity.
    In the ball model of $\H^3$, we can identify $\partial \H^3$ with $\mathbb{S}^2$.
    The non-wandering set for the $A$ action on $\G \backslash \mrm{F} \H^3$ consists of those frames $(v_1,v_2,v_3)$
    for which any lift of the geodesic tangent to $v_1$ to $\H^3$ joins two points on the boundary belonging to $\L$.
    Keeping the notation of~\cite{MMO-Planes}, we denote this set by $\mrm{RFM}$.
    For a frame $x\in \mrm{F}\H^3$, we will use $x^+$ (resp. $x^-$) to denote the forward (resp. backward) endpoint
    in $\partial \H^3$ of the geodesic tangent to its first vector.
    
    By lifting a geodesic immersion of $\H^2$ to the universal cover, we obtain a totally geodesic hyperplane inside $\H^3$.
    The closure of any such geodesic hyperplane inside $\H^3 \cup \partial \H^3$ meets the boundary in a Euclidean circle.
    We will denote the space of circles in $\partial \H^3$ by $\mc{C}$ and the subset of $\mc{C}$ which meets $\L$ by $\mc{C}_\L$.
    
    The group $G$ acts transitively on $\mc{C}$ and the stabilizer of a point is a conjugate of the subgroup
    $H = \mrm{PGL}_2(\R)$.
    Hence, we can identify $\mc{C}$ with $G/H$.
    Orbits of the right action of $H$ on $\mrm{F}\H^3$ project to all geodesic hyperplanes in $\H^3$.
    Thus, understanding the orbit closures of $H$ on $\G\backslash \mrm{F}\H^3$ is a finer question than the closures of totally geodesic immersions on $\G \backslash \H^3$.
    
    The action of $\G$ on $\H^3$ extends to a conformal action on $\partial \H^3$.
    In particular, circles are mapped to circles, extending this action to an action on $\mc{C}$ which agrees with the action of $\G$ on $G/H$ by left multiplication.
    Orbit closures of $\G$ acting on $\mc{C} = G/H$ are in one-to-one correspondence with orbit closures of $H$ on $\G\backslash G$.
    Following~\cite{MMO-Planes}, we shall adopt this point of view throughout the paper.

\subsection{Dynamics of Horocycles}
    The key technique used in the classification problem in finite volume dates back to Margulis' resolution of the Oppenheim conjecture~\cite{MargulisOppenheim}.
    It relies on the fact that unipotent trajectories spend a positive proportion of their time in big compact sets.
    This fails in infinite volume but a suitable substitute was introduced in~\cite{MMO-Planes}.
	A set $T\subseteq \R$ is said to be $\mathbf{K}$-\textbf{thick} for some $K>1$ if for all $t>0$, 
	\[ T \cap \left([-Kt,-t] \cup [t,Kt] \right) \neq \emptyset. \]
	    
    In the context of rigid acylindrical manifolds, it was shown in~\cite{MMO-Planes} that the set of return times to $\mrm{RFM}$, of the unipotent orbit of a frame $x\in \mrm{RFM}$ is $K$-thick, for some $K>1$ which is independent of $x$.
    This key property was shown to be sufficient to carry out the classical polynomial shearing arguments of unipotent trajectories which drive transverse smoothness of the closures of totally geodesic planes.

    Our first result says that the presence of rank-$1$ parabolics prohibits even such weak notion of recurrence.
    The following is the precise statement.
	\begin{theorem} \label{almost every U orbit is non-recurrent}
		Assume $\G$ is a geometrically finite Kleinian group containing rank-$1$ parabolic subgroups. Suppose $x\in \mrm{RFM}$ is such that $\overline{xA} = \mrm{RFM}$. Then, the set of return times
        \[ R(x) = \set{t\in \R: xu_t \in \mrm{RFM}} \]
        is not $K$-thick for any $K>1$.
	\end{theorem}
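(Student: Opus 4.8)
The plan is to exploit two facts: that return sets along a geodesic are dilates of one another, and that near a rank-$1$ cusp the limit set is trapped in a strip, so that horocycles aimed into the cusp have return sets with enormous multiplicative gaps; density of $xA$ then forces the same gaps onto $R(x)$. First I would record the identity $a_s u_t = u_{e^s t}\, a_s$, which together with the $A$-invariance of $\mrm{RFM}$ gives
\[ R(xa_s) = e^{-s} R(x) \qquad (s \in \R). \]
Since the property of being $K$-thick is invariant under the dilation $t \mapsto ct$ ($c>0$), the thickness of $R(x)$ is unchanged along the $A$-orbit, and the assertion ``$R(x)$ is not $K$-thick for any $K$'' is equivalent to saying that $R(x)$ has multiplicative gaps of unbounded ratio: for every $K$ there is $t'>0$ with $R(x) \cap \left([-Kt',-t'] \cup [t',Kt']\right) = \emptyset$. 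It therefore suffices to produce such a gap in some $R(xa_s)$ and transport it to $R(x)$ via the dilation above.

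Next I would build a model frame with manifestly gappy return times. Conjugating so that a rank-$1$ parabolic fixed point sits at $\infty$ with stabiliser generated by $z \mapsto z+1$, geometric finiteness of $\G$ makes $\infty$ a bounded parabolic point, whence $\L \setminus \set{\infty}$ is contained in a horizontal strip $\set{|\mathrm{Im}\, z| \le C}$ and is invariant under $z \mapsto z+1$. Fix $\xi \in \L$ in this strip and let $y$ be a frame with $y^+ = \infty$ and $y^- = \xi$ chosen so that the backward endpoint of its $U$-orbit traces a line transverse to the strip, say the vertical line $t \mapsto \xi + it$. Because this line leaves the strip once $|t|$ exceeds a bounded threshold, and can only return toward $\L$ by approaching the single point $\infty \in \L$, the set $R(y) = \set{t : \xi + it \in \L}$ is bounded; more quantitatively the line stays at distance $\gtrsim 1/|t|$ from $\L$ in the spherical metric. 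Hence the margin-$\e$ gap $\set{t : \mathrm{dist}(\xi + it, \L) \ge \e}$ contains an annular window $t_1 \le |t| \le t_2$ with $t_1$ bounded (the strip-exit radius) and $t_2 \asymp 1/\e$, so its ratio $t_2/t_1 \to \infty$ as $\e \to 0$.

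Finally I would transport this window to $x$. Since $y^{\pm} \in \L$ we have $y \in \mrm{RFM} = \overline{xA}$, so for every $\d>0$ there is $s$ with $xa_s$ within $\d$ of $y$. As the map $(z,t) \mapsto (zu_t)^-$ is continuous in the frame $z$, uniformly on compact $t$-intervals, choosing $\d$ small (depending on $\e$) keeps the backward-endpoint curve of $xa_s$ within $\e/2$ of the line $\xi + it$ over the compact window, so it too avoids $\L$ there; thus $R(xa_s) \cap \set{t_1 \le |t| \le t_2} = \emptyset$, and the dilation identity yields the gap $\set{e^s t_1 \le |t| \le e^s t_2}$ in $R(x)$, of the same ratio $t_2/t_1$. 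Letting $\e \to 0$ produces gaps of unbounded ratio, so $R(x)$ is not $K$-thick for any $K$.

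I expect the main obstacle to be the quantitative estimate near the cusp. Since $\infty \in \L$, the vertical line re-approaches $\L$ as $|t| \to \infty$, and one must verify in the round metric on $\partial\H^3$ that the two competing distances, to $\infty$ and to the far $\Z$-translates of $\L$ lying in the strip, are both $\gtrsim 1/|t|$; this is exactly what allows the margin window to stretch out to $|t| \asymp 1/\e$ and forces the ratio to blow up. The only other point needing care is the extraction of the strip confinement of $\L$ from the geometric finiteness of $\G$ at a rank-$1$ cusp.
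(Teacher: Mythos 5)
Your proof is correct, and it takes a genuinely different route from the paper's. The paper never invokes the renormalization identity $R(xa_s)=e^{-s}R(x)$; instead it uses the density of $xA$ to place translates $\g_n x a_n$ in shrinking neighborhoods of a \emph{sequence} of model frames $y_n$, tangent to circles $C_n$ of radius $n/2$ through two limit points $\l_0,\l_0+n$ in the strip, and then runs two technical lemmas: the angles lemma (Lemma~\ref{angles lemma}), showing the geodesics over the chords cut out by the double horocycle become hyperbolically close, and the symmetrization lemma (Lemma~\ref{symmetrization lemma}), which converts that configuration into two-sided gaps of unbounded ratio in $R(x)$, identified with $C(x)\cap\L$ in coordinates where the fixed endpoint of the $U$-orbit is $\infty$. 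You replace all of this by (i) the dilation identity plus scale-invariance of $K$-thickness, (ii) a \emph{single} limiting model frame $y$ whose circle is the vertical line through the cusp $\infty$ and a point $\xi\in\L$, so that the strip confinement of $\L\setminus\set{\infty}$ (the same double-horocycle fact the paper relies on, valid since a rank-$1$ parabolic point of a geometrically finite group is bounded) makes $R(y)$ bounded with automatic two-sided gaps, and (iii) a soft transport of gaps via uniform continuity of $(z,t)\mapsto (zu_t)^{\pm}$ on compact $t$-windows, using $\G$-invariance of $\L$ to pass between lifts. Two small remarks: with the paper's conventions $U$ fixes the backward endpoint and moves the forward one ($(xu_t)^+=t$ when $x^-=\infty$), so your model frame should be oriented as $y^-=\infty$, $y^+=\xi$; this is immaterial. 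More substantively, the quantitative $1/|t|$ estimate you flag as the main obstacle is not actually needed: for a fixed window $\set{t_1\le |t|\le T}$ the corresponding segment of the vertical line is a compact set disjoint from the closed set $\L$, hence at positive spherical distance from it, which is all the continuity argument requires; letting $T\to\infty$ with $t_1$ fixed already produces gaps of unbounded ratio. As for what each approach buys: yours is shorter and more elementary, avoiding the hyperbolic distance estimates entirely; the paper's heavier machinery is not wasted, though, because Lemmas~\ref{angles lemma} and~\ref{symmetrization lemma} are reused verbatim in the proof of Theorem~\ref{K thick implies plane denseness}, and working directly inside $C(x)\cap\L$ exhibits explicitly where the gaps of $R(x)$ sit, namely in the $\g_n^{-1}$-images of the complement of the strip.
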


    \subsection{Density Criteria}
    The lack of recurrence of unipotent orbits significantly limits the applicability of many classical techniques to the rigidity problem.
    Thus, we restrict our attention to geometrically finite manifolds whose limit sets on the boundary of $\H^3$ form a circle packing in order to leverage their geometric structure in our study.
    Recall that the limit set is said to be a \textbf{circle packing} if it is the complement of countably many open round disks in $\partial \H^3$.

    An important example of circle packings which arise as limit sets of geometrically finite Kleinian groups is Apollonian circle packings, see figure~\ref{fig:apollonian}.
    In these examples, stabilizers of tangency points between circles in the packing give rise to rank $1$ cusps.
    Geometric and number theoretic properties of (Apollonian) circle packings have been studied extensively in the literature. See~\cite{OhShah,GrahamLagarias} for example.
    Beyond the Apollonian group, it is shown in~\cite{Keen93geometricfiniteness} that every isomorphism class of geometrically finite groups contains one whose limit set is a circle packing.
    
    In this setting, we prove two criteria for density of totally geodesic immersions which we now discuss.
	Our first rigidity criterion demonstrates that $K$-thick recurrence of certain unipotent orbits can in fact be sufficient for the geodesic plane containing these horocycles to be dense. 
    
    We say that a point $\a\in\L$ is \textbf{rank-}$\mathbf{1}$ \textbf{unbounded} if there exists a rank-$1$ parabolic fixed point $\s \in \L$ such that for every horoball $H$ centered at $\s$ and every geodesic ray $\g:[0,\infty) \r \H^3$ ending at $\a$, the set
        \[ \set{t\geq 0: \g(t) \in \G H} \]
        is unbounded.
    
    \begin{theorem} \label{K thick implies plane denseness}
    	Assume $\G$ is a geometrically finite Kleinian group containing rank-$1$ parabolic subgroups.
        Assume further that the limit set of $\G$ is a circle packing.
        Let $x \in \G\backslash \H^3$ be such that $x^- \in \L $ but $x^- \notin \partial B$ for any connected component $B\subset \partial \H^3\backslash \Lambda $.
        Suppose that $x^-$ is rank-$1$ unbounded and
        the set
            \[ R(x) = \set{t\in \R: xu_t \in \mrm{RFM}} \]
        is $K$-thick for some $K>1$. Then, the geodesic plane tangent to $x$ is dense in $M$.
        \end{theorem}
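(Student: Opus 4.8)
The plan is to prove the stronger statement that $Y := \overline{xH}$ is all of $\G\backslash G$, where $G=\mrm{PSL}_2(\C)$ and $H=\mrm{PGL}_2(\R)$; projecting to $M=\G\backslash\H^3$ then yields density of the plane. Write $\mf g=\mf{sl}_2(\C)$ (as a real Lie algebra) and $\mf h=\mrm{Lie}(H)=\mf{sl}_2(\R)$, so that $\mf g=\mf h\oplus i\mf h$. Since $Y$ is automatically right $H$-invariant, the first (purely algebraic) reduction is: it suffices to find a nonzero $w\in\mf g\setminus\mf h$ such that $Y$ is invariant under the one-parameter subgroup $\exp(\R w)$. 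Indeed, $\mrm{Ad}(H)$ preserves $i\mf h$ and acts there as the adjoint representation of $H$, which is irreducible over $\R$; hence the Lie subalgebra generated by $\mf h$ and $w$ contains $\mf h$ together with the nonzero $i\mf h$-component of $w$, and therefore all of $i\mf h$, so it equals $\mf g$. Consequently the closed subgroup generated by $H$ and $\exp(\R w)$ is $G$, and $G$-invariance of the closed set $Y$ forces $Y=\G\backslash G$ by transitivity of the right $G$-action.

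To produce $w$, I would first convert the $K$-thickness of $R(x)$ into usable recurrence. By hypothesis there is a sequence with $|t_n|\to\infty$ and $xu_{t_n}\in\mrm{RFM}$, and since $U\subset H$ these points lie in $Y\cap\mrm{RFM}$. Because $U$ is the stable horocyclic subgroup (one has $a_{-s}u_t a_s=u_{e^{-s}t}$), rescaling the returns to unit size is accomplished by the backward geodesic flow, which simultaneously drives the basepoint toward $x^-$. I would use the rank-$1$ unboundedness of $x^-$ precisely to control this renormalization: it guarantees that the backward ray repeatedly enters $\G$-translates of a horoball at the rank-$1$ parabolic point $\s$, so that the rescaled returns can be taken at a bounded, positive depth inside such a horoball, yielding a limiting frame $y\in Y$ whose backward endpoint is a $\G$-image of $\s$. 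The standard polynomial divergence (shearing) argument applied to the self-returns of the horocycle at $y$ then produces elements $\g_n\in\G$ approximating the $U$-direction, whose renormalized discrepancies converge, along a subsequence, to a nonzero $w\in\mf g$ with $Y\exp(\R w)=Y$.

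The crux, and the step I expect to be the main obstacle, is to guarantee $w\notin\mf h$. Since $\mrm{RFM}$ is noncompact and $K$-thickness is far weaker than the uniform recurrence available in the rigid acylindrical setting of~\cite{MMO-Planes}, a priori the returns could be organized so that $\g_n$ lies in a conjugate of $H$, giving only $w\in\mf h$ and no new invariance. This is where the circle-packing hypothesis and the condition that $x^-$ lie on no gap boundary enter. Near $\s$ the packing is rigid: it is periodic under the rank-$1$ parabolic generator (which itself lies in $H$) and confined to a thin strip transverse to the cusp, with the complementary round disks accumulating tangentially along it. The assumption that $x^-\notin\partial B$ for every component $B$ of $\partial\H^3\setminus\L$ ensures that the rescaled horocycle genuinely crosses infinitely many distinct gaps near the cusp instead of shadowing a single plane bounding one of them; quantifying this transverse spreading is what forces the surviving direction $w$ to have a nonzero $i\mf h$-component. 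Carrying this out rigorously — extracting a transverse limit from the fine geometry of the packing at a rank-$1$ cusp, given that the parabolic producing the recurrence lies inside $H$ — is the heart of the matter. Once it is established, the algebraic reduction of the first paragraph immediately finishes the proof.
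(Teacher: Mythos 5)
Your proposal founders at the very first step: the ``stronger statement'' $\overline{xH} = \G\backslash G$ is false in this setting. Since $\L$ is closed and $\G$-invariant, every translate $\g C$ of the circle $C$ attached to $x$ meets $\L$, so $\overline{\G C}$ is contained in the set $\mc{C}_\L$ of circles meeting $\L$, which is a closed \emph{proper} subset of $\mc{C}$ (the domain of discontinuity is nonempty because $\L$ is a circle packing). Equivalently, $\overline{xH}$ lies inside the proper closed subset of frames whose associated circle meets $\L$; the most one can hope for, and all the theorem needs, is $\overline{\G C} = \mc{C}_\L$. This error is not cosmetic: it destroys your algebraic reduction. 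If $Y=\overline{xH}$ were invariant under some $\exp(\R w)$ with $w \notin \mf h$, then by your own argument $Y$ would be $G$-invariant, hence equal to $\G\backslash G$ --- a contradiction; so no such $w$ can exist, and the shearing scheme you outline cannot produce one. In infinite volume, polynomial divergence yields at best fragments of additional orbits inside $Y$, never global invariance under a new one-parameter group, which is precisely why the analysis in~\cite{MMO-Planes} is so delicate. There is a second gap of the same nature: shearing requires recurrence at the renormalized limit frames $y\in Y$, whereas the hypothesis gives $K$-thickness only of $R(x)$, at the single frame $x$; by Theorem~\ref{almost every U orbit is non-recurrent}, such recurrence is generically unavailable at other points of $\mrm{RFM}$, so it cannot be borrowed for free.

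The paper's proof goes a completely different and essentially elementary route, in which $K$-thickness is used only as a contradiction device, never constructively. Rank-$1$ unboundedness of $x^-$ produces $\g_n \in \G$ pushing $C$ arbitrarily deep into a rank-$1$ cusp $\s$; the circle-packing hypothesis provides two packing circles tangent at $\s$, which in upper half-space coordinates with $\s=\infty$ become parallel lines $L_1,L_2$ confining $\L-\set{\s}$ to a strip. Then either (Case 1) the circles $\g_n C$ become asymptotically parallel to $L_1$ (angle tending to $0$, or missing $L_2$), in which case translating by the parabolic stabilizer $\G_\s$ makes them accumulate on the packing circle $\partial B_1$, and Theorem~\ref{accumulating on the boundary of a component} (Theorem 5.1 of~\cite{MMO-Planes}) yields $\overline{\G C}=\mc{C}_\L$; or (Case 2) the $\g_n C$ cross both lines at angles bounded away from $0$, and then Lemmas~\ref{angles lemma} and~\ref{symmetrization lemma} show $R(x)$ is not $K$-thick for any $K>1$, contradicting the hypothesis. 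Unipotent shearing plays no role anywhere in the argument.
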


    Note that the condition on $x^-$ in the above Theorem is well-defined since $\L$ is $\G$ invariant.
    
    \begin{remark}
    We remark that, in the setting of Theorem~\ref{K thick implies plane denseness}, the limit set $\L$ is infinite.
    In this case, a result of Eberlein~\cite[Theorem 3.11]{Eberlein} states that the geodesic flow on $\mrm{T}^1(\Gamma\backslash\H^3)$, restricted to the non-wandering set, is topologically transitive.
    In particular, this implies that a dense $G_\d$ subset of the limit set is rank-1 unbounded.
    The notion of rank-1 unboundedness is also generic in the measurable sense (with respect to ergodic $\G$-invariant, fully supported conformal densities on $\L$) for similar reasons.
    \end{remark}
    
        \begin{figure}[htp]
		\centering
         \includegraphics[width=0.7\textwidth]{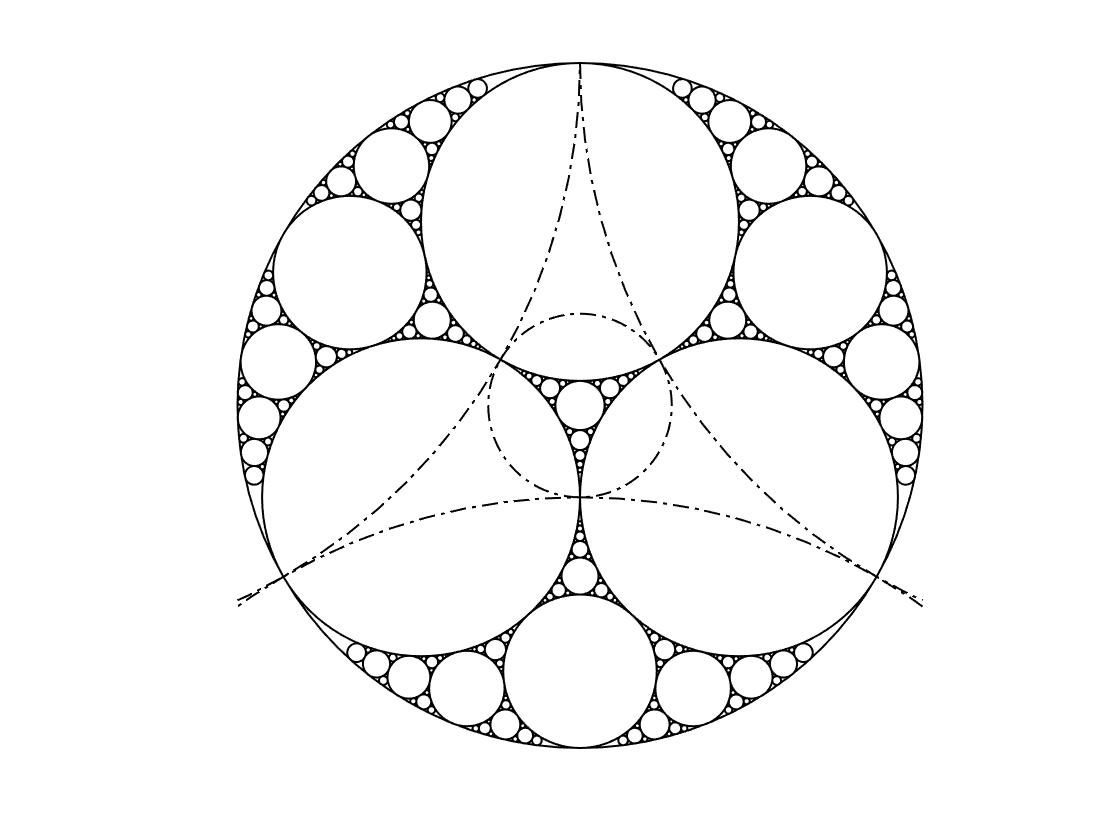}
         \caption{\label{fig:apollonian} Apollonian circle packing (solid). Inversions through dual circles (dashed) generate a geometrically finite group containing rank-$1$ parabolic subgroups.}
	\end{figure}
    
    Our second density criterion is stated in terms of orbits of circles in $\mc{C}_\L$ and does not rely on recurrence of horocycles.

	\begin{theorem} \label{accumulating on a horocycle implies dense}
    	Let $\G$ be a geometrically finite group whose limit set $\L$ is a circle packing.
		Let $C$ be a circle which meets $\L$ in at least $2$ points.
        Suppose that there exists a circle $D \in \overline{\G C}$ such that $D$ meets $\L$ in only one point. Then, $\overline{\G C} = \mc{C}_\L$.
	\end{theorem}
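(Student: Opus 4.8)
The plan is to work directly in the space of circles $\mc{C}$, exploiting that $\overline{\G C}$ is a closed $\G$-invariant set which is therefore invariant under every stabilizer subgroup, and to extract from the single degenerate circle $D$ a large family of circles by running the horocycle flow of one complementary disk. First I would pin down the geometry of $D$. Since $D\cap\L=\set{p}$ and $\L$ is the complement of the open round disks $\set{B_i}$, the connected arc $D\setminus\set{p}$ is disjoint from $\L$ and hence lies in a single disk $B:=B_{i_0}$; consequently $D$ is internally tangent to $\partial B$ at $p\in\partial B\subseteq\L$. In the hyperbolic metric on $B$ (whose ideal boundary is $\partial B$), a Euclidean circle internally tangent to $\partial B$ at one point is exactly a horocycle based at $p$, so $D$ is a horocycle of $B$. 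I would then record that $\G_B:=\mrm{Stab}_\G(B)$ is a lattice in $\mrm{Isom}^+(B)\cong\mrm{PSL}_2(\R)$: since $\partial B\subseteq\L$, no arc of $\partial B$ consists of points of the domain of discontinuity, so the limit set of $\G_B$ is all of $\partial B$, and geometric finiteness then forces $\G_B\backslash B$ to have finite area.

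The engine of the proof is Hedlund's theorem for the finite-area surface $\G_B\backslash B$. Because $\overline{\G C}$ is closed and $\G$-invariant it is $\G_B$-invariant, so $\overline{\G_B D}\subseteq\overline{\G C}$. Identifying the space of horocycles of $B$ with the $U'$-orbits on $\G_B\backslash\mrm{PSL}_2(\R)$, Dani--Hedlund gives the dichotomy that $\overline{\G_B D}$ is either a single periodic orbit (when $p$ is a parabolic fixed point of $\G_B$) or the entire space of horocycles of $B$. In the periodic case I would separately produce a horocycle based at a radial (non-parabolic) point of $\partial B$, extracting it from the approximating sequence $\gamma_n C\to D$ — whose pairs of intersection points with $\L$ collapse onto $p$ — by a renormalization of these collapsing geodesics toward a radial point; this reduces matters to the generic case. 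Granting the generic case, $\overline{\G C}$ contains every horocycle of $B$. Letting the horoball exhaust $B$, the corresponding Euclidean circles converge to $\partial B$, so the peripheral circle $\partial B$ itself lies in $\overline{\G C}$, and hence so does $\partial(\gamma B)$ for every $\gamma\in\G$.

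The final and most delicate step is to propagate from this family to all of $\mc{C}_\L$. Passing to $\G\backslash G$, the circle $\partial B$ corresponds to a \emph{closed} $H$-orbit (the totally geodesic surface attached to the Fuchsian lattice $\G_B$), which lies in $\mrm{RFM}$, while the one-parameter family of horocycles of $B$ degenerating onto $\partial B$ supplies, through this closed orbit, a transverse one-parameter family of circles inside $\overline{\G C}$. I would then feed this transverse family into the non-wandering dynamics, using the topological transitivity of the geodesic flow on $\mrm{RFM}$ (Eberlein's theorem, as in the Remark following Theorem~\ref{K thick implies plane denseness}), to spread it to a dense subset of $\mc{C}_\L$; combined with closedness and $\G$-invariance this would yield $\overline{\G C}=\mc{C}_\L$.

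The main obstacle I anticipate is precisely this propagation. The naive hope — that the transverse family furnishes genuine homogeneous invariance of $\overline{\G C}$ — cannot be right, since invariance under $H$ together with one transverse direction would generate all of $G$ and force $\overline{\G C}=\G\backslash G$, contradicting $\overline{\G C}\subseteq\mc{C}_\L$. This is exactly the failure of recurrence-driven rigidity flagged by Theorem~\ref{almost every U orbit is non-recurrent}. Thus one must work with mere containment of the transverse family rather than invariance, and the hard part will be to convert the "thin" objects produced so far (each horocycle is trapped inside a single disk, and $\G$-translates of horocycles remain horocycles of disks) into macroscopic transverse circles densely filling $\mc{C}_\L$, ruling out that $\overline{\G C}$ stalls in a proper closed $\G$-invariant subset before transitivity can be applied. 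I expect this to require a hands-on argument using the self-similar geometry of the circle packing rather than a soft homogeneous-dynamics input.
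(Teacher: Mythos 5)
Your first step coincides exactly with the paper's: $D\setminus\{p\}$ lies in a single component $B$ of $\Omega$, so $D$ is a horocycle of $B$ based at $p\in\partial B\subset\L$, and $\G^{B}$ acts on $B$ as a Fuchsian lattice. After that, however, your proposal has two genuine gaps. First, your Hedlund dichotomy leaves the periodic case unresolved, and this is not a marginal case: $p$ can perfectly well be a parabolic fixed point of $\G^{B}$ (e.g.\ a tangency point of the packing), and then $\overline{\G^{B}D}=\G^{B}D$ gives nothing. Your proposed fix --- renormalizing the approximating circles $\g_n C$ to produce a horocycle based at a \emph{radial} point --- aims at the wrong limit object: if you translate the chords $\g_n C\cap B$ (whose endpoints on $\partial B$ collapse to $p$) back into a fixed compact part of the convex core of $\G^{B}\backslash B$, the limit circle passes through two \emph{distinct} points of $\partial B$ and its arc in $B$ has constant geodesic curvature $1$ (inherited from the horocycle $D$, since isometries preserve curvature); an equidistant curve through two boundary points has curvature strictly less than $1$, so the limit is forced to be $\partial B$ itself, not a horocycle. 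This compactness-plus-curvature argument is precisely the paper's Lemma~\ref{inflate the balloon}, and it handles the parabolic and radial cases uniformly, so the Hedlund dichotomy (and its problematic periodic branch) can be avoided altogether.

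Second, and more seriously, the final step --- passing from $E=\partial B\in\overline{\G C}$ to $\overline{\G C}=\mc{C}_\L$ --- is exactly where you stop having a proof, and your pessimism about ``soft homogeneous-dynamics input'' is misplaced. The paper finishes by quoting Theorem~\ref{accumulating on the boundary of a component} (Theorem 5.1 of~\cite{MMO-Planes}): if $\d_n C\to E$ with $\G E$ closed, $\G^{E}$ non-elementary, and infinitely many $\d_n C$ meeting a disk bounded by $E$ that meets $\L$, then $\overline{\G C}=\mc{C}_\L$. All hypotheses are available to you: $\G^{E}$ is a lattice, hence non-elementary; $\G E$ is closed because distinct translates of $E$ bound components of $\Omega$ whose areas tend to $0$; and each $\d_n C$ meets $\L$ in at least two points, all lying outside $B$, which forces $\d_n C$ to enter the complementary disk $\mathbb{S}^2\setminus\overline{B}$, and that disk meets $\L$ since $\L$ is not contained in a circle. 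The reason this external input is legitimate despite Theorem~\ref{almost every U orbit is non-recurrent} is that the relevant unipotent recurrence in the proof of MMO's theorem takes place relative to the lattice $\G^{E}$ on the closed orbit attached to $E$, not relative to the ambient geometrically finite group $\G$; the failure of $K$-thickness you cite concerns the latter, not the former. Without invoking (or reproving) this isolation theorem, your argument cannot close, so as written the proposal is incomplete.
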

    We note that the $\G$ orbit of a circle is dense in $\mc{C}_\L$ if and only if the corresponding geodesic plane immersion is dense in $M$.

    \subsection{Rigidity of Closed Immersions}
    We now turn to the study of closed geodesic plane immersions.
    We show that in the setting of circle packing limit sets,
    every closed plane immersion gives rise to a geometrically finite surface.

    We further characterize the limit set of the fundamental group of such a surface.
    When $M$ is rigid acylindrical, such a limit set coincides with the intersection of the circle at $\infty$ with $\L$.
    The presence of rank $1$ parabolics, however, causes circles to meet $\L$ in non-perfect sets, while limit sets of non-elementary Fuchsian groups are known to be perfect sets. We prove that such limit sets coincide with the subset of non-isolated points of the intersection of the invariant circle with $\L$. The following is the precise statement.
    
    \begin{theorem} \label{limit set of a closed orbit}
    	Let $\G$ be a geometrically finite group whose limit set $\L$ is a circle packing.
		Let $C$ be a circle such that $\G C$ is closed in $\mc{C}$ and let $\G^C$ denote the stabilizer of $C$ in $\G$.
        Then, $\G^C$ is finitely generated and either $|\L(\G^C)| = |C\cap \L| =1 $, or $\L(\G^C)$ consists of the set of non-isolated points in $C\cap \L$.
	\end{theorem}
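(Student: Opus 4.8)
The plan is to realize $\G^C$ as a Fuchsian group and extract its geometric finiteness from the properness of the associated plane immersion, and then to read off its limit set directly from the circle packing. Since $\G^C$ preserves the totally geodesic plane $P_C$ with $\partial P_C = C$, it acts discretely on $P_C\cong\H^2$, so it is a Fuchsian group with $\L(\G^C)\subseteq C\cap\L$. Under the identification $\mc{C}=G/H$, the hypothesis that $\G C$ is closed is equivalent to closedness of the corresponding $H$-orbit in $\G\backslash G$, hence to properness of the immersion $\iota:\G^C\backslash P_C\to M$; this is the only way the closedness hypothesis will enter. I first record the easy inclusion: when $\G^C$ is non-elementary its limit set is perfect, so each point of $\L(\G^C)$ is non-isolated in $\L(\G^C)\subseteq C\cap\L$, and therefore $\L(\G^C)$ is contained in the set of non-isolated points of $C\cap\L$.

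For finite generation I reduce to compactness of the thick part $Y^{\ge\e}$ of the convex core $Y=\G^C\backslash\mrm{hull}(\L(\G^C))$, which for a Fuchsian group is equivalent to geometric finiteness and hence (Ahlfors) to finite generation. Suppose $y_n\in Y^{\ge\e}$ escapes to infinity and lift to $\tilde y_n\in\mrm{hull}(\L(\G^C))\subseteq P_C$. If $\iota(y_n)$ stayed in a compact subset of $M$, properness of $\iota$ would force $y_n$ to stay compact, a contradiction; so $\iota(y_n)$ enters a cusp of the finite-volume core of $M$, meaning $\tilde y_n$ penetrates arbitrarily deep into a horoball at a parabolic point $p$ of $\G$. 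Because $\mrm{hull}(\L(\G^C))\subseteq P_C$ and horoballs based off $C$ meet $P_C$ only to bounded depth, deep penetration forces $p\in C\cap\L(\G^C)$; a comparison of recurrence (a conical limit point of $\G^C$ would give a recurrent, hence non-escaping, image ray) then shows $p$ is parabolic for $\G^C$, so $\tilde y_n$ lies deep in a genuine cusp of $Y$, contradicting $y_n\in Y^{\ge\e}$. I expect this horoball-penetration analysis together with the conical-versus-parabolic dichotomy to be the \emph{main obstacle}; once it is in place, $Y^{\ge\e}$ is compact and $\G^C$ is finitely generated.

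For the reverse inclusion I argue by contraposition. If $q\in C\cap\L\setminus\L(\G^C)$ then $q$ lies in a gap arc $I$ of $\L(\G^C)$, so the geodesic ray in $P_C$ toward $q$ eventually leaves $Y$; since $q\in\L$ its image stays in $\mrm{core}(M)$, and properness of $\iota$ makes it a proper ray escaping into a cusp, whence $q$ is a parabolic tangency point of the packing. Normalizing $q=\infty$ with a generator $z\mapsto z+1$ of $\mrm{Stab}_\G(q)$, the two disks tangent at $q$ become parallel half-planes and $\L$ is confined to the invariant strip between them. As $q\notin\L(\G^C)$, this parabolic does not preserve $C$, so the line $C$ is transverse to the strip and exits it near $q$, making $q$ isolated in $C\cap\L$ — contradicting that $q$ is non-isolated. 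Hence every non-isolated point of $C\cap\L$ belongs to $\L(\G^C)$, and combined with the easy inclusion this gives the desired equality whenever $\G^C$ is non-elementary.

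It remains to treat the elementary cases and pin down the dichotomy, and this is exactly where the circle packing hypothesis is essential. The genuinely separate case is $|\L(\G^C)|=|C\cap\L|=1$, a circle tangent to $\L$ at a single cusp. Otherwise I show $\L(\G^C)$ still equals the non-isolated points: if $\G^C$ contains a hyperbolic element, it translates any complementary gap arc of $C$, producing points of $C\cap\L$ that accumulate at its two fixed points and so make them non-isolated; and the possibility that $C\cap\L$ equals exactly those two fixed points is excluded, since then the two complementary arcs of $C$ would lie in distinct tangent disks, forcing the fixed points to be tangency (parabolic) points, impossible for hyperbolic fixed points. The parabolic subcases are handled by the same strip normalization as above. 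This yields $\L(\G^C)=$ non-isolated points of $C\cap\L$ in all remaining cases, completing the dichotomy.
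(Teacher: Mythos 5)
Your overall skeleton (Fuchsian action on the plane, perfectness for the easy inclusion, a rank-1 strip argument for parabolic points, and recurrence-versus-escape for radial points via properness of the immersion) is sound and partially parallels the paper, but there are two genuine gaps. The first is the finite-generation step, which you yourself flag as the ``main obstacle'': your route through compactness of the thick part $Y^{\geq\e}$ hinges on the claim that horoballs based off $C$ meet $P_C$ only to bounded depth, and that claim fails in the form you need it. It is true for each individual horoball, but the cusp neighborhood of $M$ is an \emph{infinite} $\G$-orbit of horoballs, and a sequence of horoballs whose base points lie off $C$ but approach $C$ quickly relative to their Euclidean sizes penetrates $P_C$ to arbitrarily great depth; so escaping points $y_n$ do not force base points onto $C$ without a further argument. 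To repair it you would have to pull back by $\g_n^{-1}$ into one of the finitely many model horoballs and rule out degeneration of the circles $\g_n^{-1}C$, which is exactly where discreteness of $\G C$ must enter quantitatively. The paper sidesteps thick-part compactness altogether: it proves that every non-isolated point of $C\cap\L$ is radial or parabolic \emph{for} $\G^C$ --- the radial case (Lemma~\ref{nonisolated radial point}) uses discreteness of the orbit together with compactness of the set of circles whose hulls meet a fixed compact set (Proposition~\ref{circles meeting cpt set}) --- and then invokes Bowditch's characterization of geometric finiteness for Fuchsian groups, for which finite generation follows. Your recurrence dichotomy for radial points is an acceptable equivalent of that lemma, but it does not substitute for the missing penetration analysis.

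The second gap is that rank-$2$ parabolic points are never treated. Your reverse inclusion concludes ``whence $q$ is a parabolic tangency point of the packing'' and then normalizes two tangent disks into parallel half-planes; this silently assumes $q$ has rank $1$. Circle-packing limit sets do admit rank-$2$ cusps (e.g.\ doubly periodic packings of the plane, with the cusp at $\infty$), and for a rank-$2$ parabolic point $\s$ the set $\L-\set{\s}$ is \emph{not} contained in a strip, so the transversality argument collapses. This is precisely where the paper needs a separate mechanism, Lemma~\ref{orbit under rk2 parabolic group}: if $C$ passes through a rank-$2$ parabolic point and does not have $\s$-rational slope, then already $\G_\s C$ is non-discrete, contradicting closedness; rationality of the slope then makes $C$ invariant under an infinite cyclic subgroup of $\G_\s$, putting $\s\in\L(\G^C)$. (Note the paper's remark that closedness of $\G C$ is used in the parabolic Lemma~\ref{nonisolated parabolic pt} \emph{only} in this rank-$2$ case, so this is not a decorative detail.) Until both points are addressed --- the penetration claim and the rank-$2$ cusps --- your argument establishes the easy inclusion and the rank-$1$ portion of the converse, but not the theorem.
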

    
    Here, for a subgroup $\G' < \G$, the set $\L(\G')$ denotes the limit set of $\G'$. We note that Theorem~\ref{limit set of a closed orbit} implies that the geodesic plane corresponding to the circle $C$ as in the statement projects to a geometrically finite surface inside $\G\backslash\H^3$.

\subsection{Existence and Isolation of Elementary Surfaces}

  Among the features of the presence of rank-$1$ limit points in the limit set is the presence of circles on the boundary of $\H^3$ which meet the limit set in only finitely many points.
  Dual circles in Apollonian packings are examples of such circles. See figure~\ref{fig:apollonian}.
  The following Theorem says that such circles give rise to closed plane immersions.

	\begin{theorem}\label{finite intersection implies closed}
		Let $\G$ be a geometrically finite group whose limit set $\L$ is a circle packing.
		Let $C$ be a circle such that $|C\cap \L| < \infty$. Then, $\G C$ is closed.
	\end{theorem}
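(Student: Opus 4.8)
The plan is to argue by contradiction. If $\G C$ is not closed, then there is a circle $D\in\overline{\G C}\setminus\G C$, and after passing to a subsequence we obtain pairwise distinct circles $\g_n C$ with $\g_n C\to D$ in $\mc C$. The goal is to show that such a sequence is forced to degenerate, namely that $\mrm{diam}(\g_n C)\to 0$, which contradicts the fact that the limit $D$ is a genuine circle.

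First I would record two structural features of a circle packing limit set $\L$ of a geometrically finite group. Writing $\Om=\partial\H^3\setminus\L$ for the domain of discontinuity, the Ahlfors finiteness theorem shows that $\G\backslash\Om$ is a finite union of finite-type surfaces; in particular the complementary disks of $\L$ form only finitely many $\G$-orbits. Since these disks are pairwise disjoint round disks on $\partial\H^3$ of finite total area, for every $\e>0$ only finitely many of them have diameter at least $\e$. Next, each connected component of $C\setminus\L$ is a crosscut of a single complementary disk, being connected and disjoint from $\L$; consequently $C$ is covered by the closures of finitely many complementary disks $D_1,\dots,D_r$, whose bounding circles lie in $\L$, and the finitely many points of $C\cap\L$ are the junctions between consecutive crosscuts. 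A junction of two distinct disks is a tangency point of the packing, hence a parabolic fixed point in this setting.

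The reduction step is the heart of the bookkeeping. From $\g_n C\subseteq\bigcup_i\g_n\overline{D_i}$ and $\mrm{diam}(\g_n C)\to\mrm{diam}(D)>0$ I conclude that, for some fixed index $i$, the disks $\g_n D_i$ have diameter bounded below along a subsequence; the finiteness of large disks then forces $\g_n D_i$ to equal a single complementary disk $D'$ for infinitely many $n$. Choosing $\g_*$ with $\g_* D_i=D'$ and writing $\g_n=h_n\g_*$ with $h_n\in\G_{D'}:=\mrm{Stab}_\G(D')$, and setting $C'=\g_* C$, I reduce to pairwise distinct $h_n\in\G_{D'}$ with $h_n C'\to D$. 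Now $\G_{D'}$ is a discrete geometrically finite Fuchsian group acting on $D'\cong\H^2$, and $\mrm{Stab}_G(D')\cong\mrm{PSL}_2(\R)$; since the $h_n$ are distinct they escape to infinity, so after passing to a subsequence there are $\w,\w'\in\partial D'$ with $h_n\to\w$ uniformly on compact subsets of $\partial\H^3\setminus\{\w'\}$. Because $C'$ meets the circle $\partial D'$ in at most two points and its arc inside $D'$ has interior points, at least two of any three points determining the circle $h_nC'$ lie off $\w'$ and hence converge to $\w$; therefore $h_nC'$ can avoid collapsing to the point $\w$ only if $\w'$ is one of the at most two points of $C'\cap\partial D'$. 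When $C\cap\L=\emptyset$ there is no such point — $C'$ is a compact subset of the open disk $D'$ — so $h_nC'\to\{\w\}$, the desired contradiction.

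The main obstacle is exactly the surviving case, in which the escape direction $\w'$ coincides with an intersection point $a\in C'\cap\partial D'$; as noted, when $a$ is a junction of two disks it is a parabolic fixed point of $\G_{D'}$. Here the whole circle $h_nC'$ no longer obviously collapses: its arcs inside the shrinking disks $h_n\g_* D_j$ with $j\neq i$ do shrink, but the image of the crosscut of $D'$ runs between $h_n a$ and $h_n b$, where $b$ is the second intersection point, and this does not collapse unless $h_n a\to\w$ as well. The crux is therefore to prove that $h_n a\to\w$. I would establish this from the cusp geometry at $a$: using bounded parabolicity of $a$ and a horoball at $a$ together with its $\mrm{Stab}_{\G_{D'}}(a)$-translates, the escaping elements whose repelling points accumulate at $a$ are, up to bounded factors, powers of the parabolic generator fixing $a$, and a direct computation in a horoball coordinate then gives $h_n a\to\w$, so that $h_nC'$ collapses and we again reach a contradiction. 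An alternative route in this case is to identify the putative limit $D$ as a circle tangent to $\partial D'$ meeting $\L$ in a single point, and then to invoke Theorem~\ref{accumulating on a horocycle implies dense} to conclude $\overline{\G C}=\mc C_\L$, which contradicts $|\g_nC\cap\L|=|C\cap\L|<\infty$ by a crossing count at the tangency points. Carrying out the cusp estimate is the substantive part; the choice of subsequences and the degenerate subcase where $C'$ is internally tangent to $\partial D'$ are routine.
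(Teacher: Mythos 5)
Your reduction is sound and in fact parallels the paper's opening move: you cover $C$ by the closures of the finitely many complementary disks it meets, use that only finitely many disks of the packing have diameter bounded below (the paper uses areas) to extract a disk $D'$ with $\g_n D_i\equiv D'$, and pass to a sequence $h_n$ in the stabilizer of $D'$; the collapse argument when $\w'\notin C'$ is also correct (though not for the reason you give --- three points converging to $\w$ can still lie on a huge circle; what you need, and have, is uniform convergence of $h_n$ on the compact set $C'\subset \mathbb{S}^2\setminus\set{\w'}$). The genuine gap is the surviving case $\w'=a\in C'\cap\partial D'$, which is where all the content of the theorem lives. The step ``$h_na\to\w$, so that $h_nC'$ collapses'' is a non sequitur: $h_n\to\w$ uniformly only away from $\w'=a$, and the germ of $C'$ at $a$ can be blown up to a full circle, so pointwise convergence of \emph{every} point of $C'$ to $\w$ does not force collapse. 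Concretely, with $D'$ the upper half-plane, $h_n(z)=n^2z-n\in\mrm{PSL}_2(\R)$ and $C'=\set{|z-i|=1}$ (so $a=0=\w'$, $\w=\infty$), one has $h_n\to\infty$ uniformly off $0$ and $h_n(0)=-n\to\infty$, yet $h_nC'$ is the circle of radius $n^2$ centered at $-n+n^2i$, which converges to the genuine circle $\set{\mathrm{Im}\,z=1/2}\cup\set{\infty}$. So even a successful cusp estimate proving $h_na\to\w$ would not finish the proof. The structural claim you want to feed into that estimate is also false: a sequence such as $h_n=q^np^n$, with $q$ hyperbolic in the stabilizer of $D'$ and $p$ parabolic fixing $a$, escapes with $\w'=a$ but is not a bounded factor times a power of $p$ (in a discrete group the elements $q^np^{k}$ are pairwise distinct for distinct $n$, so $h_np^{-m_n}=q^np^{\,n-m_n}$ cannot stay in a finite set). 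Finally, the alternative route through Theorem~\ref{accumulating on a horocycle implies dense} presupposes $|D\cap\L|=1$ for the limit circle, which you never establish, and that theorem is unavailable when $|C\cap\L|\leq 1$; the internally tangent subcase you dismiss as routine is exactly where the paper must invoke Dal'bo's theorem (Lemma~\ref{onepointclosed}).

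The paper circumvents precisely this difficulty: it never analyzes general escaping sequences in the Fuchsian stabilizer of one disk. Writing $B_1,\dots,B_n$ for the disks whose closures cover $C$ and reducing to $\g_kB_1=B_1$, it observes that the arc $C\cap\overline{B_1}$ is an equidistant curve of the hyperbolic geodesic $l\subset B_1$ joining its two endpoints; if every other disk shrank under $\g_k$, the endpoints of $\g_kl$ would coalesce and the Euclidean length of $\g_k(C\cap\overline{B_1})$, hence of all of $\g_kC$, would tend to $0$. Thus a second disk $B_j$ is eventually stabilized as well, so $\g_k\in\G^{B_1}\cap\G^{B_j}$. Two disjoint round disks whose closures meet are tangent at a single point $\a$, a rank-$1$ parabolic fixed point of the packing, so this common stabilizer lies in $\G_\a$, and rank-$1$ parabolic orbits of circles are manifestly discrete (Lemma~\ref{rk1 parabolic orbits are closed}). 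This is the idea your sketch is missing: rather than trying to force collapse from the limit dynamics of an arbitrary escaping sequence --- which, as the example above shows, is not a formal consequence even of pointwise convergence at the bad point --- one shows that non-collapse traps the sequence in a parabolic subgroup, where closedness of the orbit is elementary. If you wish to keep your framework, the repair is to show in the surviving case that the second disk tangent to $D'$ at $a$ (the one carrying the other branch of $C'$) must also be stabilized along a subsequence, and then conclude as the paper does.
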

    
    \begin{remark}
    Recall that a surface is \emph{elementary} if its fundamental group is finite or virtually isomorphic to $\Z$.
    Theorem~\ref{finite intersection implies closed} implies that if $|C\cap \L|<\infty$, then the geodesic plane in $\H^3$, determined by $C$, projects to a closed subsurface $S$ inside $\G\backslash \H^3$ whose fundamental group is $\G^C$ (the stabilizer of $C$ in $\G$).
    Theorem~\ref{limit set of a closed orbit} thus implies that $\L(\G^C) = \emptyset$ and, in particular, $\G^C$ is finite in this case.
    Hence, the surfaces arising in the situation of Theorem~\ref{finite intersection implies closed} are elementary.
    \end{remark}

	Our motivation for studying manifolds for which the limit set is a circle packing with regards to the rigidity problem comes from the fact that groups with circle packing limit sets contain many lattice surface subgroups.
    These are subgroups which leave one of the circles of the packing invariant.
    Equivalently, such groups give rise to geometrically finite manifolds containing totally geodesic finite volume hyperbolic surfaces.
    The presence of such "rich" submanifolds was shown to be an important feature driving topological rigidity in the proofs of~\cite{MMO-Planes,Shah,MargulisOppenheim}.

	Elementary surfaces, however, pose a serious challenge to applying such key technique.
    It is thus very important to understand the distribution of such elementary surfaces within $\Gamma\backslash \H^3$.
    
    We view the cardinality of the intersection $C\cap\L$ as a measure of complexity of an elementary surface with corresponding invariant circle $C\subset \mathbb{S}^2$.
    The following proposition shows that complexity must \emph{drop} in the limit if a sequence of elementary surfaces of a given complexity accumulates on some elementary surface.
   \begin{proposition} \label{finiteness of orbits in each degree}
		Let $k \geq 3$ be an integer and let $\mc{B}_k$ denote the set of circles $C\subset \mathbb{S}^2$ such that $|C\cap \L| = k$. Then, $\mc{B}_k$ is a discrete $\G$-invariant set.
		Moreover, if a circle $C$ with $|C\cap\L|<\infty$ is an accumulation point of $\mc{B}_k$, then $|C\cap \L|<k$.
	\end{proposition}

	The paper is organized as follows: we recall some background material on geometrically finite manifolds and prove some preliminary results on the topology of the space of circles in Section~\ref{section: prelims}. The proofs of Theorems~\ref{almost every U orbit is non-recurrent},~\ref{K thick implies plane denseness},~\ref{accumulating on a horocycle implies dense} and~\ref{finite intersection implies closed} are given in Sections~\ref{section: failure of recurrence}-\ref{section: elementary orbits} respectively. In Section~\ref{section: closed orbits}, we study properly closed immersions and establish Theorem~\ref{limit set of a closed orbit}. Proposition~\ref{finiteness of orbits in each degree} is proved in Section~\ref{section: discreteness elementary}.

\section{Background and Preliminaries}
\label{section: prelims}


	\subsection{Geometrically Finite Manifolds}
    The standard reference for the material in this section is~\cite{Bowditch1993}.
	A discrete subgroup $\G < G$ of isometries of $\H^3$ is \textbf{geometrically finite} if its action on $\H^3$ admits a finite sided fundamental domain.
    A geometrically finite hyperbolic manifold is a quotient of $\H^3$ by a geometrically finite Kleinian group.
    Bowditch~\cite{Bowditch1993} proved the equivalence of this definition to the limit set of $\G$ consisting entirely of radial and bounded parabolic limit points.
    This characterization of geometric finiteness will be of importance to us and so we recall here the definitions and basic properties of all the objects involved.

    Let $o$ be any point in $\H^3$. The limit set of $\G$, denoted by $\L(\G)$, is defined to be
    \begin{align*}
    	\L(\G) = \overline{\G o}\cap \partial\H^3.
    \end{align*}
    We often use $\L$ to denote $\L(\G)$ when $\G$ is understood from context.
    $\L$ is the smallest closed $\G$ invariant set in $\partial \H^2$ and as such $\G$ acts minimally on $\L$.
    In particular, this definition is independent of the choice of $o$.
    The domain of discontinuity of $\G$, denoted by $\Omega$, is the complement of $\L$ in $\partial \H^3$.
    The action of $\G$ on $\Omega$ is properly discontinuous.
    
    A point $\xi \in \L$ is said to be a \textbf{radial point} if there exists $R>0$, a geodesic ray $l: [0,\infty] \r \H^3\cup \partial\H^3$ and a sequence $t_n \r \infty$ such that for each $n$ there exists $\g_n \in \G$ so that
    \[ d_{\H^3}(\g_n o, l(t_n)) \leq R.  \]
    In other words, any geodesic ray terminating at $\xi$ returns infinitely often to a bounded subset of $\G\backslash \H^3$.
    The set of radial limit points is denoted by $\L_r$.
    
    Denote by $N$ the following subgroup of $G$.
    \begin{align*}
    	N = \set{n_z =\begin{pmatrix} 1 & z\\ 0 & 1 \end{pmatrix}: z\in \C}.
    \end{align*}
    A point $\s \in \L$ is said to be a \textbf{parabolic point} if the stabilizer of $\s$ in $\G$, denoted by $\G_\s$, is conjugate in $G$ to a subgroup of $N$.
    This implies that $\s$ is the only fixed point under $\G_\s$ in $\partial \H^3$.
    A geodesic ray terminating at a parabolic limit point doesn't accumulate in $\G\backslash\H^3$.
    The set of parabolic limit points will be denoted by $\L_p$.
    
    Since parabolic points are fixed points of elements of $\G$, $\L$ contains only countably many such points.
    Moreover, $\G$ contains at most finitely many conjugacy classes of parabolic subgroups.
    This translates to the fact that $\L_p$ consists of finitely many $\G$ orbits.
    
    The \textbf{rank} of a parabolic point is the rank of its stabilizer $\G_\s$ which is an abelian group.
    A rank $1$ parabolic fixed point in $\L$ will be referred to as a rank $1$ \textbf{cusp}.
    
    A parabolic limit point $\s$ is said to be \textbf{bounded} if $\G \backslash \left(\L-\set{\s} \right)$ is compact.
    All full rank (rank $2$) parabolic points are bounded.

    The key geometric property of a bounded rank-$1$ parabolic limit point of $\G$ is that it admits a double horocycle.
    The following is the precise definition.
	\begin{definition}
		Let $\s\in \L$ be a rank-$1$ parabolic fixed point. A pair of circles $L_1,L_2\subset \partial \H^3$ is said to be a \textbf{double horocycle} at $\s$ if $L_1 \cap L_2 = \set{\s}$ and if they bound open disks which are completely contained in $\Omega$.
	\end{definition}


\subsection{Cusp Neighborhoods}  \label{cuspnbhd}  
     Let $\set{\s_1, \dots, \s_s}\subset \partial \H^3$ be a maximal set of nonequivalent parabolic fixed points under the action of $\G$. 
     As a consequence of geometric finiteness of $\G$, one can find a finite disjoint collection of \textbf{open} horoballs $H_1, \dots, H_s \subset \H^3$ with the following properties (cf.~\cite{Bowditch1993}):
    
    \begin{enumerate}[(a)]
    	\item $H_i$ is centered on $\s_i$, for $i = 1,\dots, s$.
	    \item $\G \overline{H_i} \cap \G \overline{H_j} = \emptyset$ for all $i\neq j$.
        \item For each $i \in \set{1,\dots,s}$, $\g_1 \overline{H_i} \cap \g_2 \overline{H_i} = \emptyset$ for all $\g_1,\g_2 \in \G$, $\g_1 \neq \g_2$.
    \end{enumerate}
    
    Let $\mathcal{H} = \pi^{-1}(\bigcup_{i=1}^s \G H_i) \subset \mrm{F}\H^3$, where $\pi: \mrm{F}\H^3\r \H^3$ denotes the standard projection.
    We shall refer to $\mc{H}$ as a \textbf{cusp neighborhood}.

    
    \subsection{Topology of the space of circles}
   	As discussed in the introduction, our arguments will take place in the space of circles $\mc{C}= \mrm{PSL}_2(\C)/\mrm{PGL}_2(\R)$ on the boundary.
    In this section, we prove some simple facts about the topology of this space which will be useful for us.
    We shall need the following
    
	\begin{definition} \label{annulus}
		Let $C$ be a circle, $\eta$ be the center of one of the disks bounded by $C$ and $r$ be the radius of that disk. For $\e\in(0,r)$, let $N_\e(C)$ be defined as follows:
        \[ N_\e(C) = \set{x\in \mathbb{S}^2: d_{\mathbb{S}^2}(x,\eta) \in (r-\e,r+\e) }. \] 
        Then, the $\e$-\textbf{annulus} around $C$ is defined to be
        \[ \mc{N}_\e(C) = \set{ D: D \text{ a circle in } \mathbb{S}^2, D\subset N_\e(C) }. \]
	\end{definition}
	
    Note that the definition of $N_\e(C)$ is independent of the choice of the disk bounded by $C$ and hence the choice of $\eta$. Thus, $\mc{N}_\e(C)$ is well-defined. This definition provides a convenient description of a neighborhood of a circle as explained in the following simple proposition.
    
    \begin{proposition} \label{annuli are open}
		Every $\e$-annulus around a circle $C$ contains an open neighborhood in the space of circles.
	\end{proposition}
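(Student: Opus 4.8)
The plan is to use the elementary geometric description of circles on $\mathbb{S}^2$: each circle $D$ is determined by a center $\eta_D \in \mathbb{S}^2$ together with a spherical radius $r_D \in (0,\pi)$, with the usual two-fold ambiguity $(\eta_D, r_D) \leftrightarrow (-\eta_D, \pi - r_D)$ that I will resolve near $C$ by selecting the representative close to the fixed data $(\eta, r)$ of $C$. The two reductions I need are: (i) the quotient topology on $\mc{C} = \mrm{PSL}_2(\C)/\mrm{PGL}_2(\R)$ agrees with convergence of this center--radius data (equivalently, with Hausdorff convergence of circles as subsets of $\mathbb{S}^2$); and (ii) if $\eta_D$ and $r_D$ are sufficiently close to $\eta$ and $r$, then the entire circle $D$ lies in the open annular region $N_\e(C)$. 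Granting these, the preimage under the center--radius parametrization of a small product ball around $(\eta, r)$ is an open neighborhood of $C$ in $\mc{C}$ contained in $\mc{N}_\e(C)$, which is precisely the assertion (note $C$ itself lies in $\mc{N}_\e(C)$, since its points have distance exactly $r$ to $\eta$).

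For (i), I would lift from $\mc{C} = G/H$ to $G$. Since the quotient map $G \r G/H$ is an open continuous surjection admitting local sections, every circle near $C$ has the form $g \cdot C$ with $g$ near the identity in $G$. As $G = \mrm{PSL}_2(\C)$ acts on $\partial\H^3 = \mathbb{S}^2$ by Möbius transformations, which depend continuously on $g$ and act continuously on $\mathbb{S}^2$, the set $g\cdot C$ converges to $C$ in the Hausdorff metric as $g \r e$. In particular the center and radius vary continuously, giving (i); equivalently, convergence $D \r C$ in $\mc{C}$ forces $D \r C$ in Hausdorff distance on $\mathbb{S}^2$.

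For (ii), this is just the triangle inequality. Fix $\d_1, \d_2 > 0$ with $\d_1 + \d_2 < \e$, and suppose $d_{\mathbb{S}^2}(\eta_D, \eta) < \d_1$ and $|r_D - r| < \d_2$. For any $x \in D$ we have $d_{\mathbb{S}^2}(x, \eta_D) = r_D$, so
\[ |d_{\mathbb{S}^2}(x,\eta) - r| \leq |d_{\mathbb{S}^2}(x,\eta) - d_{\mathbb{S}^2}(x,\eta_D)| + |r_D - r| \leq d_{\mathbb{S}^2}(\eta,\eta_D) + |r_D - r| < \d_1 + \d_2 < \e. \]
Hence $d_{\mathbb{S}^2}(x,\eta) \in (r-\e, r+\e)$ for every $x \in D$, i.e. $D \subset N_\e(C)$ and so $D \in \mc{N}_\e(C)$.

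The only point requiring genuine care is (i): making precise that abstract convergence in the quotient $G/H$ implies geometric (Hausdorff) convergence of the corresponding subsets of $\mathbb{S}^2$, together with the bookkeeping of the two-fold center--radius ambiguity. Once a consistent local branch of $(\eta_D, r_D)$ near $(\eta, r)$ is fixed, this ambiguity is harmless, and the estimate in (ii) is then immediate. Everything else is formal, so I expect the write-up to be short.
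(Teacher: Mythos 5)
Your proposal is correct and follows essentially the same route as the paper: the paper also works with the center--radius parametrization (the $2$-to-$1$ covering map $\vp:\mathbb{S}^2\times(0,\pi)\r\mc{C}$ from Proposition~\ref{circles meeting cpt set}) and verifies by the identical triangle-inequality estimate that the $\vp$-image of a small product ball around $(\eta,r)$ lies in $\mc{N}_\e(C)$. The only difference is bookkeeping: the paper gets openness from $\vp$ being a covering map, while you re-derive the compatibility of the quotient topology on $G/H$ with Hausdorff convergence via local sections of $G\r G/H$, which amounts to the same thing.
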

    
    \begin{proof}
		Recall from the proof of Proposition~\ref{circles meeting cpt set} that there is a continuous $2$-$1$ covering map
        \[\vp: \mathbb{S}^2 \times (0,\pi) \r \mc{C},\]
        where $\mc{C}$ is the space of circles in $\mathbb{S}^2$. Let $\mc{N}_\e(C)$ be an $\e$-annulus around $C$ for some $\e$. Let $\d \in (0,\e)$. Let $\eta$ be the center of one of the disks bounded by $C$ and let $r>0$ be its radius. We shall show that
        \[ \vp [B(\eta,\d) \times (r-\e + \d,r+\e-\d) ] \subseteq \mc{N}_\e(C), \]
        where $B(\eta,\d)$ is the open disk of radius $\d$ around $\eta$ in $\mathbb{S}^2$ in the spherical metric. Note that we may assume that $\e$ is small enough so that $(r-\e + \d,r+\e-\d) \subseteq (0,\pi)$.
        
        To see this, let $x\in B(\eta,\d)$ and $d\in (r-\e + \d,r+\e-\d)$. Let $D = \vp(x,d)$ be the circle bounding the disk of radius $d$ around $x$. Let $y\in D$. Then, $d_{\mathbb{S}^2}(y,x) =d$. The following $2$ inequalities show that $y \in N_\e(C)$ (Def.\ref{annulus})
        \[ d_{\mathbb{S}^2}(y,\eta) \leq  d_{\mathbb{S}^2}(y,x) + d_{\mathbb{S}^2}(x,\eta) < r+\e-\d + \d =r+\e,  \]
        \[ d_{\mathbb{S}^2}(y,\eta) \geq d_{\mathbb{S}^2}(y,x) - d_{\mathbb{S}^2}(x,\eta) > d-\d > r-\e.\]
	\end{proof}
    
    \begin{proposition} \label{circles meeting cpt set}
		Let $K \subset \H^3$ be a compact set. Then, the set
        \[  \mc{C}(K) = \set{C: \mathrm{hull}(C) \cap K \neq \emptyset} \]
        is compact.
	\end{proposition}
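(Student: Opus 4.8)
The plan is to realize $\mc{C}$ concretely and reduce compactness of $\mc{C}(K)$ to the boundedness of a single real parameter. I work in the ball model, so that $\H^3$ is the open unit ball, $\partial\H^3=\mathbb{S}^2$, and I fix the base point $o$ to be the center of the ball. Every circle $C\subset\mathbb{S}^2$ bounds two spherical caps; writing $\eta$ for the center of one of them and $r\in(0,\pi)$ for its spherical radius gives a map
\[ \vp:\mathbb{S}^2\times(0,\pi)\r\mc{C},\qquad \vp(\eta,r)=\set{x\in\mathbb{S}^2: d_{\mathbb{S}^2}(x,\eta)=r}. \]
First I would check that $\vp$ is continuous and surjective, that $\vp(\eta,r)=\vp(-\eta,\pi-r)$, and that these are the only coincidences, so that $\vp$ is a continuous $2$-to-$1$ covering onto $\mc{C}$; comparing with the transitive $G$-action on $\mc{C}=G/H$ identifies the resulting quotient topology with the homogeneous one. (This is precisely the covering map that is later invoked in the proof of Proposition~\ref{annuli are open}.)

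Next I would make the geometry explicit. By rotational symmetry about the axis through $\eta$, the geodesic plane $\mrm{hull}(\vp(\eta,r))$ is a Euclidean sphere meeting $\mathbb{S}^2$ orthogonally, and an elementary computation (placing $\eta$ at the north pole and imposing orthogonality to $\mathbb{S}^2$) gives that its defining sphere is centered at $(0,0,\sec r)$ with radius $|\tan r|$, so that its nearest point to $o$ lies at Euclidean distance $(1-\sin r)/|\cos r|$ along the axis. Since the axis is a hyperbolic geodesic through $o$ and is the unique perpendicular from $o$ to the plane, the hyperbolic distance
\[ D(r):=d_{\H^3}\big(o,\mrm{hull}(\vp(\eta,r))\big)=2\,\mrm{artanh}\!\left(\frac{1-\sin r}{|\cos r|}\right) \]
depends only on $r$, is continuous and symmetric about $r=\pi/2$, vanishes at $r=\pi/2$, and tends to $\infty$ as $r\r 0^+$ or $r\r\pi^-$. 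The point of the formula is monotone control: if $\mrm{hull}(C)$ meets $K$ then $D(r)\le\sup_{p\in K}d_{\H^3}(o,p)=:R<\infty$, which confines the radius parameter to a compact interval $[r_0,\pi-r_0]$ with $r_0=r_0(R)>0$.

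Finally I would conclude by a sequential argument. Given $C_n\in\mc{C}(K)$, write $C_n=\vp(\eta_n,r_n)$; by the previous step $r_n\in[r_0,\pi-r_0]$, so after passing to a subsequence $(\eta_n,r_n)\r(\eta,r)$ inside the compact set $\mathbb{S}^2\times[r_0,\pi-r_0]$ and $C_n\r C:=\vp(\eta,r)$ by continuity of $\vp$. Choosing $p_n\in\mrm{hull}(C_n)\cap K$ and extracting a further subsequence with $p_n\r p\in K$, it remains to see that $p\in\mrm{hull}(C)$, which yields $C\in\mc{C}(K)$ and hence compactness. I expect this last point, the closedness of the incidence set $\set{(p,C): p\in\mrm{hull}(C)}$ in $\H^3\times\mc{C}$, to be the main obstacle, since it amounts to the convergence of the planes $\mrm{hull}(C_n)$ to $\mrm{hull}(C)$ uniformly on compact subsets of $\H^3$. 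I would settle it either from the explicit description above (the defining spheres have centers and radii depending continuously on $(\eta,r)$), or, more structurally, by observing that the incidence set is the homogeneous space $G/(H\cap\mrm{Stab}(o))$, whose projection to $\H^3=G/\mrm{Stab}(o)$ has compact fibers and is therefore proper, so that $\mc{C}(K)$ is the image of the compact preimage of $K$ under the projection to $\mc{C}=G/H$.
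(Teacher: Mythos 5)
Your proposal is correct and follows essentially the same route as the paper: both use the continuous $2$-to-$1$ parametrization $\vp:\mathbb{S}^2\times(0,\pi)\r\mc{C}$ and exploit rotational symmetry to confine the radius parameter to a compact subinterval of $(0,\pi)$, the paper doing so by reducing to $K$ a ball centered at the origin, you by the explicit distance function $D(r)$. The only real difference is that you carefully justify the closedness of the incidence set (a step the paper dismisses as ``clear''), and your alternative properness argument via $G/(H\cap\mrm{Stab}(o))$ would in fact give the whole proposition on its own.
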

    
    \begin{proof}
		Let $\mc{C}$ denote the space of circles in $\mathbb{S}^2$. It is clear that $\mc{C}(K)$ is closed.
        Thus, it suffices to prove this in the case where $K$ is a closed ball since any compact set is contained in a closed ball. Moreover, it suffices to take $K$ to be a closed ball centered around the origin in the ball model of $\H^3$. That is because for any $g\in G = \mrm{Isom}(\H^3)$, it is easy to see that $\mc{C}(gK) = g\mc{C}(K)$ and $G$ acts continuously on $\mc{C} \cong G/\mrm{PGL}_2(\R)$.
        
        Next, note that there is a continuous $2$-to-$1$ covering map from $\vp: \mathbb{S}^2 \times (0,\pi) \r \mc{C}$ given by sending $(x,r)$ to the boundary circle of the Euclidean disk in $\mathbb{S}^2$ centered at $x$ and of radius $r$. Hence, it suffices to prove that the pre-image of $\vp^{-1}(\mc{C}(K))$ is compact.
        
        Since $K$ is a compact ball centered around the origin in the ball model, for every $x\in \mathbb{S}^2$, there exists a compact subinterval $I\subset (0,\pi)$ such that
        \[\set{x} \times I = \set{x} \times (0,\pi) \cap \vp^{-1}(\mc{C}(K)).\]
        Moreover, from symmetry, the interval $I$ is independent of $x$. Hence, $\vp^{-1}(\mc{C}(K)) = \mathbb{S}^2 \times I$ which is compact as desired.
	\end{proof}


\section{Failure of Recurrence of Unipotent Orbits}
\label{section: failure of recurrence}

    This section is dedicated to the proof of Theorem~\ref{almost every U orbit is non-recurrent}.
    We shall need some technical lemmas before the proof.

\subsection{Some hyperbolic geometry}
	For background on hyperbolic geometry, we refer the reader to~\cite[Chapter A]{BenedettiPetronio}.
	Throughout this section, we assume $\G$ to be a torsion-free geometrically finite Kleinian group containing rank-$1$ parabolic subgroups and we use $\L$ to denote its limit set.
    We will need the following notion of a shrinking neighborhood of a cusp.
    \begin{definition}
    	A sequence of closed horoballs $H_n$ centered at a point $\s\in \partial \H^3$ is said to be shrinking to $\s$ if $H_{n+1}\subset H_n$ and given any point $o\in \H^3$, $d_{\H^3}(o, H_n) \r \infty$.
    \end{definition}

        For any two points $a,b\in \partial \H^3$, let $l(a,b)$ denote the geodesic joining $a$ and $b$.
        Given two geodesic lines $l_1, l_2 \subset \H^3$, the hyperbolic distance between $l_1$ and $l_2$ is defined as follows.
        \begin{equation*}
        d_{\H^3}(l_1,l_2) = \inf\set{d_{\H^3}(x_1,x_2): x_1\in l_1, x_2\in l_2  }.
        \end{equation*}
        When $l_1$ does not meet $l_2$ on the boundary of $\H^3$ is that the infimum in the above definition is realized by a unique pair of points $x_i \in l_i$, $i=1,2$.
        This is a consequence of the strict convexity of the distance function on $\H^3$.

        \begin{figure}
          \centering
          \includegraphics[width=0.5\textwidth]{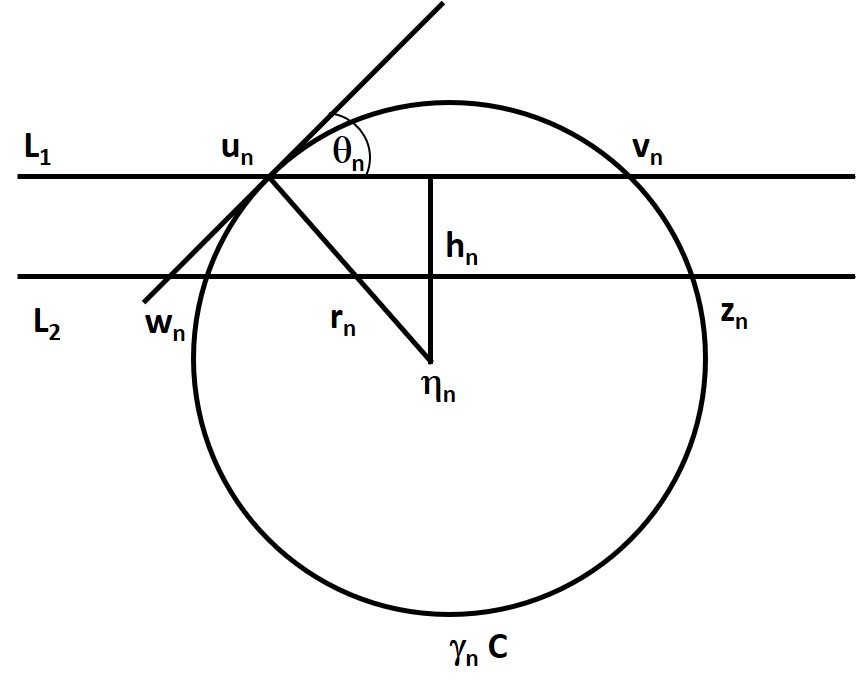}
          \caption{\label{fig:symunbndd} Proof of Lemma~\ref{angles lemma} }
        \end{figure}
	
    \begin{lemma} \label{angles lemma}
    	Let $\s \in \L(\G)$ be a rank-$1$ parabolic point and let $L_1$ and $L_2$ be a double horocycle at $\s$.
        Let $C\subset \partial \H^3$ be a circle.
        Suppose that there exists a sequence $\g_n \in \G$ such that $|\g_n C \cap L_i| = 2$, for $i=1,2$.
        For each $n$, let $\set{u_n,v_n} = \g_n C \cap L_1$ and let $\set{w_n,z_n} = \g_n C \cap L_2$.
        Assume further that the following holds.
       \begin{enumerate}
       	\item $\s\notin \g_n C$ for all $n\geq 1$.
        \item There exists some $\d>0$ such that for all $n$, if we let $\theta_n \in[0,\pi/2]$ denote the angle between $\g_n C$ and $ L_1$, then $\theta_n > \d$.
        \item There exists a sequence of horoballs $H_n$ shrinking to $\s$ such that $l(u_n,v_n) \cap H_n \neq \emptyset$ for all $n$.
       \end{enumerate}
        Then,
        \[ d_{\H^3}(l(u_n,v_n), l(w_n,z_n)) \r 0. \]
    \end{lemma}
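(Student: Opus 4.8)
The plan is to normalize the cusp and read off the two geodesics explicitly. I would pass to the upper half-space model of $\H^3$ with $\s$ moved to $\infty$, so that $\partial \H^3 = \C \cup \set{\infty}$. Since $\s \in L_1 \cap L_2$, both $L_1$ and $L_2$ pass through $\infty$ and are therefore Euclidean lines; as they meet only at $\s$, they are parallel, and after a rotation and translation I may write $L_1 = \set{(x,y) : y = c_1}$ and $L_2 = \set{(x,y): y = c_2}$ with $c_1 > c_2$. Put $\Delta = c_1 - c_2$. By hypothesis (1), $\s = \infty \notin \g_n C$, so each $\g_n C$ is a genuine Euclidean circle; I denote its center by $(\xi_n,\eta_n)$ and its radius by $\rho_n$.

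Intersecting $\g_n C$ with the horizontal line $L_1$ gives the two points $u_n,v_n = (\xi_n \pm s_1, c_1)$, where $s_1 = \sqrt{\rho_n^2 - (\eta_n - c_1)^2}$, and similarly $w_n,z_n = (\xi_n \pm s_2, c_2)$ with $s_2 = \sqrt{\rho_n^2 - (\eta_n - c_2)^2}$; in particular both chords are symmetric about the vertical plane $\set{x = \xi_n}$. Consequently $l(u_n,v_n)$ is the Euclidean semicircle in the plane $\set{y = c_1}$ with foot $(\xi_n, c_1, 0)$ and radius $s_1$, while $l(w_n,z_n)$ is the semicircle in the parallel plane $\set{y = c_2}$ with foot $(\xi_n, c_2, 0)$ and radius $s_2$. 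I record their apexes $A_1 = (\xi_n, c_1, s_1)$ and $A_2 = (\xi_n, c_2, s_2)$; since $d_{\H^3}(l(u_n,v_n), l(w_n,z_n)) \leq d_{\H^3}(A_1,A_2)$, it suffices to show $d_{\H^3}(A_1,A_2) \to 0$.

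To this end I first invoke hypothesis (3): horoballs shrinking to $\s = \infty$ have the form $\set{(x,y,t): t > T_n}$ with $T_n \to \infty$, and the maximal height attained on $l(u_n,v_n)$ is exactly $s_1$, so $l(u_n,v_n) \cap H_n \neq \emptyset$ forces $s_1 \geq T_n \to \infty$, and hence $\rho_n \geq s_1 \to \infty$. Next I use hypothesis (2): computing the angle at $u_n$ between $\g_n C$ and $L_1$ gives $\cos \th_n = |\eta_n - c_1|/\rho_n$ and $\sin \th_n = s_1/\rho_n$, so $\th_n > \d$ yields $s_1 > \rho_n \sin\d$ and $|\eta_n - c_1| < \rho_n \cos\d$. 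Combining the identity $(s_1 - s_2)(s_1 + s_2) = s_1^2 - s_2^2 = \Delta(2\eta_n - c_1 - c_2)$ with $|\eta_n - c_2| \leq |\eta_n - c_1| + \Delta$ and $s_1 + s_2 > \rho_n \sin\d$, I obtain $|s_1 - s_2| \leq \Delta(2\rho_n \cos\d + \Delta)/(\rho_n \sin\d)$, which remains bounded; hence $s_2 = s_1 - (s_1 - s_2) \to \infty$ as well. Finally, the standard distance formula in the upper half-space gives
\[ \cosh d_{\H^3}(A_1,A_2) = 1 + \frac{\Delta^2 + (s_1 - s_2)^2}{2 s_1 s_2}, \]
whose right-hand side tends to $1$ because the numerator is bounded and $s_1 s_2 \to \infty$. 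Therefore $d_{\H^3}(A_1,A_2) \to 0$, which proves the lemma.

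The crux of the argument is the boundedness of $|s_1 - s_2|$, and this is precisely where the uniform angle bound (2) is indispensable: it prevents $|\eta_n - c_1|$ from growing faster than $\rho_n$ and thereby keeps the two apexes at comparable heights. Hypothesis (3) then supplies the escape to infinity that renders the fixed horizontal gap $\Delta$ between the two parallel planes hyperbolically negligible. I expect the only delicate bookkeeping to be the verification of the angle formula $\sin\th_n = s_1/\rho_n$ and the sign in $s_1^2 - s_2^2 = \Delta(2\eta_n - c_1 - c_2)$; everything else is a direct squeeze using $0 \leq d_{\H^3}(l(u_n,v_n), l(w_n,z_n)) \leq d_{\H^3}(A_1,A_2) \to 0$.
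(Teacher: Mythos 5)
Your proof is correct and follows essentially the same route as the paper: normalize the cusp at $\infty$ so $L_1,L_2$ become parallel lines, reduce to the distance between the apexes of the two semicircular geodesics, use the shrinking horoballs to force the chord half-lengths (hence radii) to infinity, and use the uniform angle bound to keep the two chords comparable. If anything, your version is tidier: the exact formula $\cosh d_{\H^3}(A_1,A_2) = 1 + \frac{\Delta^2 + (s_1-s_2)^2}{2s_1s_2}$ replaces the paper's triangle-inequality splitting into horizontal and vertical moves, and your relation $\cos\th_n = |\eta_n - c_1|/\rho_n$, $\sin\th_n = s_1/\rho_n$ is the correct form of the angle identity (the paper states it with sine and cosine interchanged, a slip that does not affect its argument).
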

    
    Before proceeding to the proof, let us motivate the statement of the lemma.
        In~\cite[Lemma 9.2]{MMO-Planes}, the proof of the $K$-thickness property was derived from a certain isolation property of the boundary circles in the limit set.
        Lemma~\ref{angles lemma} provides us with concrete sufficient conditions for the failure of such isolation property in the presence of rank-$1$ parabolics.
        Using the assumption on the density of the geodesic $x$ in Theorem~\ref{almost every U orbit is non-recurrent}, we will be able to arrange for the $\G$ orbit of the circle corresponding the plane defined by $x$ to satisfy the geometric configuration given by the sufficient conditions in Lemma~\ref{angles lemma}.
    
    \begin{proof}[proof of Lemma~\ref{angles lemma}]
    Choose coordinates in the upper half space such that $\s = \infty$. In these coordinates, $L_1$ and $L_2$ become two parallel lines in $\R^2$.
    Without loss of generality, we will assume that the Euclidean distance between $L_1$ and $L_2$ is $1$.
    Let $n\in \N$. By assumption, $\s\notin \g_n C$. Hence, in these coordinates, $\g_n C$ is a Euclidean circle in $\R^2$.
    
    Let $\eta_n$ be the Euclidean center of the circle $\g_n C$ and let $r_n$ be its Euclidean radius.
    Let $h_n\geq 0$ be the vertical distance from $\eta_n$ to the line $L_1$.
    Let
        \[d_n = |u_n - v_n|,\]
    where $|\cdot|$ denotes the Euclidean norm.
    See figure~\ref{fig:symunbndd}.
        
    Note that since $\theta_n > \d$, we get that the ratio $\frac{h_n}{r_n} = \sin \theta_n$ is bounded away from $0$ for all $n$.
	Moreover, since $l(u_n,v_n)$ meets $H_n$, we have that 
        \[d_n \xrightarrow{n\r\infty} \infty. \]
    But, then, since $r_n \geq d_n/2$, one has that as $n\r \infty$, $r_n \r\infty$. Hence, we get $h_n \r \infty$ as $n\r\infty$.
    
    For all $n\geq 1$, let $\a_n$ (resp. $\b_n$) denote the highest point on the geodesic $l(u_n,v_n)$ (resp. $l(w_n,z_n)$).
    Note that
       	\begin{align}
       		\label{eqn: distance of geodesics bounded by distance of pts}
            d_{\H^3}(l(u_n,v_n) , l(w_n,z_n))
            	\leq d_{\H^3}(\a_n, \b_n).
       	\end{align} 
	Using the triangle inequality, the distance $d_{\H^3}(\a_n, \b_n)$ can be estimated using the sum of vertical (parallel to the $z$-axis) and horizontal (parallel to $\R^2$) distances in the upper half space.
    Recall that the hyperbolic distance between two points lying on the same vertical line in upper half space is given by the absolute value of the logarithm of the ratio of their heights.    
    Since $l(u_n,v_n)$ is a half circle which is perpendicular which is perpendicular to $\R^2$, we see that the height of $\a_n$ is $d_n/2$. Similarly, the height of $\b_n$ is $|z_n - w_n|/2$.
    
    Moreover, the hyperbolic distance between two points $(x_1,y_1,z)$ and $(x_2,y_2,z)$ at the same height $z>0$ in the upper half space is given by 
    \begin{align}
    \label{eqn: vertical distance bound}
    d_{\H^3}( (x_1,y_1,z),(x_2,y_2,z))  = \frac{1}{2} \ln \left( \frac{d^2}{z^2} +1\right),
    \end{align} 
    where $d$ is the Euclidean distance between $(x_1,y_1)$ and $(x_2,y_2)$.
    This can be seen by considering the vertical plane containing the two points and doing the calculation in the upper half plane instead.

    Now, note that in our chosen coordinates, the horospheres bounding our sequence of shrinking horoballs are horizontal planes parallel to $\R^2$.
    Let $k_n$ denote the height of the boundary of $H_n$. Then, since $H_n$ are shrinking to $\s$, $k_n \r\infty$.
    Moreover, the heights of $\a_n$ and $\b_n$ are at least $k_n$. Combining the estimates of the horizontal and vertical distances, we get
         \[ d_{\H^3}(\a_n, \b_n) \leq \frac{1}{2}\ln\left(1+ \frac{1}{k_n^2} \right)   
         	+ \left| \ln\left(\frac{|z_n - w_n|}{d_n}\right) \right|. \]
        Elementary geometric considerations show that since $\theta_n$ is bounded away from $0$, the ratio $\frac{|z_n - w_n|}{d_n}$ tends to $1$ as $n\r \infty$.
        Thus, by~\eqref{eqn: distance of geodesics bounded by distance of pts}, we get the desired conclusion.
    \end{proof}

    Our next Lemma shows how one can use Lemma~\ref{angles lemma} to prove the lack of $K$-thickness.
    \begin{lemma} \label{symmetrization lemma}
    	Let $x\in \mrm{F}\H^3$ be such that $x^-\in \L(\G)$.
        Let $C \subset \partial \H^3$ be the circle determined by the geodesic plane which is tangent to the first two components of $x$.
        Suppose that there exist 2 distinct components $B_1$ and $B_2$ of the domain of discontinuity $\Omega$ and a sequence $\g_n \in \G$ such that the following holds: for each $n$, there exist points $u_n,v_n \in  C \cap \g_n\overline{B_1}$ and
        $w_n,z_n \in  C \cap \g_n\overline{B_2}$ satisfying the following
        \begin{enumerate}
        \item $u_n$ and $v_n$ bound an open interval in $C$ which is fully contained in $B_1$.
        \item $w_n$ and $z_n$ bound an open interval in $C$ which is fully contained in $B_2$.
        \item $u_n, v_n$ lie in a different connected component of $C-\set{x^\pm}$ than that of $w_n,z_n$,
        \item $d_{\H^3}(l(u_n,v_n), l(w_n,z_n)) \r 0$,
        \item $u_n,v_n,w_n,z_n \xrightarrow{n\r\infty} x^-$.
        \end{enumerate}
        Then, the set
        \[ R(x) = \set{u\in U: xu\in \mrm{RFM}} \]
        is not $K$-thick for any $K>1$.
    \end{lemma}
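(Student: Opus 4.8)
The plan is to read off from the hypotheses two families of open arcs of $C$ on which the moving ideal endpoint of $xu_t$ lands in the domain of discontinuity $\Om$, and then to show that condition (4) forces these arcs to be \emph{multiplicatively long and symmetric} enough to swallow an interval of the form $[-Kt_0,-t_0]\cup[t_0,Kt_0]$ for any prescribed $K$.

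First I would set up coordinates. Identifying $\mrm{F}\H^3$ with $G$, write $x=g$ and recall that $U$ fixes one ideal endpoint $\xi$ of the geodesic determined by $x$, while the other endpoint $q(t)$ of $xu_t$ sweeps out $C\setminus\{\xi\}$ as $t$ ranges over $\R$; here $U\subset H=\mrm{Stab}(C)$, so every $xu_t$ determines the same circle $C$. The only feature of $\mrm{RFM}$ I need is that $xu_t\in\mrm{RFM}$ forces $q(t)\in\L$, so that $q(t)\in\Om\Rightarrow t\notin R(x)$. Since all four points $u_n,v_n,w_n,z_n$ lie on $C$, the geodesics $l(u_n,v_n)$ and $l(w_n,z_n)$ lie in the totally geodesic plane $P=\mrm{hull}(C)$, and their distance in $\H^3$ equals their distance inside $P\cong\H^2$. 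I would therefore work in the upper half-plane model of $P$ with $\xi=\infty$, in which $q$ becomes affine; after normalizing by an isometry I may assume $q(t)=t$ and $C=\R\cup\{\infty\}$. By (3) the two pairs lie on opposite rays of $C\setminus\{x^\pm\}$, so (after possibly swapping $B_1,B_2$) their parameters satisfy $0<a_n<b_n$ for $\{u_n,v_n\}$ and $c_n<d_n<0$ for $\{w_n,z_n\}$; hypotheses (1), (2) and (5) guarantee that the bounded chords $q((a_n,b_n))$ and $q((c_n,d_n))$ are precisely the arcs contained in $\g_n B_1\subset\Om$ and $\g_n B_2\subset\Om$, so both open intervals are disjoint from $R(x)$.

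The crux is to convert hypothesis (4) into a statement about the multiplicative sizes of these two gaps. For geodesics in $\H^2$ with real endpoints ordered $c_n<d_n<a_n<b_n$ the distance is governed by the cross-ratio, and a direct computation gives \[ \cosh d_{\H^3}\bigl(l(u_n,v_n),l(w_n,z_n)\bigr)-1=\frac{2(a_n-d_n)(b_n-c_n)}{(b_n-a_n)(d_n-c_n)}. \] Condition (4) forces the left-hand side to tend to $0$. Using $b_n-a_n\le b_n$ and $|c_n|-|d_n|\le|c_n|$ and discarding nonnegative cross terms, I would bound the right-hand side below both by $2\bigl(\tfrac{a_n}{b_n}+\tfrac{|d_n|}{|c_n|}\bigr)$ and by $2\bigl(\tfrac{a_n}{|c_n|}+\tfrac{|d_n|}{b_n}\bigr)$. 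Hence all four ratios $a_n/b_n,\ a_n/|c_n|,\ |d_n|/b_n,\ |d_n|/|c_n|$ tend to $0$, which is exactly the assertion \[ \frac{\min(b_n,|c_n|)}{\max(a_n,|d_n|)}\longrightarrow\infty. \] This single limit encodes both the multiplicative length of the two gaps and their symmetric alignment.

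Finally I would conclude. Given $K>1$, choose $n$ so large that the displayed ratio exceeds $K$, and pick any $t_0\in\bigl(\max(a_n,|d_n|),\ \min(b_n,|c_n|)/K\bigr)$, a nonempty interval. Then $[t_0,Kt_0]\subset(a_n,b_n)$ and $[-Kt_0,-t_0]\subset(c_n,d_n)$, so both intervals map under $q$ into $\Om$ and are disjoint from $R(x)$; thus $R(x)\cap\bigl([-Kt_0,-t_0]\cup[t_0,Kt_0]\bigr)=\emptyset$ and $R(x)$ fails to be $K$-thick. As $K$ was arbitrary, the proof is complete. The genuinely substantive step is the middle one: everything else is bookkeeping, but turning the soft geometric input ``the two connecting geodesics nearly meet'' into the precise two-sided multiplicative estimate is where the argument has content, and it is also where one must check that the estimate is symmetric in the positive and negative gaps so that a \emph{single} $t_0$ works on both sides.
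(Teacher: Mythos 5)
Your proposal is correct, and its skeleton matches the paper's proof: choose coordinates with $x^-=\infty$, $x^+=0$, $C=\widehat{\R}$, so that $(xu_t)^+=t$ and times with $(xu_t)^+\in\Omega$ are excluded from $R(x)$; use hypotheses (1)--(3) to place the two bounded intervals $(a_n,b_n)\subset\R_{>0}$ and $(c_n,d_n)\subset\R_{<0}$ inside $\Omega$; then show these two gaps become multiplicatively long and aligned enough to swallow $[t_0,Kt_0]\cup[-Kt_0,-t_0]$. Where you genuinely diverge is the key step converting hypothesis (4) into ratio information. The paper argues by comparison: it bounds the distance $d_{\H^3}(l(u_n,v_n),l(w_n,z_n))$ from below by the distance to an auxiliary vertical geodesic ($l(w_n,\infty)$ in one case, $l(0,\infty)$ in the other), normalizes by Möbius maps, and splits into two cases according to whether $|z_n|\geq v_n$, after fixing the asymmetric normalization $|w_n|\geq u_n$; each case yields a single ratio limit which is then fed into the interval argument. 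You instead use the exact inversive-distance identity, which I checked: for unlinked geodesics with endpoints $p_1<p_2<p_3<p_4$ one has $\cosh\delta-1=2(p_3-p_2)(p_4-p_1)/\bigl((p_2-p_1)(p_4-p_3)\bigr)$, which with your labels is exactly your displayed formula; expanding the numerator gives the lower bound $2\bigl(\tfrac{a_n}{|c_n|}+\tfrac{a_n}{b_n}+\tfrac{|d_n|}{|c_n|}+\tfrac{|d_n|}{b_n}\bigr)$, so all four ratios tend to $0$ simultaneously. This buys several things: no case split, no asymmetric WLOG, the symmetric conclusion $\min(b_n,|c_n|)/\max(a_n,|d_n|)\to\infty$ from which the endgame is immediate, and the observation that hypothesis (5) is not actually needed for the stated conclusion (the paper uses it only to produce a sequence of excluded scales $t_j\to\infty$, which is more than the negation of $K$-thickness requires). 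The paper's route is softer, in that it needs no exact formula, only the separation property of vertical geodesics and strict convexity; yours is shorter and handles both sides of $x^+$ in one symmetric computation.
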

    
    \begin{proof}
        	
    	Choose coordinates in the upper half space such that $x^- = \infty$, $x^+=0$,
        and $C=\widehat{\R}$.
        We may assume that $\pi(x) = (0,0,1)$, where $\pi: \mrm{F}\H^3 \r \H^3$ denotes the natural projection.
        In these coordinates, we have that $(xu(t))^+ = t$.
        Hence, the set $R(x)$ may be identified with the points where $C$ meets $\L$.
        Let $K > 1$. We will show that there exists a sequence $t_j \r \infty$ such that for all $j$:
        \begin{align}
        	\label{eqn: R(x) not K thick}
        	R(x) \cap \left( [-Kt_j,-t_j] \cup [t_j,Kt_j] \right) = \emptyset.
        \end{align}  
        
        Without loss of generality, we may assume that $v_n>u_n > 0$ and $z_n<w_n < 0$ for all $n$.
        We may also assume that
            \[ |w_n| \geq u_n. \]
        By assumptions $(1)$ and $(2)$ in the statement, we have that
            \[ (z_n, w_n)\cup (u_n,v_n) \subset \Omega, \]
        where $\Omega$ is the domain of discontinuity.
        Now, for each $n$, we have
            \begin{align} \label{base estimate}
            	d_{\H^3}(l(u_n,v_n), l(w_n,\infty)) \leq d_{\H^3}(l(u_n,v_n), l(w_n,z_n)).
            \end{align}
            
        \textbf{Case 1:} For all $n$, $|z_n| \geq v_n$.
        By applying a translation by $-w_n$, which is a hyperbolic isometry, the left hand side of inequality~\eqref{base estimate} becomes
        \[ d_{\H^3}(l(u_n,v_n), l(w_n,\infty)) 
        			= d_{\H^3}( l(u_n-w_n,v_n-w_n), l(0,\infty)). \]
            
        Applying the isometry $z\mapsto \frac{z}{u_n-w_n}$, we get
          \[ d_{\H^3}( l(u_n-w_n,v_n-w_n), l(0,\infty)) 
        	= d_{\H^3}\left( l\left(1,\frac{v_n-w_n}{u_n-w_n}\right), l(0,\infty)\right). \]
                        
        Since the right hand side of inequality~\eqref{base estimate} tends to $0$ by condition $(4)$ in the statement of the lemma, we necessarily have that
        \[ \frac{v_n-w_n}{u_n-w_n} = \frac{1+ \frac{v_n}{|w_n|}}{1+ \frac{u_n}{|w_n|}} \r \infty. \]
            
        Therefore, since $|w_n| \geq u_n$ by assumption, we get that
            \begin{align} \label{ratio of endpoints}
            	\frac{v_n}{|w_n|} \r \infty.
            \end{align}
            
        Moreover, we have that
            \begin{align} \label{symmetric intervals}
            	(-v_n, w_n)\cup (|w_n|,v_n)\subseteq (z_n, w_n)\cup (u_n,v_n) \subset \Omega.
            \end{align}
       Thus, for all $n$ sufficiently large, we have that $K < v_n/|w_n| $ and hence we get 
       \[  (Kw_n, w_n)\cup (|w_n|,K|w_n|) \subset (-v_n, w_n)\cup (|w_n|,v_n) \subseteq \Omega. \]
       This proves~\eqref{eqn: R(x) not K thick} by taking $t_n =  |w_n|$ for $n$ sufficiently large.
       Condition $(5)$ in the statement of the lemma guarantees that $|w_n| \r \infty$ as $n\r\infty$.
            
        \textbf{Case 2:} For all $n$, $|z_n| < v_n$.
        Note that we have
            \[ d_{\H^3}(l(0,\infty), l(w_n,z_n)) \leq d_{\H^3}(l(u_n,v_n), l(w_n,z_n)) \r 0. \]
            
        Applying the isometry $z\mapsto \frac{z}{|w_n|}$, we get
            \[ d_{\H^3}(l(0,\infty), l(w_n,z_n)) = d_{\H^3}\left(l(0,\infty), l\left(-1,\frac{z_n}{|w_n|}\right) \right) \r 0.  \]
            
        Hence, we get that
            \[ \frac{z_n}{w_n} \r \infty. \]
            
        But, by assumption, we have that
            \[ (z_n, w_n)\cup (|w_n|,|z_n|)\subseteq (z_n, w_n)\cup (u_n,v_n) \subset \Omega. \]
            
        Thus, arguing as in Case 1, we get that $R(x)$ is not $K$-thick for any $K$.
    \end{proof}

\subsection{Proof of Theorem~\ref{almost every U orbit is non-recurrent}}

    	Let $x\in \mrm{RFM}$ be a dense frame for the $A$ action on $\mrm{RFM}$.
        Our strategy will be to use the denseness of $xA$ to find elements of $\G$ to put the circle defined by $x$ into configurations satisfying the assumptions of Lemma~\ref{angles lemma}. This will allow us to verify condition $(1)$ in Lemma~\ref{symmetrization lemma}.
        Along the course of the proof, we will insure that these configurations also satisfy the other conditions in Lemma~\ref{symmetrization lemma} which will allow us to conclude the lack of $K$-thickness.
        
        By abuse of notation, we also use $x\in \mrm{F}\H^3$ to denote a lift of $x$.
        Let $\s\in \L(\G)$ be a rank-$1$ parabolic fixed point.
        Let $L_1$ and $L_2$ be a double horocycle at $\s$.
        Choose coordinates in the upper half space model of $\H^3$ in which $\s = \infty$.
        Without loss of generality, we may assume $L_1 = \set{z\in \C: \mathrm{Im}(z) = -1}$ 
        and $ L_2 = \set{z\in \C: \mathrm{Im}(z) = 1}$.        
        Let $S$ be the strip defined by
        \[ S = \set{z\in \C: -1< \mathrm{Im}(z) < 1}. \]
        
        Let $\G_\s\subset \G$ denote the stabilizer of $\s$.
        Then, $\G_\s$ acts by horizontal translations.
        Without loss of generality, we may assume $\G_\s$ is generated by $z \mapsto z+1$.
        Let $F$ be a fundamental domain for the action of $\G_\s$.
        Let $\l_0 \in F\cap \L$ and let $\l_n = \l_0 + n$ for $n\geq 1$.
        Since $\L$ is $\G$-invariant, we have that $\l_n \in \L$ for all $n$.
        
        Consider the sequence of circles $C_n$ centered at $\l_0+n/2$ and passing through $\l_0$ and $\l_n$, for $n\geq 1$.
        Let $y_n \in \mrm{F}\H^3$ be a frame whose first two vectors are tangent to to the geodesic plane defined by $C_n$ and satisfying the following conditions.
        \begin{equation*}
        y_n^+=\l_n,\quad  y_n^-=\l_0, \quad \pi(y_n) = (0,\l_0+n/2, n/2).
        \end{equation*}
         Here $\pi:\mrm{F}\H^3\r \H^3$ is the natural projection.
        
        Fix $\e>0$ such that $\e \ll 1$. For each $n$, let $N_\e(C_n)$ denote the open $\e$-annulus around $C_n$ (Definition~\ref{annulus}).
        Define the following neighborhood of $C_n$:
        \[ \mc{N}_\e(C_n) = \set{C\subset\C:C\subset N_\e(C_n) }. \]
        
        For each $n\geq 0$, let $B_\e(\l_n)$ denote the Euclidean ball of radius $\e$ around $\l_n$
        and let $B_\e^{\H^3}(\pi(y_n))$ denote the ball of radius $\e$ around $\pi(y_n)$ in the hyperbolic metric on $\H^3$.
        For each $y\in \mrm{F}\H^3$, let $C(y)$ denote the circle defined by the plane to which $y$ is tangent.   
        
        Let $E_n \subset \mrm{F}\H^3$ be an open neighborhood of $y_n$ in $\mrm{RFM}$ defined as follows:
        \[ E_n = \left\lbrace        	
        	y\in \mrm{F}\H^3: \begin{array}{ll}
            y^-\in B_\e(\l_n), y^+\in B_\e(\l_0), \\
            C(y) \in \mc{N}_\e(C_n), \pi(y)\in B_\e^{\H^3}(\pi(y_n))\end{array}
        	\right\rbrace.
        \]
        Since each $E_n$ is open in $\mrm{RF}\H^3$ and since the $A$-orbit of $x$ is dense in $\mrm{RFM}$ by assumption, then, for each $n$, we can find $\g_n \in \G$ and $a_n\in A$ such that
        \[ \g_n x a_n \in E_n. \]
        
        \subsubsection*{\textbf{Claim 1}} The circle $C(x)$ and the sequence $\g_n$ satisfy the hypotheses of
        Lemma~\ref{angles lemma}.
        
        First note that $C_n \cap L_i \neq \emptyset$ for $i=1,2$ for all $n\geq 2$.
        Hence, since $\g_n C(x) \in \mc{N}_\e(C_n)$ and $\e<<1$, we see that
        $|\g_n C(x) \cap L_i| = 2$ for $i=1,2$ and all $n\geq 2$.
        Moreover, clearly $\s = \infty \notin \g_n C(x)$ for all $n$.

        Next, let $\rho_n \in [0,\pi/2]$ denote the angle between $C_n$ and $L_1$. Let $a$ denote the Euclidean distance from $\l_0$ to $L_1$. Then, since the center of $C_n$ is $\l_0 + n/2$, we see that $\cos(\rho_n) = 2a/n$.
        Hence, $\rho_n \r \pi/2 $ and in particular is bounded away from $0$ as $n\r \infty$ and therefore  
        there exists some $\delta > 0$, depending only on $\e$ and $a$, such that for all $n\geq 1$ and for all $C\in \mc{N}_\e(C_n)$, the angle in $[0,\pi/2]$ between $C$ and $L_1$ is at least $\d$.
        This verifies condition $(2)$ of the Lemma.
        
        To verify the last condition, let $\set{u_n',v_n'} = C_n \cap L_1$.
        Then, $\sin (\rho_n) = 2a/|u_n'-v_n'|$, where $|\cdot|$ denotes the Euclidean distance.
        Since $\rho_n \r \pi/2$, we see that $|u_n'-v_n'| \r \infty$.        
        Let $\set{u_n,v_n} = \g_n C(x) \cap L_1$ and $\set{w_n,z_n} = \g_n C(x) \cap L_2$.
        Then, since $\g_n C(x) \in \mc{N}_\e(C_n)$, we see that 
        \begin{align} \label{eqn: u_n - v_n tends to infinity}
        	|u_n -v_n| \r\infty.
        \end{align}
        
        Let $H_n$ be the horoball centered at $\s = \infty$ and bounded by the horizontal plane parallel to $\R^2$ at height $|u_n-v_n|/2$.
        Then, $H_n$ shrinks to $\s$ and $l(u_n,v_n) \cap H_n \neq \emptyset $ for all $n$.
        Thus, condition $(3)$ is verified and we conclude by Lemma~\ref{angles lemma} that
        \begin{align*}
        	d_{\H^3}(l(u_n,v_n), l(w_n,z_n)) \r 0.
        \end{align*}
        
        \subsubsection*{\textbf{Claim 2}} $x$ satisfies the hypotheses of Lemma~\ref{symmetrization lemma}.
        
        Note that $\L \subset S$. Hence, $\set{|\mathrm{Im}(z)| > 1 } \subset \Omega$.
        This implies conditions $(1)$ and $(2)$.
        Moreover, since $\g_n x^+, \g_n x^- \in S$, condition $(3)$ is also verified.
        Condition $(4)$ follows by Claim $1$.
        
        Let $\tau_n \in \G_\s$ be the hyperbolic isometry acting by horizontal translation satisfying that the Euclidean center of the circle $\tau_n \g_nC(x)$ belongs to the fundamental domain $F$ for the action of $\G_\s$.

        Then, by~\eqref{eqn: u_n - v_n tends to infinity}, we see that $\tau_n u_n $ and $\tau_n v_n$ tend to $\s$ as $n\r\infty$.
        Since the width of the strip $S$ is bounded, it is not hard to see that $\tau_n w_n $ and $\tau_n z_n$ also tend to $\s = \infty$ as $n\r\infty$.
        Thus, condition $(5)$ is verified and we conclude that $R(x)$ is not $K$-thick for any $K$ as desired.

\section{Recurrence of Horocycles and Rigidity of Planes}

        In this section, we give a proof of Theorem~\ref{K thick implies plane denseness}.
        We will use similar techniques to those developed in the previous section.
        Throughout this section, we assume that $\G$ is a torsion-free geometrically finite Kleinian group containing rank-$1$ parabolic subgroups.
        We further assume that the limit set of $\G$ is a circle packing i.e. that the complement of $\L$ in $\mathbb{S}^2$ consists of countably many round open disks.
        
        We recall the notion of rank-$1$ unboundedness defined in the introduction.
        
        \begin{definition} \label{definition unbounded point}
    	A radial limit point $\a \in \L$ is said to be \textit{rank}-$1$ \textit{unbounded} if there exists a rank-$1$ parabolic fixed point $\s$ such that for every horoball $H$ centered at $\s$ and every geodesic ray $\g:[0,\infty) \r \H^3$ ending at $\a$, the set
        \[ \set{t\geq 0: \g(t) \in \G H} \]
        is unbounded.
    \end{definition}

    The following result obtained in~\cite{MMO-Planes} will be useful for us. We refer the reader to~\cite{MargulisOppenheim} and~\cite{Shah} for versions of this result in the finite volume setting.
    Recall the $C_\L$ denotes the set of circles $C\subset\partial\H^3$ such that $C\cap \L \neq \emptyset$.
    
    \begin{theorem} [Theorem 5.1 in~\cite{MMO-Planes}] \label{accumulating on the boundary of a component}
    Assume $\g_n C \r E$ where $E$ is a circle whose orbit is closed and $\G^E$ is non-elementary. Assume further that for infinitely many $n$, $\g_n C$ meets a disk bounded by $E$ in $\mathbb{S}^2$ which meets $\L$. Then, $\overline{\G C} = \mc{C}_\L$.

	\end{theorem}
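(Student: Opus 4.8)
The plan is to transport the statement into the homogeneous dynamics of the right $H$-action on $\G\backslash G$, with $H=\mrm{PGL}_2(\R)$, and to run a renormalization argument driven by recurrence of unipotent flows on the closed orbit determined by $E$. Under the identification $\mc{C}=G/H$, closures of $\G$-orbits on $\mc{C}$ correspond to closures of right $H$-orbits in $\G\backslash G$. Let $x\in\G\backslash G$ represent $C$ and $z$ represent $E$. The hypothesis $\g_n C\to E$ furnishes $h_n\in H$ with $xh_n\to z$, while the hypothesis that $\G E$ is closed with $\G^E$ non-elementary says that $Z:=zH$ is a closed orbit whose point stabilizer in $H$ is (conjugate to) the non-elementary geometrically finite Fuchsian group $\G^E$. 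Thus $Z$ is a proper geodesic plane on which the geodesic and horocycle flows are genuinely recurrent on the non-wandering set, by Eberlein's transitivity result (and by the full Ratner/Dani--Margulis theory when $\G^E$ is a lattice, as happens for the lattice surfaces supplied by the circle packing). This is the asymmetry I would exploit: the ambient $3$-manifold has poor recurrence, but the closed surface $Z$ does not. The goal $\overline{\G C}=\mc{C}_\L$ is dually the claim that $\overline{xH}$ is the full set corresponding to $\mc{C}_\L$.

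First I would establish \emph{transversality}: that $x\notin Z$, so that the approach $xh_n\to z$ is nontrivial. Write $xh_n=zg_n$ with $g_n\to e$ in $G$; if $x\in Z$ then $\overline{\G C}=\G E$ is closed and strictly smaller than $\mc{C}_\L$, so it suffices to show $g_n\notin H$ for infinitely many $n$. This is exactly where the remaining hypothesis enters: since $\g_n C$ meets a disk bounded by $E$ that meets $\L$, the circle $\g_n C$ crosses to the side of the plane $\widehat{E}$ on which limit points accumulate, forcing $g_n$ to have a nonzero component transverse to $H$. Moreover this pins the transverse direction to the expanding horospherical direction for the geodesic flow on $Z$ pointing into the relevant disk, and it guarantees that the resulting translates remain inside $\mc{C}_\L$.

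The heart of the argument is the \emph{renormalization / shearing} step. Using recurrence of the geodesic flow $a_t$ on $Z$, I would choose $t_k\to\infty$ with $za_{t_k}\to z$ and renormalize the transverse displacements $a_{t_k}^{-1}g_{n(k)}a_{t_k}$. Since $g_n\to e$ with a controlled nonzero component in the $a_t$-expanded direction, a diagonal extraction should yield a limit $g_\infty\notin H$ lying in the symmetry group of $\overline{xH}$; that is, $\overline{xH}$ becomes invariant under a nontrivial one-parameter unipotent subgroup $V$ transverse to $H$. The polynomial divergence of unipotent orbits converts the transverse \emph{approach} into genuine \emph{invariance}, as in the classical Oppenheim-type shearing.

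Finally, $\overline{xH}$ is invariant under the group generated by $H$ and $V$, which is strictly larger than $H$; since $H$ is a maximal proper connected subgroup of $G$, this forces $\overline{xH}$ to contain an open subset of the non-wandering set. Passing back to $\mc{C}$, the orbit $\overline{\G C}$ then contains an open subset of $\mc{C}_\L$, and minimality of the $\G$-action on $\L$ together with the topological transitivity of the frame flow on $\mrm{RFM}$ upgrades this to $\overline{\G C}=\mc{C}_\L$. I expect the renormalization step to be the main obstacle: one must verify that the renormalized transverse displacement converges to a \emph{nontrivial} element in a \emph{unipotent} direction, rather than being absorbed into $H$ or diverging. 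This is precisely where recurrence of the flow on $Z$ and the matching of the limit-set-side disk with the expanding horospherical direction must be combined carefully.
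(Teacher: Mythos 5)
First, a structural point: the paper never proves this statement at all --- it is imported verbatim as Theorem 5.1 of \cite{MMO-Planes} (the attribution sits in the theorem header), so your proposal can only be compared with the McMullen--Mohammadi--Oh argument, not with anything in the present text. Your general frame does match that argument in spirit: pass to the right $H$-action on $\G\backslash G$, write $xh_n=zg_n$ with $g_n\to e$ and $g_n\notin H$, and renormalize the transverse displacements using the non-elementary group $\G^E$ acting along the closed orbit $Z=zH$. Your transversality step is correct, and in fact simpler than you make it: a closed orbit of the countable group $\G$ is discrete, so if $\G C=\G E$ then $\g_n C\to E$ would force $\g_n C=E$ for all large $n$, contradicting that $\g_n C$ meets an \emph{open} disk bounded by $E$.

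The decisive gap is your endgame, which is not merely unjustified but cannot hold. If the closed set $Y=\overline{xH}$ were invariant under a one-parameter unipotent group $V\not\subset H$, then, being also right-$H$-invariant, $Y$ would be invariant under the subgroup generated by $H$ and $V$; by the very maximality of $H$ that you invoke (at the Lie algebra level, $\mathfrak{sl}_2(\R)$ together with any transverse line generates $\mathfrak{sl}_2(\C)$), that subgroup is all of $G$, and since $G$ acts transitively on $\G\backslash G$ this gives $Y=\G\backslash G$, i.e. $\overline{\G C}=\mc{C}$. That is strictly stronger than the theorem and false: $C$ meets $\L$ (implicit in the statement, and true in every application in this paper), so closedness and $\G$-invariance of $\L$ give $\overline{\G C}\subseteq\mc{C}_\L$, while $\mc{C}_\L\subsetneq\mc{C}$ because $\Omega\neq\emptyset$. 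The moral is that shearing near a closed orbit produces translated \emph{points} of the closure --- concretely, circles tangent to $E$ at points of $\L(\G^E)$ on the $\L$-side, or a segment/cone of such translates --- never genuine invariance of the closure under a transverse subgroup. The substantive content of the theorem is the conversion of such families of translates into density in $\mc{C}_\L$ (using the action of $\G^E$ on transverse vectors and minimality of the $\G$-action on $\L$); your sketch replaces exactly that part with a step that proves too much.

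The renormalization step is also gapped. Conjugation by $a_t$ expands only the horospherical transverse directions and acts trivially on the transverse direction corresponding to rotating $E$ about the geodesic of $z$ (circles through both endpoints $z^{\pm}$ at small angle), so if the $g_n$ degenerate along that direction your scheme yields nothing and one is forced to shear along $u_t$, where growth is only polynomial. Moreover, one needs times $t$ that simultaneously bring the renormalized displacement to unit size (i.e. lie within bounded distance of the prescribed scale $\log(1/|v_n|)$, $v_n$ the transverse part of $g_n$) and keep the base point $za_t$ in a fixed set where limits can be extracted. Recurrence of the geodesic flow, which is what you invoke, does not provide this matching: geodesic return times to a compact set need not be syndetic, and in this paper's applications $\G^E$ is a non-uniform lattice (the tangency points of the packing are cusps of $\G^E$), so cusp excursions create unbounded gaps. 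This is precisely the difficulty that is resolved in \cite{MMO-Planes} by thickness (in the lattice case, full density via non-divergence) of \emph{horocyclic} return times, or alternatively by renormalizing with powers of a loxodromic element of $\G^E$, whose expansion rates form a geometric sequence with bounded multiplicative gaps. You name the right obstacle in your closing sentence, but the proposal does not overcome it.
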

    
     It will be convenient for us to introduce the notion of the height of a point in $\H^3$ with respect to a point on the boundary.
     Suppose $\s \in \partial\H^3$.
     Given a choice of identification $\H^3\cong \C^2\times \R_{>0}$ as the upper half space model sending $\s$ to $\infty$, define a map $\mathrm{height}_\s:\H^3\r \R_{>0}$ to be the projection on the $\R_{>0}$ factor. More precisely,
     \begin{equation}\label{defn: height}
     	\mathrm{height}_\s(z,t) = t.
     \end{equation}
     
     \begin{remark} The following properties of the height function defined above will be used in the proof.
     \begin{enumerate}
     \item The notion of height introduced above depends on the choice of coordinates $\H^3 \cong \C^2 \times \R^+$.
     However, for any two points $x,y \in \H^3$, the inequality $\mathrm{height}_\s(x)\geq \mathrm{height}_\s(y)$ remains valid in any choice of coordinates sending $\s$ to $\infty$.
     
     \item In a choice of coordinates on $\H^3$ in the upper half space model sending $\s$ to $\infty$, parabolic isometries of $\H^3$, which fix $\s$, act by translations parallel to the complex plane.
     In particular, these parabolic isometries preserve the level sets of $\mathrm{height}_\s$ which are horospheres based at $\s$.
     \end{enumerate}
     
     \end{remark}

        \begin{proof}[Proof of Theorem~\ref{K thick implies plane denseness}]
        	Let $x$ be as in the statement and let $C$ be a boundary circle corresponding to the geodesic plane defined by the frame $x$.
            By definition of rank-$1$ unboundedness, there exists a rank-$1$ parabolic fixed point $\s \in \L$ such that for all horoballs $H$ centered at $\s$, the geodesic ray $r:[0,\infty)\r \H^3$ joining $\pi(x)$ to $x^-$ has the property that the set
            \[ \set{t\geq 0: r(t)\in \G H} \]
            is unbounded.
            Since $\G$ is geometrically finite whose limit set is a circle packing, we can find components $B_1,B_2 \subset \Omega$ such that
            \[ \partial B_1 \cap \partial B_2 = \set{\s}. \]
            Let $L_i = \partial B_i - \set{\s}$, for $i=1,2$.
            Choose coordinates in the upper half space in which $\s = \infty$ and 
            \[ L_1 = \set{z\in \C: \mathrm{Im}(z) = 0}, \qquad  L_2 = \set{z\in \C: \mathrm{Im}(z) = 1}. \]
            Let $S$ denote the strip defined as follows:
            \[ S = \set{z\in \C: \mathrm{Im}(z)\in (0,1)}. \]
            
            Thus, $\L- \set{\s}$ is contained in $S$. Let $n>0$.            
            Let $H_n$ be the horoball centered at $\s = \infty$ whose boundary horosphere is at height $n$.
            By the unboundedness of $x^-$, for all $n>0$, there exists $\g_n \in \G$ and $t_n>0$ such that
            $t_n \r \infty$ and 
            \begin{align} \label{eqn: geodesic meets H_n}
            	\g_n r(t_n) \in H_n. 
            \end{align}
                       
            Hence, the sequence $\set{\g_n C:n\geq 1}$ consists of Euclidean circles in $\R^2$ whose Euclidean radii tend to $\infty$ or straight lines.
            And, since $\g_n x^- \in \g_n C \cap \L - \set{ \s}$, we see that $\g_n C$ meets $S$ for all $n\geq 1$.
            Thus, $\g_n C$ must meet at least one of $L_1$ or $L_2$ for all $n \gg 1$. 
            Without loss of generality, suppose that $\g_n C \cap L_1 \neq \emptyset$ for all $n\gg 1$.
            For each $n$, let $\theta_n\in[0,\pi/2]$ denote the angle between $\g_n C$ and $L_1$.
            \subsubsection*{\textbf{Case 1}} $\g_n C \cap L_2 = \emptyset$ for all $n\gg 1$ or $\th_n \r 0$.
            
            Note that $\g_n x^- \in \g_n C \cap S$ and, in particular, $\g_n C \neq L_1$.
            We will show that there exists a sequence $\g_n'$ in $\G$ such that $\g_n' C \neq L_1$ and
            $\partial B_1 = L_1\cup\set{\s} \in \overline{\set{\g_n' C}}$.
            Hence, by Theorem~\ref{accumulating on the boundary of a component} (Theorem 5.1 in~\cite{MMO-Planes}), we conclude that $\G C$ is dense in $\mc{C}_\L$.

            Let $\G_\s$ be the stabilizer of $\s$ in $\G$.
            Then, in our chosen coordinates, $\G_\s$ acts on $\R^2$ by horizontal translations parallel to the real axis.
            Let $F$ denote a fixed fundamental domain for the action of $\G_\s$. So, $F$ is an infinite vertical strip with finite width.
            
            Let $u_n \in \g_n C \cap L_1$.
            For each $n$, let $\t_n \in \G_\s$ be such that $\t_n u_n \in F$.
            The circles $\t_n \g_n C$ meet the fixed compact set $ F \cap L_1 $ and hence must accumulate.
            As we noted before, $\t_n\g_n C$ are either straight lines or circles whose Euclidean radii tend to $\infty$.
            Hence, any accumulation point of $\set{\t_n \g_n C}$ must be a straight line which meets $L_1$ at a point in $L_1 \cap F$.
            Thus, if $\th_n \r 0$, we can see that $\t_n \g_n C \r L_1$. 
            
            On the other hand, if $\t_n \g_n C \cap L_2 = \emptyset$ for $n\gg 1$, any limiting straight line $L$ cannot meet $L_2$, while $L \cap L_1 \cap F \neq \emptyset$. Thus, we get that $\t_n \g_n C \r L_1$ in this case as well, which concludes the proof in this case.

            \subsubsection*{\textbf{Case 2}} After possibly passing to subsequence, $\g_n C \cap L_2 \neq \emptyset$ and $\th_n$ is bounded away from $0$ for all $n\gg 1$.                       
            We show that this case is incompatible with our assumption on $K$-thickness.            
            As a first step, we show that the sequence $\g_n C$ satisfies the hypotheses of Lemma~\ref{angles lemma}.
            
            To that end, we claim that $\g_n x^+ \neq \s$ for $n\gg 1$. Suppose otherwise.
            Then, we get (after passing to a subsequence if necessary) that $\g_{m} \g_{n}^{-1} \in \G_\s $ for $m,n \gg 1$.
            In particular, we see that $\g_n C$ and $\g_m C$ differ by a horizontal translation.
            Recall that the unboundedness of $x^-$ allowed us to find a sequence $t_n \r \infty$ such that $r(t_n) \r x^-$ while $\g_n r(t_n) \r \s=\infty$.
            Fix $n \gg 1$ so that for all $m >n$, $\g_{m} \g_{n}^{-1} \in \G_\s $.
            As $m$ tends to infinity, we see that $\g_n r(t_m) \r \g_n x^-$.
            
            Moreover, by assumption, $x^-$ is a radial limit point while $\s$ is a parabolic point.
            This implies that $\g_n x^- \neq \s$. In particular, we get that $\g_n x^- \in \L - \set{\s} \subset S$ and hence
            \[ \mathrm{height}_\s(\g_n r(t_m)) \r 0, \]
            as $m\r \infty$ (see~\eqref{defn: height} for the definition of $\mathrm{height}_\s$).
            Thus, it follows that for all $m \gg n$,
            \[ \mathrm{height}_\s(\g_n r(t_m)) \leqslant \mathrm{height}_\s(\g_n r(t_n)). \]           
            However, by~\eqref{eqn: geodesic meets H_n}, we have that $\mrm{height}_\s(\g_n r(t_n)) \r \infty$.
            In particular, we can find some $m>n$ so that the following inequalities hold.
            \begin{equation*}
            \mathrm{height}_\s( \g_m r(t_m)) \geqslant \mathrm{height}_\s(\g_n r(t_n)), \quad 
            \mathrm{height}_\s(\g_n r(t_m)) \leqslant \mathrm{height}_\s(\g_n r(t_n)).
            \end{equation*}
            Now note that $\g_m r(t_m) = (\g_m \g_n^{-1}) \g_n r(t_m)$ while $\g_m \g_n^{-1}$ is a horizontal translation which does not increase height, a contradiction.
            This proves our claim that $\g_n x^+ \neq \s $ for $n\gg 1$.
            
            This argument implies that the geodesic lines $\g_n l(x^-,x^+)$ are not vertical lines and in particular that
            \[ \g_n x^+,\g_n x^- \in \L \cap \overline{S}. \]
            The next step is to show that $\g_n C$ is a Euclidean circle for $n\gg 1$, i.e. not a straight line.
            Assume otherwise.
            Since $\g_n l(x^-, x^+) \cap H_n \neq \emptyset$, we necessarily get that the Euclidean distance between $\g_n x^-$ and $\g_n x^+$ tends to $\infty$.
            But, since both $\g_n x^-$ and $\g_n x^+$ belong to $\overline{S}$, we see that the angle $\th_n$ between the lines $L_1$ and $\g_n C$ must tend to $0$, contrary to our assumption.
            
            Next, we wish to show that $|\g_n C \cap L_i | = 2$ for $i=1,2$ and $n\gg 1$.
            Since $\g_n C$ are Euclidean circles and $\g_n \cap L_i \neq \emptyset$ for $i=1,2$ and $n\gg 1$ by assumption, we may assume that $|\g_n C \cap L_1| =2$.
            If $|\g_n C \cap L_2| =1$ for $n\gg 1$, it is straight forward to check that $\th_n \r 0$ in this case using the fact that $\g_n x^\pm \in \overline{S}$ and the Euclidean distance between $\g_n x^-$ and $\g_n x^+$ tends to $\infty$.

            This completes the verification of the hypotheses of Lemma~\ref{angles lemma}.
            Thus, if we let $\g_n C \cap L_1 = \set{ u_n,v_n}$ and $\g_n C \cap L_2 = \set{w_n,z_n}$, we get that
            \begin{align*}
            	d_{\H^3} (l(u_n,v_n) , l(w_n,z_n)) \r 0.
            \end{align*}
            
            Next, we verify the hypotheses of Lemma~\ref{symmetrization lemma}. 
            The first $4$ conditions have been already verified. As for the last condition, we may left multiply $\g_n$ by an element of $\G_\s$ so as to translate the midpoint between $u_n$ and $v_n$ so that it's inside $F\cap L_1$.
            Since the distance between $\g_n x^-$ and $\g_n x^+$ tends to $\infty$, along with our assumption that $\th_n$ remains bounded away from $0$, it is elementary to check that this forces $|u_n|, |v_n|, |w_n|$ and $|z_n|$ to tend to $\infty$, which is the last condition in the Lemma.            
            Lemma~\ref{symmetrization lemma} then implies that $R(x)$ is not $K$-thick for any $K>1$ which is the desired contradiction.
        \end{proof}
        

\section{A Recurrence-free Criterion for Denseness}

    This section is dedicated to the proof of Theorem~\ref{accumulating on a horocycle implies dense}.
    Throughout this section, we assume $\G$ is a torsion-free geometrically finite Kleinian group with limit set $\L$ and that $\L$ is a circle packing. We use $\mc{C}_\L$ to denote the set of circles which meet $\L$.
    The following lemma is the key ingredient in the proof.

	\begin{lemma} \label{inflate the balloon}
		Let $E$ be a circle such that $\G E$ is closed. Let $C_n$ be a sequence of circles such that $|C_n \cap \L(\G^E)| = 2$ and $C_n$ converge to a circle $D$ which is tangent to $E$. Then, $E \in \overline{\bigcup_n \G C_n}$.
	\end{lemma}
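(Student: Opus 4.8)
The plan is to produce a sequence $\d_n \in \G^E$ for which the circles $\d_n C_n$ converge to $E$; since $\d_n \in \G^E \subset \G$, this gives $E \in \overline{\bigcup_n \G C_n}$. Write $\set{a_n, b_n} = C_n \cap \L(\G^E)$ for the two intersection points, which lie on $E$ because $\L(\G^E) \subset E$. First I would record two preliminary observations. Since $C_n \to D$ and $D$ is tangent to $E$, the set $C_n \cap E$ converges to the single tangency point $p := D \cap E$, so $a_n, b_n \to p$; moreover $p \in \L(\G^E)$, being a limit of points of the closed set $\L(\G^E)$. The configuration also forces $\G^E$ to be non-elementary: if $\L(\G^E)$ consisted of at most two points, then $a_n,b_n$ would be these two points for every $n$, hence would lie on both $E$ and $D = \lim_n C_n$, contradicting $|D \cap E| = 1$. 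In particular the fixed points of hyperbolic elements of $\G^E$ are dense in $\L(\G^E)$, and geodesics of $P_E$ with both endpoints in $\L(\G^E)$ are non-wandering for the geodesic flow on $\G^E \backslash P_E$.

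The inflation mechanism rests on two facts, phrased in terms of the geodesic planes $P_C \subset \H^3$ bounded by a circle $C$. (i) The dihedral angle between $P_{C_n}$ and $P_E$ equals the intersection angle $\th_n$ of the circles $C_n$ and $E$; since $C_n \to D$ and $D$ meets $E$ tangentially, $\th_n \to 0$. (ii) Each plane $P_{C_n}$ meets $P_E$ along the geodesic $\g_n := l(a_n, b_n)$. Now suppose I have arranged $\d_n \in \G^E$ with $\d_n a_n \to \a$ and $\d_n b_n \to \b$ for two \emph{distinct} points $\a, \b \in E$. Passing to a subsequence, $\d_n C_n$ converges in the space of circles to some $C_\infty$ (Proposition~\ref{circles meeting cpt set} supplies the needed compactness), and $\a, \b \in C_\infty \cap E$ since $\d_n a_n, \d_n b_n \in \d_n C_n \cap E$. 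As $\d_n \in \G^E$ preserves $E$ and acts as an isometry of $P_E$, the plane $P_{\d_n C_n}$ meets $P_E$ along $\d_n \g_n = l(\d_n a_n, \d_n b_n)$, which converges to the genuine geodesic $l(\a,\b)$, while the dihedral angle remains $\th_n \to 0$. Thus $P_{C_\infty}$ contains $l(\a,\b)$ and makes dihedral angle $0$ with $P_E$ along it; two geodesic planes meeting along a common geodesic at angle $0$ must coincide, so $C_\infty = E$, as desired.

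Everything therefore reduces to the separation step: finding $\d_n \in \G^E$ so that $\d_n a_n$ and $\d_n b_n$ converge to distinct points. This is the main obstacle, precisely because $a_n$ and $b_n$ collapse to the \emph{common} point $p$; equivalently, I must translate the geodesics $\g_n$ — which escape toward $p$ — back so that they subconverge to a geodesic with two distinct endpoints. Since $a_n, b_n \in \L(\G^E)$, the $\g_n$ are non-wandering, and I would use the dynamics of $\G^E$ near $p$ to spread the endpoints. If $p$ is a radial limit point of $\G^E$, the contraction/expansion (north--south) dynamics of suitable elements of $\G^E$ at $p$ allow me to dilate the shrinking arc between $a_n$ and $b_n$ to a definite size. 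If $p$ is parabolic for $\G^E$, the parabolic subgroup alone cannot help (it merely translates the endpoints along $E$ toward $p$ without changing their configuration), so instead I would invoke the density of repelling fixed points of hyperbolic elements of $\G^E$ near $p$ and apply an appropriate power of such an element to expand the small arc around $p$ containing $a_n, b_n$. In either case a continuity/intermediate-value choice of exponent pins the separation to a fixed positive size, and passing to a subsequence yields distinct limits $\a \neq \b$. Handling the parabolic tangency cleanly, and ensuring the expanded endpoints do not recollapse at the attracting fixed point, is the delicate point of the argument.
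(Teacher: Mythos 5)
Your endgame is correct: once you have $\d_n \in \G^E$ with $\d_n a_n \to \a$ and $\d_n b_n \to \b$ \emph{distinct}, your dihedral-angle rigidity argument does force the limit circle to equal $E$, and it is essentially equivalent to the paper's version (the paper phrases it via geodesic curvature: the arcs $C_n \cap B$ have curvature tending to $1$ since $D$ is a horocycle for the hyperbolic metric on the disk $B$ bounded by $E$, curvature is preserved by the isometries $\d_n$, and an equidistant arc through two distinct boundary points has curvature $<1$). Your non-elementarity observation is also correct. The genuine gap is exactly where you flag it: the separation step, and the mechanism you sketch does not work. Applying powers of a single hyperbolic element of $\G^E$ with repelling fixed point at (or near) $p$ fails whenever $|a_n - b_n|$ is much smaller than the distance from $a_n$ to the repelling point. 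Concretely, in coordinates where the repelling point is $0$ and the element acts by $x \mapsto \l x$ with $\l>1$, take $a_n = 1/n$ and $b_n = 1/n + 1/n^2$: if the power $k_n$ is chosen so that $\l^{k_n}a_n$ stays bounded, then $\l^{k_n}|b_n - a_n| \approx 1/n \to 0$ and both images converge to the \emph{same} point; if instead $\l^{k_n}a_n \to \infty$, both images converge to the attracting fixed point. No choice of powers of one element controls all relative rates at which $a_n, b_n$ collapse onto each other versus onto $p$, and the parabolic-case sketch suffers from the same collapsing-ratio problem.

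What the paper uses, and what your proposal is missing, is that $\G^E$ is a \emph{geometrically finite} Fuchsian group --- this comes from Theorem~\ref{limit set of a closed orbit} (using the hypothesis that $\G E$ is closed), not from non-elementarity alone. Geometric finiteness gives a thick--thin decomposition: there are finitely many pairwise disjoint horoball cusp neighborhoods $H_i \subset B$ such that $\G^E \backslash \left(\mathrm{hull}(\L(\G^E)) - \bigcup_i \G^E H_i\right)$ is compact. Since $a_n, b_n \in \L(\G^E)$, the geodesic $l_n = l(a_n,b_n)$ lies in $\mathrm{hull}(\L(\G^E))$; and since a complete geodesic entering a horoball must exit it, while the horoball closures are pairwise disjoint, $l_n$ must meet the thick part. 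Compactness of the quotient thick part then produces $\b_n \in \G^E$ with $\b_n l_n$ meeting a \emph{fixed} compact set $K \subset B$, and this automatically yields distinct endpoint limits $a \neq b$ --- with no case analysis at $p$ and no local dynamics. This global input is genuinely needed: with only non-elementarity (all your proposal establishes), the separation claim can fail, since geometrically infinite Fuchsian groups admit geodesics with both endpoints in the limit set whose projections eventually leave every compact subset of the quotient, so that no translates can be pinned to a fixed compact set.
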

    
    \begin{proof}
		Let $B \subset \mathbb{S}^2$ be the disk bounded by $E$ such that $D\cap B \neq \emptyset$. Since $\G$ is torsion free, we have that $\G^B = \G^E$.
        Hence, we can endow $B$ with a hyperbolic metric in which $\G^E$ acts by isometries.
        Moreover, by Theorem~\ref{limit set of a closed orbit}, we have $\G^E$ is finitely generated. Thus, $\G^E$ acts on $B$ as a geometrically finite Fuchsian group.
        
        Let $p_1,\dots, p_k \in E$ be a maximal set of inequivalent parabolic fixed points under the action of $\G^E$.
        Let $H_1, \dots, H_k \subset B$ be open disjoint horoballs such that $H_i$ is centered at $p_i$ for $i=1,\dots, k$. Since $\G^E$ is geometrically finite, we can also choose these horoballs to satisfy conditions (a), (b) and (c) in \S~\ref{cuspnbhd}.
        Moreover, we get that $ \G^E \backslash (\mathrm{hull}(\L(\G^E))- \cup_i \G^E H_i)$ is compact. Here, we take $\mathrm{hull}(\L(\G^E))$ to be a subset of $B$ in its hyprebolic metric.
        
        Next, for each $n$, let $a_n,b_n \in \L(\G^E)$ be the $2$ distinct points where $C_n$ meets $E$. Let $l_n \subset B$ denote the geodesic joining $a_n$ and $b_n$ in the hyperbolic metric on $B$.
        
        We claim that for all $n$,
        \[  (\mathrm{hull}(\L(\G^E)) - \cup_i \G^E H_i) \cap l_n \neq \emptyset. \]
        
        To see this, fix some $n$ and suppose $l_n \cap \a H_i \neq \emptyset$ for some $\a \in \G^E$ and some $ i$. Then, since $l_n$ must exit $\a H_i$, one has $l_n \cap \partial \a H_i \neq \emptyset$. However, from conditions (b) and (c) in \S~\ref{cuspnbhd}, the horoballs in $\G^E H_i$ have disjoint closures, and thus 
        \[ l_n \cap \partial \a H_i \subset (\mathrm{hull}(\L(\G^E)) - \cup_i \G^E H_i). \]
        
        But, then, by compactness of $ \G^E \backslash (\mathrm{hull}(\L(\G^E))- \cup_i \G^E H_i)$, for each $n$, one can find an element $\b_n \in \G^E$ such that $\b_n l_n$ meets some fixed compact set $K \subset B$.
        
        Pass to a subsequence and assume $\b_n a_n$ and $\b_n b_n$ converge to $a$ and $b$ respectively. Since $\b_nl_n$ meets $K$ for all $n$, we necessarily have that $a\neq b$. Thus, we get that that the sequence of circles $\b_n C_n$ converges to some circle $F$ passing through $a$ and $b$. We claim that $F = E$.
        
        Note that if $F \neq E$, then $F\cap B \neq \emptyset$. Now, let $A_n$ denote the arc $C_n \cap B $ for each $n$. Note that, by assumption, $C_n \r D$ where $D$ is a horocycle in the hyperbolic metric on $B$ (being a circle internally tangent to its boundary). Therefore, we get that the geodesic curvature of $A_n$ in the hyperbolic metric on $B$ tends to $1$ as $n\r \infty$.
        
        However, since $\G^E$ acts by isometries on $B$, the geodesic curvature of $A_n$ and $\b_n A_n$ are equal. Therefore, the geodesic curvature of the arc $F \cap B$ must be equal to $1$. 
        
        But, if $F\cap B$ is an arc of a circle meeting $E$ in two points, which is an equidistant curve from the geodesic joining $a$ and $b$, its curvature cannot be equal to $1$.
        Thus, we get that $F = E$ as desired.
        
	\end{proof}


    Theorem~\ref{accumulating on a horocycle implies dense} will now follow from Lemma~\ref{inflate the balloon} and Theorem~\ref{accumulating on the boundary of a component}~\cite[Theorem 5.1]{MMO-Planes}.
    \begin{proof} [Proof of Theorem~\ref{accumulating on a horocycle implies dense}]
    
    	In the notation of Theorem~\ref{accumulating on a horocycle implies dense}, let $\s$ be the point where $D$ meets $\L$. Then there exists a component $B\subset \Omega$ such that
    \[ D-\set{\s} \subset B. \]
    	Then, since $\G$ is assumed to be geoemtrically finite, we have that $\L$ is not contained in a circle and thus, 
        \[ (\mathbb{S}^2 - \overline{B}) \cap \L \neq \emptyset. \]
        
        Let $\g_n\in \G$ be a sequence such that $\g_n C \r D$. Since $|C\cap \L| > 1$, we have that for all $n$:
        \[ \g_n C \cap  (\mathbb{S}^2 - \overline{B}) \neq \emptyset. \]
        
		Let $E = \partial B$. We have that $\L(\G^E) = E$, since $\G^E\backslash B$ is a finite volume surface. Thus, we have that
        \[ |\g_n C \cap E| = |\g_n C \cap \L(\G^E)| = 2. \]
        
        Now, we can apply Lemma~\ref{inflate the balloon} with $C_n = \g_n C$, $D$ and $E$ to conclude that, after passing to a subsequence if necessary, $\g_n C \r E$.
        Therefore, we can apply Theorem~\ref{accumulating on the boundary of a component} to conclude that
        \[ \overline{\G C} = \mc{C}_\L. \]
	\end{proof}
    
    Theorem~\ref{accumulating on a horocycle implies dense} has the following corollary: if a plane $P$ meets the convex core of $M$ and such that $P$ is not closed in $M$, then the closure of $P$ contains uncountably many planes $Q$ whose invariant circle meets $\L$ in at most $2$ parabolic points.
    The precise statement is the following:
    
    \begin{corollary} \label{non-closed orbits contain uncountably many circles with radial points}
    	Let $C$ be a circle such that $|C\cap \L| > 1$ and $\G C$ is not closed. Then, $\overline{\G C}$ contains uncountably many circles, each meeting $\L$ in uncountably many points, at most $2$ of which are parabolic fixed points.
    \end{corollary}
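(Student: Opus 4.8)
The plan is to combine the density dichotomy furnished by Theorem~\ref{accumulating on a horocycle implies dense} with a Cantor--Bendixson analysis of the intersections $Q\cap\L$, the key geometric input being that isolated points of such intersections are forced to be rank-$1$ cusps.

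First I would record three preliminary reductions. Since $\G C$ is not closed, Theorem~\ref{finite intersection implies closed} forces $|C\cap\L| = \infty$. Next, every circle $Q\in\overline{\G C}$ meets $\L$: writing $\g_n C\r Q$ and fixing $p\in C\cap\L$, a subsequence of the points $\g_n p\in \g_n C\cap\L$ converges to a point of $Q\cap\L$, because $\L$ is closed and the circles converge. Finally, $\overline{\G C}$ is uncountable. Indeed, $\overline{\G C}$ is a $\G$-invariant closed subset of the locally compact, second countable space $\mc{C}$, hence Polish; were it countable it would, by the Baire category theorem, contain a point isolated in $\overline{\G C}$. Since $\G$ acts by homeomorphisms preserving $\overline{\G C}$, the whole orbit of this point would be discrete, and one checks in either case (whether the point lies on $\G C$ or is a genuine accumulation circle) that this forces $\G C$ to be closed, a contradiction. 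Passing to the perfect kernel, I obtain an uncountable, closed, $\G$-invariant set $P\subseteq\overline{\G C}$.

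The second step prunes the parabolic locus. As $\L_p$ is countable and any three distinct points lie on at most one circle, the collection of circles through at least three parabolic fixed points is countable. Removing it from $P$ leaves an uncountable set $P'$ each of whose members passes through at most two points of $\L_p$. The heart of the matter is then the following structural dichotomy, valid because $\L$ is a circle packing: \emph{for a circle $Q$ through at most two parabolic points, either $|Q\cap\L|\le 2$ or $Q\cap\L$ is uncountable.} To see this, write $Q\cap\L = Q\setminus\bigcup_j(Q\cap B_j)$ as the complement in $Q$ of the countably many open arcs cut out by the disks $B_j$ of the packing that $Q$ crosses. An isolated point of $Q\cap\L$ has a gap on each side, and since $Q$ meets any round disk in a single arc, the two adjacent gaps lie in distinct disks $B_j\ne B_{j'}$; thus the point is a common boundary point of two disks of the packing, i.e.\ a tangency point, which in this setting is a rank-$1$ cusp and therefore parabolic. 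Hence $Q\cap\L$ has at most two isolated points. A short Cantor--Bendixson argument now shows that its derived set $(Q\cap\L)'$ has no isolated point: a point isolated in $(Q\cap\L)'$ would be a limit of infinitely many points of $Q\cap\L$ that are themselves isolated, hence parabolic, contradicting the bound of two. So $(Q\cap\L)'$ is perfect, hence either empty, forcing $|Q\cap\L|\le 2$, or nonempty and uncountable, forcing $Q\cap\L$ uncountable.

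It remains to show that only countably many circles of $P'$ meet $\L$ in at most two points; by the dichotomy the complementary uncountable set then consists precisely of circles meeting $\L$ in uncountably many points through at most two parabolics, which is the assertion. I expect this last step to be the main obstacle. Circles meeting $\L$ in a finite set of at least three points contribute only countably many circles by Proposition~\ref{finiteness of orbits in each degree}, so the difficulty is concentrated on circles meeting $\L$ in exactly one or two points, each of which has a closed $\G$-orbit by Theorem~\ref{finite intersection implies closed}; one must preclude that $\overline{\G C}$ absorbs uncountably many of them. Here I would split along the dichotomy of Theorem~\ref{accumulating on a horocycle implies dense}. If $\overline{\G C}=\mc{C}_\L$, I would produce the good circles directly: fixing a radial, non-parabolic limit point $\xi$, the pencil of circles through $\xi$ is uncountable, all but countably many pass through at most two parabolics, and all but a controlled subfamily meet $\L$ in more than two points, yielding uncountably many good circles. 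If $\overline{\G C}\ne\mc{C}_\L$, then by the contrapositive of Theorem~\ref{accumulating on a horocycle implies dense} every circle in $\overline{\G C}$ meets $\L$ in at least two points, so the bad locus reduces to the two-point circles, which I would control by a local-finiteness argument in the spirit of Proposition~\ref{finiteness of orbits in each degree} combined with the closedness of their $\G$-orbits.
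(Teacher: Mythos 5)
Your overall skeleton is the paper's: split on whether $\overline{\G C}=\mc{C}_\L$, use Theorem~\ref{accumulating on a horocycle implies dense} to exclude circles meeting $\L$ in one point in the non-dense case, discard the countably many circles through three or more parabolic points, and establish uncountability of $\overline{\G C}$ (your Baire argument ultimately rests on the same ``discrete implies closed'' fact recorded in Remark~\ref{discrete iff closed}, which the paper invokes as ``homogeneity''). Your Cantor--Bendixson dichotomy --- isolated points of $Q\cap\L$ are tangency points of the packing, hence rank-$1$ parabolic, so a circle through at most two parabolic points meets $\L$ either in at most two points or in uncountably many --- is precisely the content the paper compresses into ``thus, the claim follows,'' and it is correct.

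The genuine gap is the step you yourself flag as the main obstacle: showing that $\overline{\G C}$ cannot contain uncountably many circles meeting $\L$ in exactly two points. Neither of your proposed remedies would work: a local-finiteness argument in the spirit of Lemma~\ref{discreteness of B_k} is unavailable because that proof hinges on $k\geq 3$ tangency points determining a \emph{unique} circle, while two points determine a one-parameter pencil; and closedness of each individual two-point orbit does not bound the number of distinct such orbits entering $\overline{\G C}$. The missing idea is that the case is vacuous: when $\L$ is a circle packing, no circle meets $\L$ in exactly two points. Indeed, if $Q\cap\L=\set{a,b}$, the two arcs of $Q\setminus\set{a,b}$ lie in components of $\Om$, and they lie in \emph{distinct} components $B\neq B'$ because $Q$ meets an open round disk in a single connected arc; hence $a$ and $b$ both lie in $\partial B\cap\partial B'$, whereas the closures of two disjoint round open disks meet in at most one point. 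This is exactly the fact the paper uses elsewhere (``because $\L$ is a circle packing, $|C\cap\L|\geq 3$'' in the proof of Lemma~\ref{limit set of elementary orbits}, and the normalization $\partial B_1\cap\partial B_2=\set{\s}$ in the proof of Theorem~\ref{finite intersection implies closed}). With it, every circle surviving your pruning meets $\L$ in at least three points, at most two of them parabolic, so your dichotomy gives uncountable intersection; it also renders the separate pencil construction in the dense case unnecessary, since the same counting applied to circles through two fixed non-parabolic points of $\L$ handles that case identically.
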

    
    \begin{proof}
    	If $\G C$ is dense in $\mc{C}_\L$, then the statement follows trivially.
        If not, then by Theorem~\ref{accumulating on a horocycle implies dense}, we know that $\overline{\G C}$ cannot contain a circle meeting $\L$ in only one point.
        Moreover, by homogeneity, we know that $\overline{\G C}$ contains uncountably many circles.
        On the other hand, since any $3$ points determine a unique circle and there are only countably many parabolic fixed points in $\L$, we see that $\overline{\G C}$ contains at most countably many circles meeting $\L$ in $3$ or more parabolic points.
        Thus, the claim follows.
    \end{proof}
  
\section{Elementary Closed Orbits}
\label{section: elementary orbits}	
    In this section, we prove sufficient conditions for orbits to be closed. Throughout the section, we fix the following notation
    
    \begin{itemize}
      \item $\G$ is a torsion-free geometrically finite Kleinian group.
      \item The limit set $\L$ is a circle packing i.e. $\Omega = \mathbb{S}^2 - \L$ is a union of round open disks.
      \item $C\subset \mathbb{S}^2$ is a circle.
    \end{itemize}

\subsection{Orbits of Parabolic Subgroups}
	
    This section is dedicated to studying orbits of circles under the action of parabolic subgroups of $\G$.
    A useful first observation is the following
	
    \begin{remark} \label{discrete iff closed}
		The orbit $\G C$ is closed if and only if it is discrete. This follows from the homogeneity of the orbit and countability of $\G$.
	\end{remark}
    
    \begin{lemma} \label{rk1 parabolic orbits are closed}
		Let $\s \in \L$ be a rank $1$ parabolic fixed point and let $\G_\s$ be its stabilizer. Then, $\G_\s C$ is closed. 
	\end{lemma}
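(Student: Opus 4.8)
The plan is to reduce the statement to a discreteness assertion and then show that the orbit escapes to infinity geometrically. By Remark~\ref{discrete iff closed} it suffices to prove that $\G_\s C$ is discrete in $\mc{C}$. I would first normalize coordinates in the upper half-space model so that $\s = \infty$. Since $\G$ is torsion-free and $\s$ is a rank-$1$ parabolic fixed point, $\G_\s$ is infinite cyclic, generated by a single parabolic isometry fixing $\infty$, i.e. a Euclidean translation $p\colon z\mapsto z+\omega$; after a further choice of coordinates I may assume $\omega = 1$. The orbit is then $\set{p^m C : m\in\Z}$. If $C$ is a straight line parallel to the real axis, then $p^m C = C$ for all $m$ and the orbit is a single point, hence trivially closed; so I may assume $C$ is not of this form, in which case the orbit is infinite.

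The key step is to show that $\mathrm{hull}(p^m C)$ leaves every compact subset $K\subset\H^3$ as $|m|\to\infty$. There are two shapes to analyze. If $\s\notin C$, then $C$ is a Euclidean circle with some center $c$ and radius $r>0$, so $p^m C$ is the circle of radius $r$ centered at $c+m$, and $\mathrm{hull}(p^m C)$ is the hemisphere $\set{(z,t): |z-(c+m)|^2 + t^2 = r^2,\ t>0}$. Every point of this hemisphere has horizontal coordinate satisfying $|z|\geq |c+m|-r\to\infty$. If $\s\in C$, then $C$ is a line through $\infty$; since we excluded the horizontal case, $C$ is not parallel to the translation direction, and $\mathrm{hull}(p^m C)$ is the vertical half-plane over the line $\ell_m$ obtained by translating $\ell = C\cap\C$ by $m$. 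An elementary computation shows that the Euclidean distance from the vertical axis to $\ell_m$ tends to $\infty$ (because $\ell$ has a nonzero component transverse to the real axis), so again every point of $\mathrm{hull}(p^m C)$ has $|z|\to\infty$. In both cases $\mathrm{hull}(p^m C)$ avoids any fixed ball $\set{|z|\leq R}$ once $|m|$ is large.

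To conclude discreteness I would argue by contradiction using Proposition~\ref{circles meeting cpt set}. Suppose $p^{m_n}C \to D$ in $\mc{C}$ with the $m_n$ distinct. Choose a point $x_0 \in \mathrm{hull}(D)\cap\H^3$ and set $K = \overline{B}(x_0,1)$. By continuity of the hull construction and of the $G$-action on $\mc{C}\cong G/H$, every circle sufficiently close to $D$ has hull meeting $K$; thus $\mc{C}(K)$ contains a neighborhood of $D$, and hence $p^{m_n}C \in \mc{C}(K)$ for all large $n$, i.e. $\mathrm{hull}(p^{m_n}C)\cap K\neq\emptyset$. Since $|m_n|\to\infty$, this contradicts the escape statement of the previous paragraph. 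Therefore the orbit has no accumulation point and is discrete, proving Lemma~\ref{rk1 parabolic orbits are closed}.

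The main obstacle I anticipate is the bookkeeping at the interface between convergence in the space of circles $\mc{C}$ and the geometry of hulls in $\H^3$: one must be careful that accumulation of $\set{p^m C}$ in $\mc{C}$ really forces the corresponding hulls to meet a single fixed compact set, which is exactly where Proposition~\ref{circles meeting cpt set} enters. The remaining case analysis (Euclidean circle versus non-horizontal line) is routine but should be kept explicit, since the degeneration differs in the two cases---shrinking toward the cusp $\s$ versus receding vertical planes---even though both yield the same escape-to-infinity conclusion.
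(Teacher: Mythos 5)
Your proof is correct and takes essentially the same route as the paper: normalize coordinates so $\s = \infty$, observe that $\G_\s$ acts by discrete translations so the translated circles cannot accumulate, and conclude closedness from Remark~\ref{discrete iff closed}. The only difference is one of detail --- the paper asserts discreteness directly from the picture of translated circles in the plane, whereas you justify it by showing the convex hulls $\mathrm{hull}(p^m C)$ escape every compact subset of $\H^3$ and then invoking Proposition~\ref{circles meeting cpt set}, which is a more explicit verification of the same fact.
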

    
    \begin{proof}
		Choose coordinates in the upper half space model of $\H^3$ such that $\s = \infty$. In those coordinates, $\G_\s\cong \Z$ acts by discrete translations on the complex plane, fixing $\infty$. Thus, $\G_\s C$ is a discrete set of parallel circles (or lines if $\s \in C$) in the plane. Hence, $\G_\s C$ is closed, by remark~\ref{discrete iff closed}.
	\end{proof}
    
    \begin{definition} \label{rational slope}
		Let $\s \in \L$ be a rank $2$ parabolic fixed point and let $\G_\s$ be its stabilizer. Put $\s = \infty$ in the upper half space model and let $u$ and $v$ be
        a basis for the action of $\G_\s$ on $\R^2$ i.e. $u$ and $v$ define the $2$ lines parallel to the axes of translations of $\G_\s$.
        A circle $C$ is said to have a $\s$-\textbf{\textit{rational slope}} if $\s \in C$,
        and for any vector $w = au+bv \in \R^2$ parallel to $C$, $b/a \in \Q$.
	\end{definition}

    \begin{lemma} \label{orbit under rk2 parabolic group}
		Let $C$ be a circle such that $C\cap \L$ contains a rank $2$ parabolic fixed point $\s$. Then, $\G_\s C$ is closed if and only if $C$ has a $\s$-rational slope.
	\end{lemma}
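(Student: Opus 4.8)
The plan is to make everything explicit in the upper half space model with $\s=\infty$, where the statement reduces to an elementary fact about finitely generated subgroups of $\R$. Since $\s\in C$, placing $\s$ at $\infty$ turns $C$ into a genuine Euclidean line $L\subset\C=\R^2$ with a direction vector $w$, and the stabilizer $\G_\s\cong\Z^2$ acts by the translations coming from the lattice $\Lambda_0=\Z u+\Z v$. Translations preserve the direction of a line, so the orbit $\G_\s C=\set{L+\lambda:\lambda\in\Lambda_0}$ is a family of lines all parallel to $w$. By Remark~\ref{discrete iff closed} it suffices to determine exactly when this family is discrete.

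First I would parametrize the lines of a fixed direction $w$ by their perpendicular offset: let $q\colon\R^2\to\R^2/\R w\cong\R$ be the quotient projection, whose kernel is the line $\R w$. A line of direction $w$ is a single coset of $\R w$, hence is recorded faithfully by the single point $q(L)\in\R^2/\R w$, and translation by $\lambda$ sends $q(L)$ to $q(L)+q(\lambda)$. Thus $\G_\s C$ corresponds to the coset $q(L)+q(\Lambda_0)$ of the subgroup $q(\Lambda_0)=\Z q(u)+\Z q(v)\le\R$. I then need the routine but necessary topological check, using the local parametrization of $\mc{C}$ recorded in the proof of Proposition~\ref{circles meeting cpt set}, that within this one-parameter family convergence in $\mc{C}$ is precisely convergence of offsets, and that lines whose offsets leave every compact interval escape to the point circle at $\s$ and so leave $\mc{C}$. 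Granting this, $\G_\s C$ is closed (equivalently discrete) if and only if $q(\Lambda_0)$ is a discrete subgroup of $\R$.

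The final step is to identify this discreteness with $\s$-rational slope. Because $u,v$ are linearly independent and $\ker q=\R w$ is one-dimensional, at most one of $q(u),q(v)$ vanishes, and the elementary dichotomy for subgroups of $\R$ says that $q(\Lambda_0)=\Z q(u)+\Z q(v)$ is discrete exactly when either one of $q(u),q(v)$ is $0$ or $q(u)/q(v)\in\Q$, and is dense otherwise. Writing $w=au+bv$ and using $q(w)=0$ gives $a\,q(u)+b\,q(v)=0$, so commensurability of $q(u)$ and $q(v)$ is equivalent to $b/a\in\Q$, i.e.\ to $C$ having $\s$-rational slope in the sense of Definition~\ref{rational slope} (the vanishing cases $a=0$ or $b=0$ correspond to $w$ being parallel to a basis vector of $\Lambda_0$, the boundary instance of a rational direction). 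The only genuinely substantive point is the density half of the $\R$-subgroup dichotomy, namely the classical fact that $\Z+\Z\theta$ is dense for irrational $\theta$, together with the topological bookkeeping identifying the orbit in $\mc{C}$ with the image lattice; everything else is computation in the chosen coordinates.
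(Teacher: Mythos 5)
Your proof is correct and is precisely the argument the paper intends: the paper's entire proof consists of the one sentence that this is ``an exact analogue of the dichotomy between the behavior of lines with rational and irrational slopes on the standard torus.'' Your reduction via the offset map $q\colon\R^2\to\R^2/\R w\cong\R$ and the discrete-or-dense dichotomy for the subgroup $\Z q(u)+\Z q(v)\leq\R$ is exactly the standard proof underlying that torus fact, so you have simply supplied the details the paper leaves implicit, including the correct projective reading of the $a=0$, $b=0$ edge cases in Definition~\ref{rational slope}.
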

    
    \begin{proof}
    	This is an exact analogue of the dichotomy between the behavior of lines with rational and irrational slopes on the standard torus.
    \end{proof}
    
    \begin{remark} \label{remark: rk2 cusps and density}
    It should be noted that that in Lemma~\ref{orbit under rk2 parabolic group}, if $\G_\s C$ is not closed, then $\G C$ is in fact dense in the space of circles which meet $\L$.
    \end{remark}
    
    \subsection{Elementary Surfaces}
    In the remainder of this section, we study planes whose circle at infinity meets $\L$ in finitely many (parabolic) points.
    We prove that all such plane immersions are closed which is the content of Theorem~\ref{finite intersection implies closed}.
    
    We split this study into $3$ cases depending on the cardinality of the intersection of a circle with the limit set.
    The proof relies on endowing connected components of the domain of discontinuity with a hyperbolic metric in which the $\G$ action is by isometries.
    This technique was used fruitfully in~\cite{MMO-Planes}.
    
	\begin{lemma} \label{emptyclosed}
		Let $C\subset \mathbb{S}^2$ be a circle such that $C\cap \L = \emptyset$, then $\G C$ is closed.
	\end{lemma}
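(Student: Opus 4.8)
The plan is to reduce, via Remark~\ref{discrete iff closed}, to proving that $\G C$ is \emph{discrete} in $\mc{C}$; equivalently, that no sequence of pairwise distinct translates $\g_n C$ converges to a circle in $\mc{C}$. First I would record the geometry forced by the hypothesis. Since $C\cap\L=\emptyset$, the circle $C$ lies in $\Omega=\mathbb{S}^2-\L$, and as $C$ is connected it lies inside a single connected component $B$ of $\Omega$, which is a round open disk. Hence $C$ bounds a closed round disk $\overline{\Delta}\subset B$, a \emph{compact} subset of $\Omega$, and $\g C\subset \g B$ for every $\g\in\G$. Because $\G$ preserves both $\L$ and $\Omega$, any element fixing $C$ must preserve the side $\Delta$ lying in $\Omega$, so $\G^C\subset\G^B$.

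Two elementary facts drive the argument. First, the components of $\Omega$ are pairwise disjoint round disks in $\mathbb{S}^2$, a surface of finite area; since a disk of spherical diameter $\geq\e$ has area bounded below by a constant $c(\e)>0$, at most finitely many components can have diameter $\geq\e$. In particular, if the disks $\g_n B$ are pairwise distinct, then $\mathrm{diam}(\g_n B)\r 0$. Second, the stabilizer $\G^B$ is a discrete subgroup of the isometry group of $B$ (viewed with its hyperbolic metric), hence acts properly discontinuously on $B$; therefore the $\G^B$-orbit of the compact set $\overline{\Delta}$ leaves every compact subset of $B$, and the Euclidean diameters of its members tend to $0$. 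Both facts say that the relevant translates of $C$, regarded as subsets of $\mathbb{S}^2$, shrink to points.

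Now suppose toward a contradiction that $\g_n C\r D$ in $\mc{C}$ with $D$ a genuine circle and the $\g_n C$ pairwise distinct; recall that convergence in $\mc{C}$ entails Hausdorff convergence of the circles as subsets of $\mathbb{S}^2$, so the limiting set $D$ has positive diameter. I would split into two cases according to the components $\g_n B$. If these fall into only finitely many disks, then after passing to a subsequence $\g_n=\g_0\delta_n$ with $\delta_n\in\G^B$ pairwise distinct; by the second fact the sets $\delta_n\overline{\Delta}\supset\delta_n C$ shrink to a point, and applying the fixed homeomorphism $\g_0$ of $\mathbb{S}^2$ shows that $\g_n C$ also shrinks to a point, contradicting $\mathrm{diam}(D)>0$. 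Otherwise the $\g_n B$ are infinitely many distinct components, so by the first fact $\mathrm{diam}(\g_n B)\r 0$, and since $\g_n C\subset\g_n B$ the circles $\g_n C$ again shrink to a point, the same contradiction. Hence $\G C$ has no accumulation point in $\mc{C}$, so it is discrete and, by Remark~\ref{discrete iff closed}, closed. The one point requiring care — and the crux of the argument — is excluding a limit circle $D$ that meets $\L$ while the $\g_n C$ retain definite size; this is precisely what the two shrinking facts prevent, by forcing any escaping family of translates of $C$ to collapse to a point rather than to a positive-diameter circle.
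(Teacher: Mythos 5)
Your proof is correct, but it is not the paper's route. The paper disposes of this lemma in one sentence: since $C\cap\L=\emptyset$, the circle $C$ (together with the closed round disk $\overline{\Delta}$ it bounds inside $\Omega$) is a compact subset of $\Omega$, and the closedness of $\G C$ "follows from the fact that the action of $\G$ on $\Omega$ is properly discontinuous." Spelled out, that argument needs no case analysis and no circle-packing hypothesis: if $\g_n C\to D$ with the $\g_n$ pairwise distinct, then (after a subsequence) the round disks $\g_n\overline{\Delta}\subset\Omega$ either have diameters tending to $0$ (impossible, as $D$ is a genuine circle) or converge to a closed disk bounded by $D$; every point of the interior $\Delta_\infty$ of that limit disk lies in some $\g_n\Delta\subset\Omega$, so $\Delta_\infty\subset\Omega$, and then taking $K\subset\Delta_\infty$ compact, the set $K\cup\overline{\Delta}$ is a compact subset of $\Omega$ meeting infinitely many of its own $\G$-translates, contradicting proper discontinuity. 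Your argument instead decomposes by components of $\Omega$: you use the circle-packing structure (components are round disks, so an area pigeonhole forces infinitely many distinct components to shrink) together with proper discontinuity of the stabilizer $\G^B$ acting on the single disk $B$ containing $C$. What this buys is consistency with the rest of the paper — it is exactly the technique the author deploys for the harder cases in Lemma~\ref{onepointclosed} and in the proof of Theorem~\ref{finite intersection implies closed} (shrinking areas of distinct components, then reduction to the stabilizer of one component acting by hyperbolic isometries) — and it proves the slightly stronger statement that $\G C$ has no accumulation points at all. What it costs is generality: your proof uses the standing assumption that $\L$ is a circle packing, whereas the direct proper-discontinuity argument works for an arbitrary Kleinian group. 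One step you should make explicit: passing from "$\delta_n\overline{\Delta}$ leaves every compact subset of $B$" to "its Euclidean diameters tend to $0$" is not automatic for arbitrary compact sets; it holds here because the $\delta_n\overline{\Delta}$ are \emph{round} disks contained in the round disk $B$ (equivalently, because they have fixed hyperbolic diameter), and a round disk contained in $B$ and disjoint from a large concentric compact disk must have small Euclidean diameter.
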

    
    \begin{proof}
		This follows from the fact that the action of $\G$ on $\Omega$ is properly discontinuous.
	\end{proof}

    \begin{lemma}[cf. Section $3$ of~\cite{MMO-Planes}] \label{onepointclosed}
		Let $C$ be a circle meeting $\L$ in only one point, $\s$. Suppose $\s$ is a parabolic fixed point. Then, $\G C$ is closed.
	\end{lemma}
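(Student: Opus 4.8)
The plan is to show that $\G C$ is discrete, which by Remark~\ref{discrete iff closed} is the same as being closed. First I would record the geometry forced by the hypothesis. Since $C\cap\L=\{\s\}$, the arc $C\setminus\{\s\}$ is connected and lies in $\Omega$, hence in a single component $B$ of $\Omega$; as $\L$ is a circle packing, $B$ is a round open disk and $\partial B\subset\L$. Because $C\setminus\{\s\}\subset B$ while $\s\in\partial B$, the circle $C$ is internally tangent to $\partial B$ at $\s$, i.e.\ $C$ is a horocycle based at $\s$ for the hyperbolic metric on $B$. I would then check that $\G^C=\G_\s$: any element stabilizing $C$ permutes $C\cap\L=\{\s\}$ and so fixes $\s$, giving $\G^C\subseteq\G_\s$; conversely, placing $\s=\infty$ so that $\G_\s$ acts by horizontal translations, disjointness of the packing forces $\partial B$ (and hence the parallel line $C$) to be horizontal, so $\G_\s$ preserves $C$. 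In particular $\G_\s\subseteq\G^{B}=:\Delta$, so $\s$ is a cusp of the Fuchsian group $\Delta$ acting on $B$, which is a lattice since $\partial B\subset\L$ gives $\L(\Delta)=\partial B$. (This already forces $\s$ to have rank $1$, as $\Z^2$ cannot embed in a Fuchsian group.)

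Next I would argue by contradiction: suppose $\G C$ is not discrete, so there are infinitely many distinct cosets $\g_n\G^C$ with $\g_nC\to D$ for a genuine (non-degenerate) circle $D$. Since $\G^C=\G_\s$, the base points $\g_n\s$ are pairwise distinct. Each $\g_nC$ is contained in the packing disk $\g_nB$, so $\operatorname{diam}(\g_nC)\le\operatorname{diam}(\g_nB)$. The disks of the packing are pairwise disjoint and $\mathbb S^2$ has finite area, so for every $\rho>0$ only finitely many of them have radius at least $\rho$. Hence, if infinitely many of the $\g_nB$ are distinct, a subsequence of their radii tends to $0$; then $\g_nC$ collapses to a point, contradicting that $D$ is genuine.

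The remaining, and harder, case is that a single disk $B'$ equals $\g_nB$ for infinitely many $n$. After multiplying by a fixed element and passing to a subsequence I may assume $B'=B$ and $\g_n\in\Delta$, so the $\g_nC$ are $\Delta$-translates of the horocycle $C$, based at the pairwise distinct cusps $\g_n\s\in\partial B$, which accumulate at some $\xi\in\partial B$. Using geometric finiteness of $\Delta$ (cf.\ \S~\ref{cuspnbhd}), I would choose a precisely invariant horoball $\mathcal O\subseteq\mathcal O_C$ at $\s$, where $\mathcal O_C$ is the horoball bounded by $C$. Then the $\g_n\mathcal O$ are pairwise disjoint horoballs accumulating at $\xi$, so the same finite-area argument inside $B$ shows their Euclidean diameters tend to $0$; since $\g_n\mathcal O_C$ and $\g_n\mathcal O$ are horoballs based at the common point $\g_n\s$ and therefore have comparable diameters, $\operatorname{diam}(\g_nC)=\operatorname{diam}(\partial(\g_n\mathcal O_C))\to 0$ as well, again contradicting that $D$ is genuine.

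The main obstacle is precisely this last case: it cannot be excluded from the packing geometry of $\mathbb S^2$ alone, since a single packing disk $B$ contains infinitely many translates of the horocycle $C$. One must instead descend to the intrinsic cusp structure of the Fuchsian group $\Delta$ on the disk $B$, and use a precisely invariant horoball at $\s$ to see that distinct horocyclic translates are forced to shrink in the Euclidean metric. Having obtained a contradiction in every case, I conclude that $\G C$ is discrete, hence closed.
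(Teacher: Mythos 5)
Your proof is correct, and its first half is exactly the paper's: both arguments place $C\setminus\{\s\}$ in a single packing disk $B$ with $\s\in\partial B$, observe that $C$ is a horocycle for the hyperbolic metric on $B$, and use disjointness of the packing disks plus finiteness of area to show that if infinitely many of the disks $\g_n B$ are distinct then the circles $\g_n C$ collapse and hence cannot converge in $\mc{C}$, thereby reducing to $\g_n\in\G^B$. Where you genuinely diverge is the remaining case: the paper disposes of it in one line by citing Dal'bo~\cite[Proposition C]{Dalbo2000}, which gives closedness of the $\G^B$-orbit of a horocycle based at a parabolic fixed point of a Fuchsian group, whereas you prove this from scratch via a precisely invariant horoball $\mathcal{O}\subseteq\mathcal{O}_C$ at $\s$, pairwise disjointness of the translates $\g_n\mathcal{O}$, the same finite-area collapse inside $B$, and the uniform comparability of Euclidean diameters of horoballs based at a common point at bounded hyperbolic distance. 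Your route is self-contained and yields extra information the paper never records: $\G^C=\G_\s$, that $\s$ necessarily has rank $1$, and a proof that $\G_\s\subseteq\G^B$ (the paper asserts ``$\s$ is a parabolic fixed point for $\G^B$'' without justification, and your translation-parallel-to-$\partial B$ argument supplies exactly this). The paper's route is shorter and leans on a general Fuchsian result that requires none of the packing geometry once the reduction is made. One blemish on your side, though not a gap: you invoke geometric finiteness of $\Delta=\G^B$ and the claim that it is a lattice, justified only by ``$\partial B\subset\L$ gives $\L(\Delta)=\partial B$'', which as stated is not a proof (that fact is true here but needs the Keen--Maskit--Series circle-packing theory the paper cites elsewhere). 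It is also unnecessary: precise invariance of a horoball at a parabolic fixed point holds for any discrete Fuchsian group by Shimizu's lemma, so the lattice claim can simply be deleted from your argument.
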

    
    \begin{proof}
		Let $B$ be the component of the domain of discontinuity $\Omega$ such that $C-\set{\s} \subset B$, so that $\s\in \partial B$. Let $(\g_n)$ be a sequence of elements of $\G$. We shall show that
        \[ \overline{\set{\g_n C}} \subseteq \G C. \] 
        
        If for infinitely many $n\neq m$, we have that $\g_n B \neq \g_m B$, then, after passing to a subsequence, we get $area(\g_n B) \r 0$. But, then we get that the diameter of $C$ also tends to $0$.
        
		Thus, after passing to a subsequence, we may assume that there exists some $n_0 \geq 1$ such that $\g_n B = \g_{n_0} B$ for all $n \geq n_0$. So, we can reduce to the case where
        \[ \g_n \in \G^{B} \]
        for all $n$, where $\G^B$ denotes the stabilizer of $B$ in $\G$.
        
        Note that $B$, being a round disk, can be endowed with the hyperbolic metric on $\H^2$. In this metric, the action $\G^B$ is by isometries and $C$ is a horocycle. Also, $\s$ is a parabolic fixed point for $\G^B$.
        Thus, the orbit of $C$ under $\G^B$ is closed by~\cite[Proposition C]{Dalbo2000}.
	\end{proof}
    

	\subsection*{Proof of Theorem~\ref{finite intersection implies closed}}
    	
    	By Lemmas~\ref{emptyclosed} and~\ref{onepointclosed}, it remains to show the conclusion when $1 < |C\cap \L| < \infty$.
		Let $\s \in C\cap \L$. Since $\s$ is isolated in $C\cap \L$ and $|C\cap \L| >1$, there exist $2$ distinct components $B_1, B_2 \subset \Omega$ such that $\partial B_1 \cap \partial B_2 = \set{\s}$. We say $\s$ is a point of tangency of $B_1$ and $B_2$.
        
        Let $B_1,\dots, B_n$ be the distinct components of $\Omega$ which $C$ meets. Let $C_i = C\cap \overline{B}_i$, for $i=1,\dots, n$. Then, each $C_i$ is an arc of positive length and $C = \cup_i C_i$. Moreover, one has        
        \[ \mathrm{length}(C) = \sum_{i=1}^n \mathrm{length}(C_i), \]
        where $\mathrm{length}(\cdot)$ is taken in the Euclidean metric on $\mathbb{S}^2$. Note that $\mathrm{length}(C_i)$  is bounded above by the circumference of $B_i$, for each $i$.

        Let $(\g_k)$ be a sequence of elements of $\G$. We shall show that
        \[ \overline{\set{\g_k C}} \subseteq \G C. \] 
        
        Suppose that for infinitely many $k\neq j$, we have that $\g_k B_i \neq \g_j B_i$, for some fixed $i \in \set{1,\dots,n}$. Then, after passing to a subsequence, we get $area(\g_k B_i) \r 0$. But, then we get $\mathrm{length}(C_i)$ also tends to $0$. If this happens for all $B_i$, then $\mathrm{length}(C) \r 0$ as $k\r \infty$.
        
        Thus, after passing to a subsequence, we may assume without loss of generality that for all $k\geq 1$
        \[ \g_k B_1 = B_1. \]
        In particular, $\g_k \in \G^{B_1}$. Write $C\cap \partial B_1 = \set{\s_1,\s_2}$. As in the proof of Lemma~\ref{onepointclosed}, we endow $B_1$ with the hyperbolic metric on $\H^2$ in which $\G^{B_1}$ acts by isometries. Denote this metric by $d_{\H^2}$. Let $l$ denote the geodesic in this metric whose endpoints are $\s_1$ and $\s_2$.
        
        Now, note that $C_1$ is an equidistant curve from $l$, i.e. there exists a constant $r>0$, for all $x\in C_1 - \set{\s_1,\s_2}$ such that
        \[ d_{\H^2}(x,l) < r. \]
        
        Thus, $\g_k C_1$ is equidistant from $\g_k l$ for all $k$.
        Now, suppose that for all $i \neq 1$, we have that $area(\g_k B_i) \r 0$. Then, it must be that $\g_k \s_1 \r \s$ and $\g_k \s_2 \r \s$ for the same point $\s \in \partial B_1$. But, since $\g_k l$ is the geodesic connecting $\g_k \s_1$ and $\g_k \s_2$, we have that the Euclidean length of $\g_k l$ must tend to $0$ as $k \r \infty$.
        
        Therefore, we get that the Euclidean length of $\g_k C_1$  also tends to $0$, being equidistant curves from $\g_k l$ (in $d_{\H^2}$). 
        Finally, since $area(\g_k B_i) \r 0$ for all $i\neq 1$ by assumption, we get that $\mathrm{length}(\g_k C_i) \r  0$ for all $i\neq 1$. But, then $\mathrm{length}(\g_k C) \r 0$ as $k \r\infty$. And, thus, there exists some $i\neq 1$ such that $area(\g_k B_i) \nrightarrow 0$.
        
        Now, assume that $B_1,\dots,B_n$ are ordered using the cyclic order defined by the circle $C$. Then, choose $B_j$ ($j\neq 1$) so that all the components $B_i$ for $i$ between $1$ and $j$ (in the cyclic order on $C$) on one side have that $area(\g_kB_i) \r 0$.
        
        Thus, after passing to a further subsequence, since $area(\g_kB_j) \nrightarrow 0$, then for all $k \geq 1$,
        \[ \g_k B_j = B_j. \]
        
        Hence, we get that for all $k\geq 1$
        \[ \g_k \in \G^{B_1} \cap \G^{B_j}. \]
        
        Next, suppose that $C \cap \partial B_1 \cap \partial B_j \neq \emptyset$. Then, without loss of generality, $\s_1 \in C \cap \partial B_1 \cap \partial B_j$.
        Thus, we get that $\G^{B_1} \cap \G^{B_j} \subseteq \G_{\s_1}$ and, so, for all $k\geq 1$
        \[ \g_k \in \G_{\s_1}. \]
        
        Hence, since $\s_1$ is a rank $1$ parabolic fixed point, by Lemma~\ref{rk1 parabolic orbits are closed}, we get that
        \[ \overline{\set{\g_k C}} \subseteq \overline{\G_{\s_1} C} = \G_{\s_1} C \subseteq  \G C,  \]
        as desired.
        
        Otherwise, write $C\cap \partial B_2 = \set{\s_3,\s_4}$. Then, since the length of one of the arcs of $C$ connecting $B_1$ and $B_j$, say the arc connecting $\s_1$ and $\s_3$, goes to $0$ under $\g_k$, there exists $\a \in \partial B_1 \cap \partial B_j $ such that $\g_k \s_1 \r \a$ and $\g_k \s_3 \r \a$. But, then we get that
        \[ \G^{B_1} \cap \G^{B_k} \subseteq \G_\a. \]
        And, thus, similarly, since $\a$ is a rank $1$ parabolic fixed point (being a tangency point of two components of $\Omega$), we get the desired conclusion.
	

\section{Limit Sets of Closed Orbits}
    \label{section: closed orbits}

    In this section, we prove Theorem~\ref{limit set of a closed orbit} concerning the rigidity of closed plane immersions. The proof is broken up into separate cases handled by the following Lemmas. A complete proof of the theorem is given at the end of the section.
    Recall that for a circle $C$, $\G^C$ denotes the stabilizer of $C$ in $\G$.
    For a point $\s \in \partial \H^3$, $\G_\s$ denotes the stabilizer of $\s$ in $\G$.
    
    \begin{lemma} \label{limit set of elementary orbits}
		Let $C$ be a circle such that $\G C$ is closed and $1<|C\cap \L| < \infty$. Then, $\L(\G^C) = \emptyset$.
	\end{lemma}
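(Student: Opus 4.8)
The plan is to confine the limit set of $\G^C$ to the finite set $C\cap\L$ and then exclude the possibility of one or two limit points. First I would observe that $\G^C$, being the stabilizer of the circle $C$, preserves the totally geodesic plane $\mathrm{hull}(C)\subset\H^3$ whose boundary is $C$. Choosing a base point $o\in\mathrm{hull}(C)$ gives $\G^C o\subset\mathrm{hull}(C)$, so that $\L(\G^C)\subseteq\overline{\mathrm{hull}(C)}\cap\partial\H^3 = C$; since also $\G^C\subseteq\G$, we have $\L(\G^C)\subseteq\L$. Hence $\L(\G^C)\subseteq C\cap\L$, which is finite by hypothesis. A discrete group with finite limit set is elementary, so $|\L(\G^C)|\in\{0,1,2\}$, and it suffices to rule out the values $1$ and $2$.

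The structural input that drives both remaining cases is that \emph{every} point of $C\cap\L$ is a rank-$1$ parabolic fixed point. Indeed, since $C\cap\L$ is finite, each of its points is isolated in $C\cap\L$; together with $|C\cap\L|>1$, this forces each such point to be a tangency point of two distinct components of $\Omega$, which is a rank-$1$ parabolic fixed point of $\G$, exactly as established in the proof of Theorem~\ref{finite intersection implies closed}. I expect this to be the crux of the argument, but it is already supplied by that earlier analysis, so the rest is elementary once it is in place.

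To exclude $|\L(\G^C)|=1$, say $\L(\G^C)=\set{\s}$ with $\s\in C\cap\L$, I would use that a torsion-free discrete group with a single limit point contains a nontrivial parabolic element $p$ fixing $\s$. Passing to upper half space coordinates with $\s=\infty$, the circle $C$ becomes a Euclidean line through $\infty$, and $p$ acts as a nonzero translation preserving $C$, hence a translation along $C$. Choosing any $\l\in(C\cap\L)\setminus\set{\s}$, which exists since $|C\cap\L|>1$, the orbit $\set{p^n\l : n\in\Z}$ is an infinite subset of $C\cap\L$, contradicting finiteness.

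To exclude $|\L(\G^C)|=2$, say $\L(\G^C)=\set{\s_1,\s_2}\subseteq C\cap\L$, I would use that such a group contains a loxodromic element $g$ with fixed points $\s_1$ and $\s_2$. But $\s_1$ is a rank-$1$ parabolic fixed point, so by definition $\G_{\s_1}$ is conjugate into the translation subgroup $N$ and therefore contains no loxodromic element; since $g\in\G^C\subseteq\G$ fixes $\s_1$, we have $g\in\G_{\s_1}$, contradicting that $g$ is loxodromic. With both forbidden cases eliminated, we conclude $\L(\G^C)=\emptyset$.
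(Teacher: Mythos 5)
Your proof is correct, and its skeleton matches the paper's: both confine $\L(\G^C)$ to the finite set $C\cap\L$, deduce that $\G^C$ is elementary, and treat the parabolic (one-limit-point) case identically, by letting the parabolic element translate a second point of $C\cap\L$ along $C$ to produce infinitely many intersection points. Where you genuinely diverge is the loxodromic (two-limit-point) case. The paper stays inside the combinatorics of the packing: it observes that, because $\L$ is a circle packing, $1<|C\cap\L|<\infty$ forces $|C\cap\L|\geq 3$, picks a third point $\s_3$ distinct from the two fixed points of the hyperbolic element $\g\in\G^C$, and derives the contradiction from the infinite orbit $\set{\g^n\s_3}\subseteq C\cap\L$. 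You instead import the structural fact that every point of $C\cap\L$ is a tangency point of two distinct components of $\Omega$, hence a rank-$1$ parabolic fixed point of $\G$, so that $\G_{\s_1}$ is conjugate into $N$ and cannot contain a loxodromic element at all. Both routes work, and yours is non-circular since the tangency analysis you cite occurs in Section~\ref{section: elementary orbits}, before this lemma. Your version gives a cleaner conceptual reason (no loxodromic of $\G$ fixes any point of $C\cap\L$) and does not need $|C\cap\L|\geq 3$; the paper's version is more self-contained, since it needs only the elementary three-point fact about packings, whereas the statement that tangency points are rank-$1$ parabolic fixed points is asserted rather than proved in the proof of Theorem~\ref{finite intersection implies closed} (it ultimately rests on geometric finiteness via the dichotomy between conical and bounded parabolic limit points, a tangency point being non-conical). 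If you keep your argument, you should note that you are invoking that assertion, not something the cited proof actually derives.
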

    
    \begin{proof}
    Observe first that $\L(\G^C) \subseteq \L\cap C$.
    In particular, the finiteness of the limit set of the group $\G^C$ implies that it is elementary (i.e. virtually abelian).
    Hence, each point of $\L(\G^C)$ is fixed by an element of $\G^C$.
    
		Suppose towards a contradiction that $\L(\G^C) \neq \emptyset$. Let $\s \in \L(\G^C)$. Choose coordinates in the upper half space model so that $\s = \infty$ and $C$ is the real axis. If $\s$ is a parabolic point, then $\G_\s \cap \G^C$ acts by horizontal translations. Note that because $\L$ is a circle packing, $|C\cap \L| \geq 3$. Thus, there exists a point $\s_2 \in C\cap \L$, such that $\s \neq \s_2$. And, hence, we get
        \[ (\G_\s\cap\G^C)\cdot \s_2 \subseteq C\cap \L. \]
        
        But, $(\G_\s\cap\G^C)\cdot \s_2$ is an infinite collection of points, contrary to the assumption that $|C\cap \L| < \infty$. Thus, $\s$ must be fixed by a hyperbolic isometry $\g \in \G^C$. Let $\s_2\in C\cap \L$ be the other fixed point of $\g$ and without loss of generality, assume $\s$ is the attracting fixed point.
        
        Choose coordinates so that $\s = \infty$, $\s_2 = 0$ and $C$ is the real axis in the upper half space model. In these coordinates, $\g$ acts on $C$ by $x\mapsto xt$, for all $x\in \R$ and some $t>0$. Since $|C\cap \L| \geq 3$, we can find $\s_3$ in $C\cap \L$, different from $\s$ and $\s_2$. But, then, we get that $\g^n \s_3 \in C\cap \L$ for all $n\in \Z$. This again contradicts the finiteness of $|C\cap \L|$. 
	\end{proof}
    
    
    \begin{lemma} \label{nonisolated parabolic pt}
		Let $C$ be such that $\G C$ is closed. Let $\s$ be a non-isolated point in $C\cap \L$ and assume $\s$ is a parabolic fixed point. Then, $\s \in \L(\G^C)$.
	\end{lemma}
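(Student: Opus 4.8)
The plan is to produce an infinite subgroup of $\G^C$ that fixes $\s$, from which $\s \in \L(\G^C)$ follows immediately. I would choose coordinates in the upper half-space model with $\s = \infty$; since $\s \in C$, the circle $C$ becomes a straight line, and I may assume $C$ is the real axis. The stabilizer $\G_\s$ then acts on $\C$ by Euclidean translations. Because $\G$ is geometrically finite, $\s$ is a \emph{bounded} parabolic fixed point, so $\G_\s \backslash (\L \setminus \set{\s})$ is compact; fix a compact set $Q \subset \C$ with $\L \setminus \set{\s} \subseteq \G_\s Q$. The hypothesis that $\s$ is non-isolated in $C \cap \L$ provides points $\xi_n \in (C \cap \L) \setminus \set{\s}$ with $\xi_n \to \s$, that is, real numbers with $|\xi_n| \to \infty$.

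For each $n$ I would pick $\tau_n \in \G_\s$ with $\tau_n \xi_n \in Q$. Since $\tau_n \xi_n$ stays bounded while $\xi_n \to \infty$, the translation vectors of the $\tau_n$ have norms tending to $\infty$; in particular the $\tau_n$ are eventually pairwise distinct. Now $\tau_n C$ is the translate of the real axis through the bounded point $\tau_n \xi_n$, hence a horizontal line whose heights are bounded; as circles in $\mathbb{S}^2$ these lie in a compact family, so after passing to a subsequence $\tau_n C$ converges in $\mc{C}$. But $\G C$ is closed, hence discrete (Remark~\ref{discrete iff closed}), so a convergent sequence inside it is eventually constant: along a subsequence $\tau_n C = C'$ is independent of $n$. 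Fixing one index $n_1$, the elements $\g_n := \tau_{n_1}^{-1}\tau_n$ then satisfy $\g_n C = C$, so $\g_n \in \G^C$, while also $\g_n \in \G_\s$; and since the $\tau_n$ have unbounded translation, the $\g_n$ are infinitely many distinct elements.

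Thus $\G_\s \cap \G^C$ is an infinite subgroup of the parabolic group $\G_\s$, so it contains elements of infinite (parabolic) order and its limit set is exactly $\set{\s}$. As $\G_\s \cap \G^C \subseteq \G^C$, this yields $\s \in \L(\G^C)$, as desired. The main obstacle is the second paragraph: converting the statement that $\s$ is non-isolated in $C \cap \L$ into the existence of genuinely infinitely many circle-stabilizing group elements. The two ingredients that make this work are bounded parabolicity (which lets one translate the accumulating points $\xi_n$ back into a fixed compact window) and discreteness of $\G C$ (which collapses the resulting translates of $C$ to a single circle). It is worth noting that the non-isolation hypothesis is essential and not merely convenient: were the parabolic direction of $\G_\s$ transverse to $C$, bounded parabolicity would force $C \cap \L$ to be bounded away from $\s$, making $\s$ isolated; so non-isolation precisely rules out the degenerate configuration in which no such sequence $\xi_n$ exists.
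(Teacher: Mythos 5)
Your proof is correct, and it takes a genuinely different route from the paper's. The paper argues by cases on the rank of $\s$: for a rank-$1$ cusp it uses the fact that $\L-\set{\s}$ lies in a strip parallel to the translation direction of $\G_\s$, so that $C$, meeting $\L$ in an unbounded set of reals, must be parallel to that strip and hence invariant under \emph{all} of $\G_\s$ --- notably without ever using closedness of $\G C$; for a rank-$2$ cusp it invokes the rational-slope dichotomy (Lemma~\ref{orbit under rk2 parabolic group} and Remark~\ref{remark: rk2 cusps and density}), where closedness of $\G C$ forces a $\s$-rational slope and hence an infinite cyclic subgroup of $\G_\s$ preserving $C$. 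Your argument is rank-independent: bounded parabolicity of $\s$ (valid for every parabolic limit point of a geometrically finite group, by Bowditch's characterization) lets you pull the accumulating points $\xi_n$ back into a compact window, the resulting translates $\tau_n C$ lie in a compact family of circles, and discreteness of the closed orbit (Remark~\ref{discrete iff closed}) collapses them to a single circle, producing infinitely many elements of $\G_\s \cap \G^C$. Your approach buys uniformity across the two ranks, avoids the rational-slope lemma entirely (which the paper only sketches by analogy with lines on a torus), and does not use the circle-packing hypothesis --- geometric finiteness plus closedness suffice. The paper's approach buys one refinement that yours does not recover: its rank-$1$ branch shows the closedness hypothesis is superfluous in that case (this is exactly the remark following the lemma in the paper), whereas you use discreteness of $\G C$ for both ranks. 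One small caveat: your closing aside, that a parabolic direction ``transverse'' to $C$ would force $\s$ to be isolated, is meaningful only for rank $1$; for a rank-$2$ cusp there is no single transverse direction, and the dichotomy you describe does not apply. This does not affect the proof itself.
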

    
    \begin{proof}
		Choose coordinates in the upper half space model so that $\s = \infty$ and $C$ is the real axis. Then, $\G_\s$ acts by translations on the complex plane. If $\s$ is a rank $1$ parabolic fixed point, then $\L-\set{\s}$ lies inside an infinite strip between two parallel straight lines $L_1, L_2$ and $\G_\s$ acts by translations in the direction parallel to these two lines.
        
        Since $\s$ is not isolated in $C\cap \L$, then $C \cap \L \subseteq \R$ contains sequence $\l_n \r \pm \infty$. Hence, $C$ meets the strip containing the limit set infinitely often and thus must be parallel to $L_1$ and $L_2$. Hence, $C$ is invariant under $\G_\s$. That is $\G_\s \subseteq \G^C$ and so $\s \in \L(\G^C)$.
        
        If $\s$ is a rank $2$ parabolic point, then by Lemma~\ref{orbit under rk2 parabolic group} and Remark~\ref{remark: rk2 cusps and density}, since $\G C$ is closed, $C$ must have a $\s$-rational slope. Moreover, by the proof of Lemma~\ref{orbit under rk2 parabolic group}, $C$ is invariant under an infinite cyclic subgroup of $\G_\s$. Thus, in particular, $\s$ is fixed by a non-trivial element of $\G^C$ and so $\s \in \L(\G^C)$.
	\end{proof}
    
    \begin{remark}
		Notice that the assumption that $\G C$ is closed in the statement of the above lemma was only used when $\s$ was a rank $2$ parabolic fixed point.
	\end{remark}
    

    \begin{lemma} \label{nonisolated radial point}
		Let $C$ be such that $\G C$ is closed. Let $\s$ be a non-isolated point in $C\cap \L$ and assume $\s$ is a radial limit point. Then, $\s \in \L(\G^C)$.
	\end{lemma}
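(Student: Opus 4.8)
The plan is to convert the radial recurrence of $\s$ in the ambient manifold $\G\backslash\H^3$ into genuine recurrence under the stabilizer $\G^C$, the decisive tool being that $\G C$ is not merely closed but discrete (Remark~\ref{discrete iff closed}). Write $P = \mathrm{hull}(C)$ for the geodesic plane in $\H^3$ bounded by $C$. Since $\s \in C = \partial P$, there is a geodesic ray $r:[0,\infty)\r \H^3$ contained in $P$ and terminating at $\s$. The definition of a radial point guarantees that \emph{any} geodesic ray terminating at $\s$ returns infinitely often to a bounded subset of $\G\backslash\H^3$; applying this to $r$, we may choose $\g_n \in \G$ and $t_n \r \infty$ so that $\g_n r(t_n)$ lies in a fixed compact set $K \subset \H^3$. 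After passing to a subsequence, $\g_n r(t_n) \r p$ for some $p \in K$.

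First I would manufacture elements of $\G^C$ out of this data. Since $r(t_n) \in P$, we have $\g_n r(t_n) \in \g_n P = \mathrm{hull}(\g_n C)$, so each $\mathrm{hull}(\g_n C)$ meets the fixed compact set $K$. By Proposition~\ref{circles meeting cpt set}, the family $\set{\g_n C}$ is contained in the compact set $\mc{C}(K)$ and hence subconverges to some circle $D$. Because $\G C$ is closed we have $D \in \G C$, and because $\G C$ is discrete the sequence is eventually constant along this subsequence. Thus, after relabelling, there is an index $N$ with $\g_n C = \g_N C$ for all large $n$, whence
\[ \eta_n := \g_N^{-1}\g_n \in \G^C. \]

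Finally I would read off the limit point. From $\g_n r(t_n) \r p$ and $\g_n = \g_N\eta_n$ we obtain $\eta_n r(t_n) \r q := \g_N^{-1}p \in \H^3$, and applying the isometry $\eta_n^{-1}$ gives
\[ d_{\H^3}\!\left(r(t_n),\, \eta_n^{-1}q\right) = d_{\H^3}\!\left(\eta_n r(t_n),\, q\right) \r 0. \]
Since $r(t_n) \r \s$, it follows that $\eta_n^{-1}q \r \s$ with each $\eta_n^{-1} \in \G^C$ and $q$ an interior point of $\H^3$. As interior points converging to the boundary point $\s$, the $\eta_n^{-1}q$ take infinitely many distinct values (in particular $\eta_n$ cannot stabilize, for otherwise $q$ would be the image of the boundary point $\s$ and hence lie on $\partial\H^3$). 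Therefore $\s$ is an accumulation point of the orbit $\G^C\cdot q$, i.e.\ $\s \in \L(\G^C)$, as desired.

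I expect the main obstacle to be exactly the passage from recurrence in $\G\backslash\H^3$ — which a priori supplies only arbitrary group elements $\g_n$ — to recurrence under the stabilizer $\G^C$. The device that forces this is to route the recurrent ray through $P$, so that the translates $\g_n C$ are trapped in the compact family $\mc{C}(K)$ of Proposition~\ref{circles meeting cpt set}, and then to invoke discreteness of $\G C$ to upgrade mere subconvergence into eventual equality of the circles. It is worth flagging that this argument appears to use only that $\s$ is radial and lies on $C$ together with closedness of $\G C$, and not directly the non-isolatedness hypothesis; reconciling this with the characterization in Theorem~\ref{limit set of a closed orbit} (where isolated points of $C\cap\L$ lie outside $\L(\G^C)$) suggests that a radial point on such a $C$ is automatically non-isolated in $C\cap\L$, a point I would verify to be sure the above configuration is not vacuous.
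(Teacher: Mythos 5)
Your proof is correct and follows essentially the same route as the paper's: both trap the translates of $\mathrm{hull}(C)$ along the radial recurrence of $\s$ in a compact set, invoke Proposition~\ref{circles meeting cpt set} together with discreteness of the closed orbit $\G C$ (Remark~\ref{discrete iff closed}) to force the group elements, up to a fixed factor, into $\G^C$, and then read off $\s \in \L(\G^C)$ from the resulting $\G^C$-orbit; the only cosmetic difference is that the paper pulls the ray back by $\g_n^{-1}$ to a ball around a fixed basepoint rather than pushing points of the ray forward into a compact set. Your closing observation that non-isolatedness is never used is also consistent with the paper, which makes the same point in the remark immediately following its proof.
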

    
    \begin{proof}
		Let $o \in \mathrm{hull}(C)$. Since $\s\in \L$, there exists a sequence $(\g_n)$ of elements in $\G$ such that $\g_n o \r \s$. Moreover, since $\s$ is a radial limit point, there exists a constant $r>0$, such that for all $n\geq 1$:
        \[ d_{\H^3}(\g_n o, l(o,\s)) = d_{\H^3}( o, \g_n^{-1} l(o,\s)) < r, \]
        where $l(o,\s)$ is the geodesic ray starting from $o$ and ending at $\s$. Note that $l(o,\s) \subset \mathrm{hull}(C)$.
        Thus, we get that for all $n \geq 1$,
        	\[  \mathrm{hull}(\g_n^{-1} C) \cap \overline{B(o, r)} \neq \emptyset.  \]
            
        But, if the collection of circles $\g_n^{-1}C$ is infinite, then, by Proposition~\ref{circles meeting cpt set}, the circles $\g_n^{-1}C$ must accumulate. But, this would contradict the discreteness of the orbit $\G C$. Hence, the collection of circles $\g_n^{-1}C$ must be finite.        
      Thus, after passing to a subsequence and without loss of generality, we may assume that for all $n\geq 1$,
      \[ \g_n \in \G^C. \]
      But, then $\g_n o \in \mathrm{hull}(C)$ for all $n$ and hence $\s \in \L(\G^C)$ as desired.
	\end{proof}
    
    \begin{remark} 
		If $\s \in C\cap \L$ is a radial limit point which is isolated in $C\cap \L$, then $C\cap \L = \set{\s}$ and $\G C$ is not closed (by Lemma~\ref{onepointclosed}). Hence, Lemma~\ref{nonisolated radial point} says that if $ |C\cap \L| > 1$ and $\G C$ is closed, then \textit{every} radial limit point in $C\cap \L$ belongs to $\L(\G^C)$.
	\end{remark}
    

    \begin{proof}[Proof of Theorem~\ref{limit set of a closed orbit}]
		First, we automatically have $\L(\G^C) \subseteq C\cap \L$. Assume that $|C\cap \L| =1$. Then, by Lemma~\ref{onepointclosed} (and its proof), $C$ meets $\L$ in a rank $1$ parabolic fixed point $\s$ and $C$ is invariant under $\G_\s$. Thus, $\G_\s \subseteq \G^C$ and hence $\s \in \L(\G^C)$.
        
        Next, assume that $1<|C\cap \L| <\infty $. Then, by Lemma~\ref{limit set of elementary orbits}, we have that $\L(\G^C) = \emptyset$ and the claim follows in this case.
        
        Now, assume $|C\cap \L| = \infty$ and that $\G C$ is closed. Define the following set
        \[ \mc{A}(C) = \set{\l\in C\cap\L: \exists \l_n \in C\cap \L, \l_n \neq \l, \l_n \r \l }. \]
        
        We need to show that $\L(\G^C) = \mc{A}(C)$. Note that $\L(\G^C) = 0,1,2,$ or $\infty$. First, assume that $\G^C$ is elementary, so that $|\L(\G^C)| = 0,1$, or $2$. Then, the same argument used in the proof of Lemma~\ref{limit set of elementary orbits} shows that in those cases, we always get that $\L(\G^C) \subseteq \mc{A}(C)$. On the other hand, if $\L(\G^C)$ is infinite, then it is known that $\L(\G^C)$ is a perfect closed set with no isolated points yielding $\L(\G^C) \subseteq \mc{A}(C)$ in this case as well.
        
        For the reverse inclusion, let $\s \in \mc{A}(C)$. Since $\G$ is geometrically finite, $\s$ is either a parabolic or a radial limit point. In either case, Lemmas~\ref{nonisolated parabolic pt} and~\ref{nonisolated radial point} give that $\s \in \L(\G^C)$ as desired.
        
        Moreover, the proof of Lemma~\ref{nonisolated parabolic pt} (resp. Lemma~\ref{nonisolated radial point}) show that if $\s \in \mc{A}(C)$ is parabolic (resp. radial) for $\G$, then $\s$ is also parabolic (resp. radial) for $\G^C$. Again, since $\G$ is geometrically finite,
        any point in $\mc{A}(C)$ is either radial or parabolic. Hence, $\L(\G^C)$ consists of parabolic and radial limit points which proves that $\G^C$ is a (non-elementary) geometrically finite\footnote{For fuchsian groups, every parabolic point is bounded.} Fuchsian group in this case. Hence, $\G^C$ is finitely generated.
	\end{proof}


\section{Isolation and Finiteness of Elementary Orbits}
\label{section: discreteness elementary}

	The purpose of this section is to prove a weak form of isolation for closed $H$-orbits with elementary stabilizers meeting $Core(M)$; Proposition~\ref{finiteness of orbits in each degree}.
    This result follows immediately from the following lemma.
	
	\begin{lemma} \label{discreteness of B_k}
		Let $k\in \N$ be such that $k \geq 3$ and let $\mc{B}_k$ denote the set of circles $C\subset \mathbb{S}^2$ such that $|C\cap \L| = k$. Then, $\mc{B}_k$ is discrete in the space of circles.
		Moreover, if a circle $C$ with $|C\cap \L|<\infty$ is an accumulation of circles in $\mc{B}_k$, then $|C\cap \L|<k$.
	\end{lemma}
    
    \begin{proof} 
    	
		Suppose that there is some $k \geq 3$ and $C \in \mc{B}_k$ such that there exists a sequence of circles $C_n \in \mc{B}_k$ converging to $C$.
        
        Let $B_1,\dots,B_k$ be the components of $\Omega$ meeting $C$. Let $\eta$ be the center of one of the disks in $\mathbb{S}^2$ bounded by $C$ and let $r$ be its radius. Let $\e >0$ be small enough so that the $2$ circles bounding the $2$ disks centered around $\eta$ and of radius $r-\e$ and $r+\e$ respectively meet $B_i$ for all $i$. This is possible because we only have finitely many components $B_i$.
        
        Next, let $\mc{N}_\e(C)$ be the $\e$-annulus around $C$ (Definition~\ref{annulus}). Then, by Proposition~\ref{annuli are open}, we have that for all $n\gg 1$,
        	\[C_n \in \mc{N}_\e(C).\]
        But, one has that any circle lying entirely inside the annulus $\mc{N}_\e(C)$ must meet $B_i$, for all $i$ by choice of $\e$. Moreover, any circle lying inside $\mc{N}_\e(C)$ and meeting exactly $k$ components of $\Omega$ must pass through all $k$ tangency points of these components. Since $k\geq 3$, there is one unique such circle which is $C$. Thus, $C_n$ must meet strictly more than $k$ components of $\Omega$ for all $n\gg 1$, which contradicts the fact that $C_n \in \mc{B}_k$ for all $n$. The same argument also implies the second assertion of the lemma.
	\end{proof}

\section*{Acknowledgements}
I would like to thank my advisor, Nimish Shah, for suggesting the problem and for his guidance, support and for numerous valuable discussions. I thank the anonymous referees for carefully reading this article and the suggestions that improved the exposition.
I would like to thank Yongquan Zhang for pointing out an error in Proposition~\ref{finiteness of orbits in each degree} in a previous version.

\newcommand{\etalchar}[1]{$^{#1}$}

\bigskip


\begin{thebibliography}{99}

\bibitem[Bow93]{Bowditch1993}
B. H. Bowditch.
\newblock Geometrical finiteness for hyperbolic groups.
\newblock {\em Journal of Functional Analysis}, 113(2):245--317, 1993.

\bibitem[BP92]{BenedettiPetronio}
R. Benedetti and C. Petronio.
\newblock {\em Lectures on hyperbolic geometry}.
\newblock Universitext. Springer-Verlag, Berlin, 1992.

\bibitem[Dal00]{Dalbo2000}
F. Dal'bo.
\newblock Topologie du feuilletage fortement stable.
\newblock {\em Annales de l'institut Fourier}, 50(3):981--993, 2000.

\bibitem[Ebe72]{Eberlein}
P. Eberlein.
\newblock Geodesic flows on negatively curved manifolds. {I}.
\newblock {\em Ann. of Math. (2)}, 95:492--510, 1972.

\bibitem[GLM{\etalchar{+}}03]{GrahamLagarias}
R.~L. Graham, J.~C. Lagarias, C.~L. Mallows, A.~R. Wilks, and
  C.~H. Yan.
\newblock Apollonian circle packings: number theory.
\newblock {\em J. Number Theory}, 100(1):1--45, 2003.

\bibitem[KMS93]{Keen93geometricfiniteness}
L. Keen, B. Maskit, and C. Series.
\newblock Geometric finiteness and uniqueness for kleinian groups with circle
  packing limit sets.
\newblock {\em J. Reine Angew. Math}, 1993.

\bibitem[Mar89]{MargulisOppenheim}
G.~A. Margulis.
\newblock Indefinite quadratic forms and unipotent flows on homogeneous spaces.
\newblock {\em Banach Center Publications}, 23:399--409, 1989.

\bibitem[MMO16a]{MMO-Planes}
C.~T. McMullen, A. Mohammadi, and H. Oh.
\newblock Geodesic planes in hyperbolic 3-manifolds.
\newblock {\em Inventiones mathematicae}, pages 1--37, 2016.

\bibitem[MMO16b]{MMO-horocycles}
C.~T McMullen, A. Mohammadi, and H. Oh.
\newblock Horocycles in hyperbolic 3-manifolds.
\newblock {\em Geometric and Functional Analysis}, 26(3):961--973, 2016.

\bibitem[MS17]{MaucourantSchapira}
F.~{Maucourant} and B.~{Schapira}.
\newblock {On topological and measurable dynamics of unipotent frame flows for
  hyperbolic manifolds}.
\newblock {\em ArXiv e-prints}, February 2017.

\bibitem[OS12]{OhShah}
H. Oh and N. Shah.
\newblock The asymptotic distribution of circles in the orbits of kleinian
  groups.
\newblock {\em Inventiones mathematicae}, 187(1):1--35, Jan 2012.

\bibitem[Rat91]{Ratner1991}
M. Ratner.
\newblock Raghunathan’s topological conjecture and distributions of unipotent
  flows.
\newblock {\em Duke Math. J.}, 63(1):235--280, 06 1991.

\bibitem[Sha91]{Shah}
N. Shah.
\newblock Closures of totally geodesic immersions in manifolds of constant
  negative curvature.
\newblock {\em Group theory from a geometrical viewpoint (Trieste, 1990), World
  Scientific}, pages 718--732, 1991.

\end{thebibliography}
\end{document}